\numberwithin{equation}{section}
\newtheorem{theorem}{Theorem}[section]
\newtheorem{lemma}[theorem]{Lemma}
\newtheorem{proposition}[theorem]{Proposition}
\newtheorem{corollary}[theorem]{Corollary}
\theoremstyle{definition}
\newtheorem{definition}[theorem]{Definition}
\newtheorem{example}[theorem]{Example}
\newcommand{\alg}[1]{{\textbf{\upshape #1}}}  %
\newcommand{\vv}[1]{\mathsf {#1}}
\newcommand{\cc}[1]{{\mathcal{#1}}}
\newcommand{\sse}{\subseteq}
\newcommand{\WH}{\vv W\vv H}
\newcommand{\MV}{\vv M\vv V}
\def\square{\RIfM@\bgroup\else$\bgroup\aftergroup$\fi
  \vcenter{\hrule\hbox{\vrule\@height.6em\kern.6em\vrule}\hrule}\egroup}
\newcommand{\smlcirc}{\raise3pt\hbox{\textrm{\circle{3.3}}}}
\newcommand{\one}{\textsf{\bf 1}}
\newcommand{\myfrac}[2]{\dfrac{#1}{\lower.5ex\hbox{$#2$}}}
\mathchardef\hu="0362
\newcommand{\freeW}{\mathcal{F}_{\mathsf{WH}}}
\newcommand{\freeMV}{\mathcal{F}_{\mathsf{MV}}}
\newcommand{\free}{\mathcal{F}}
\newcommand{\term}{\mathcal{T}}
\newcommand{\uno}{ {\bf 1}}
\newcommand{\zz}{ \mathbb{Z}}
\newcommand{\nn}{ \mathbb{N}}
\newcommand{\W}{ \mathcal{W}}
\newcommand{\M}{ \mathcal{M}}
\newcommand{\x}{ {\bf x}}
\newcommand{\luk}{\L ukasiewicz}
\newcommand{\rest}{\upharpoonright}
\newcommand{\Z}{\mathbb{Z}}
\newcommand{\scP}{\mathscr{P}}
\newcommand{\scQ}{\mathscr{Q}}
\newcommand{\lu}{\textbf{\L}^+}
\newcommand{\Fil}{{\rm Fil}}
\begin{document}
\title{The polyhedral geometry of Wajsberg hoops}
\author{Sara Ugolini}
\address{Artificial Intelligence Research Institute (IIIA), CSIC, Barcelona, Spain}
\email{sara@iiia.csic.es; saraugolini.phd@gmail.com}
\date{}
\maketitle
\begin{abstract}
We show that the category of finitely presented Wajsberg hoops with homomorphisms is dually equivalent to a particular subcategory of rational polyhedra with $\mathbb{Z}$-maps. We use the duality to provide a geometrical characterization of finitely generated projective and exact Wajsberg hoops. As applications, we study logical properties of the  positive fragment of \luk\ logic. We show that, while deducibility in the fragment is equivalent to deducibility among positive formulas in \luk\ logic, the same is not true for admissibility of rules. Moreover, we show that the unification type of Wajsberg hoops is nullary, while the exact unification type is unitary, therefore showing decidability of admissible rules in the fragment.
\end{abstract}
\section{Introduction}
Residuated structures arise naturally in the study of various algebraic systems, such as ideals of rings \cite{WD39} or lattice-ordered groups \cite{BCGJT}, and have acquired a particular relevance in algebraic logic. Most notably, they encompass the equivalent algebraic semantics in the sense of Blok-Pigozzi \cite{BP} of so-called \emph{substructural logics} \cite{GJKO}. The latter are a large class of logical systems including classical logic and several of the most interesting non-classical logics, such as intuitionistic logic, fuzzy logics, and relevance logics to name a few. Algebraizability \emph{\`a la} Blok-Pigozzi entails that the deducibility relation of an algebraizable logic can be characterized by means of the algebraic equational consequence of its equivalent algebraic semantics. Therefore, interesting properties of algebraizable logics can be studied via algebraic means (for details on algebraizability and a collection of general results on substructural logics see \cite{GJKO}). 

Hoops are a particular variety of residuated monoids related to logic which were defined in an unpublished manuscript by B\"uchi and Owens, inspired by the work of Bosbach on partially ordered monoids \cite{Bo69}.
 Hoops indeed correspond to commutative monoids endowed with a partial order $\leq$ definable by means of the residuum $\to$ of the monoid operation as $a \leq b$ if and only if $a \to b = 1$, and such an order is \emph{natural}, in the sense that $a \leq b$ if and only if there exists $c$ such that $a = b \cdot c$. The first systematic study of hoops has been carried out by Ferreirim in her PhD thesis \cite{Fe92} and in her works with Blok \cite{BF93,BlokFerr2000}, while in her work with Aglian\`o and Montagna \cite{AFM} they investigate the connection between hoops and fuzzy logics. 

Wajsberg hoops play a special role in this framework. From the algebraic point of view, they can be used to describe subdirectly irreducible hoops, and the whole variety of hoops can be obtained as the join of iterated powers of the variety of Wajsberg hoops, in the sense defined in \cite{BlokFerr2000}.
Wajsberg hoops also have a peculiar connection with lattice-ordered abelian groups (abelian $\ell$-groups for short). In fact, the variety $\WH$ of Wajsberg hoops is generated by its totally ordered members, that are, in loose terms, either negative cones of abelian $\ell$-groups, or \emph{intervals} of abelian $\ell$-groups \cite{AglianoPanti1999}. 

 Besides their purely algebraic interest, Wajsberg hoops are relevant structures in the realm of algebraic logic. Indeed, they are term-equivalent to the $0$-free subreducts of MV-algebras, the equivalent algebraic semantics of the infinite-valued \luk\ logic, seen as residuated lattices. Therefore, $\WH$ is the equivalent algebraic semantics of the positive (i.e., $0$-free) fragment of the infinite-valued \luk\ logic \textbf{\L}, the latter seen as an extension of the Full Lambek Calculus with exchange \textbf{FL$_e$}, (thus in the language with constants $0, 1$ and binary connectives $\land, \lor, \cdot, \to$, see \cite{GJKO}).
 
  We could actually rephrase the representation of totally ordered Wajsberg hoops in terms possibly more familiar to the reader expert in many-valued logics. Namely, totally ordered Wajsberg hoops are either cancellative hoops, i.e. hoops where the monoidal operation satisfies the cancellativity law, or MV-algebras, whose representation in terms of intervals of abelian $\ell$-groups with strong unit actually lifts to a categorical equivalence via the well-known Mundici's functor \cite{M86}.

In the context of the algebraic semantics of many-valued logics, the relevance of Wajsberg hoops is also related to the study of the equivalent algebraic semantics of H\'ajek Basic Logic and its positive subreducts (BL-algebras and basic hoops). Given the well-known decomposition result in terms of Wajsberg hoops for totally ordered BL-algebras given by Aglian\`o and Montagna \cite{AglianoMontagna2003}, and the representation of free BL-algebras in \cite{AB10} building on this result, the understanding of Wajsberg hoops is  key to obtain interesting results in this framework.

The categorical approach in studying varieties connected to Wajsberg hoops and MV-algebras has shown to be fruitful in finding interesting connections with categories of $\ell$-groups \cite{ACV,DL94,GT05,M86}. Moreover, both abelian $\ell$-groups and abelian $\ell$-groups with strong order unit (equivalently, MV-algebras), enjoy a deep connection with geometrical objects. In particular, finitely presented structures are dually equivalent, respectively, to categories whose objects are closed polyhedral cones \cite{Be77} and rational polyhedra \cite{MarraSpada13}. In the case of MV-algebras, the geometrical approach has allowed the investigation of logical properties of \luk\ logic (see for instance \cite{Mundici11,MarraSpada13}).

In this contribution we will show that finitely presented Wajsberg hoops have an interesting geometrical dual as well, in particular, with what we will call {\em pointed} rational polyhedra. More precisely, we show how finitely presented Wajsberg hoops are dually equivalent to a (non-full) subcategory of rational polyhedra with $\mathbb{Z}$-maps, given by rational polyhedra in unit cubes $[0,1]^{n}$ that contain the lattice point ${\bf 1}= (1, \ldots, 1)$, and {\em pointed} $\mathbb{Z}$-maps, that are $\mathbb{Z}$-maps that respect the lattice point ${\bf 1}$. In particular, we obtain this result by first demonstrating that Wajsberg hoops are equivalent to a subcategory of finitely presented MV-algebras, that we shall call in what follows \emph{positive}, and then suitably restricting the Marra-Spada duality \cite{MarraSpada13}.

The first geometrical approach to the study of Wajsberg hoops is in \cite{AglianoPanti1999}, which contains a key step to understand the geometrical duality, that is, the description of the free $n$-generated Wajsberg hoop as a particular subreduct of the free MV-algebra over the same number of generators. We show how this result can be seen as a consequence of both general properties of varieties of positive subreducts and specific  characteristics of MV-algebras.

The connection with the MV-algebraic duality with rational polyhedra will allow us to use the deep results obtained by Cabrer and Mundici about finitely generated projective MV-algebras \cite{CM09,CM12,Ca15} to describe finitely generated projective Wajsberg hoops. In particular, we show that no MV-algebra (or more precisely, its $0$-free reduct) is projective in the variety of Wajsberg hoops, and actually that finitely generated (nontrivial) projective Wajsberg hoops are necessarily unbounded. Interestingly enough, this implies that, in particular, the ($0$-free reduct of the) two-element Boolean algebra $\alg 2$ is not projective in the variety of residuated lattices, while $\alg 2$ is projective in every variety of bounded commutative integral residuated lattices, and in the variety of all bounded commutative integral residuated lattices it is the only finite projective algebra \cite{AU21}. 

The fact that Wajsberg hoops are the equivalent algebraic semantics of the positive fragment of \luk\ logic will allow us to use our algebraic and geometric investigation to derive some analogies and differences between \luk\ logic and its positive fragment, that we shall refer to as $\lu$. In particular, we show that while deducibility in $\lu$ coincides with deducibility of positive terms in \luk\ logic, the same is not true for admissibility of rules. 

Moreover, via the algebraic approach to unification problems developed by Ghilardi \cite{Gh97}, we will show that the unification type of the variety of Wajsberg hoops, and thus of the positive fragment of \L ukasiewicz logic, is nullary. This is in close analogy with the case of MV-algebras, and indeed our proof adapts to pointed rational polyhedra the pathological example given in \cite{MarraSpada13} for the case of \L ukasiewicz logic. 
Moreover, via the algebraic approach to admissibility developed in \cite{CMe15}, we show that while the exact unification type of \luk\ logic is finitary, the one of its positive fragment is unitary. This in particular implies decidability of the admissibility of rules in Wajsberg hoops and the positive fragment of \luk\ logic.

\section{Preliminaries}\label{sec:prel}
We start by recalling the needed facts about hoops, for more details and further references the reader can consult \cite{Fe92,BlokFerr2000,AFM}.

A \emph{hoop} is an algebra $\alg A = (A, \cdot, \to, 1)$ such that $(A, \cdot, 1)$ is a commutative monoid and:
\begin{enumerate}
	\item[(H1)] $x \to x = 1$,
	\item[(H2)] $x \cdot (x \to y) = y \cdot (y \to x)$,
	\item[(H3)] $(x \cdot y) \to z = x \to (y \to z)$.
\end{enumerate}
A \emph{Wajsberg hoop} is a hoop satisfying the following identity:
$$(x \to y) \to y = (y \to x) \to x \quad (T)$$
often called Tanaka's equation thanks to Tanaka's work in this framework \cite{Ta75}, although this particular identity seems to appear later in \cite{Yu77}.
Every hoop carries a natural order structure definable as:
$$x \leq y \mbox{ iff } x \to y = 1$$
that is in general a meet-semilattice order, where $\land$ is definable as:
$$x \land y = x \cdot (x \to y).$$
In the case of Wajsberg hoops this order is actually a lattice order, where the join operation is definable as:
$$x \lor y = (x \to y) \to y.$$
By Tanaka's identity, the defined join operation is commutative, and in fact in the literature on BCK-algebras (T) is also called the \emph{commutativity law}.

Moreover, $(\cdot, \to)$ form a \emph{residuated pair} with respect to this lattice order, that is to say:
$$x \cdot y \leq z \mbox{ iff } y \leq x \to z.$$
In fact, Wajsberg hoops can be seen as particular commutative residuated lattices. A \emph{commutative residuated lattice} is an algebra $\alg A = (A, \cdot, \to, \land, \lor, 1)$ where 
\begin{enumerate}
	\item $(A, \cdot, 1)$ is a commutative monoid
	\item $(A, \land, \lor)$ is a lattice
	\item $(\cdot, \to)$ form a residuated pair.
 \end{enumerate}
We call a commutative residuated lattice \emph{integral} if $1$ is the top element in the lattice order, and \emph{bounded} if it is integral and there is also an extra element $0$ that is the bottom element of the lattice. 
Commutative residuated lattices are a variety called $\mathsf{CRL}$, and their  version with an extra constant $0$ (not necessarily being the lower bound) is a variety called $\mathsf{FL}_e$. The last name comes from logic, as $\mathsf{FL}_e$-algebras are the equivalent algebraic semantics of the Full Lambek calculus with exchange (for more details on residuated structures and substructural logics see \cite{GJKO}). The subvariety of $\mathsf{FL}_e$ consisting of bounded residuated lattices is called $\mathsf{FL}_{ew}$ (where $w$ stands for \emph{weakening} in the associated logical system).

With respect to their structure theory, all varieties of residuated lattices are \emph{ideal-determined} in the sense of \cite{AU92}, and the role of ideals is played by \emph{congruence filters}, that is, (non-empty) lattice filters closed under products. 
In particular, there is a lattice-isomorphism between the lattice of congruence filters and the congruence lattice. Given a commutative integral residuated lattice $\alg A$, the isomorphism is given by the following two maps, where $\theta$ is any congruence of $\alg A$ and $F$ is any congruence filter of $\alg A$:
$$\theta \mapsto F_\theta = \{a \in A: (a, 1) \in \theta\}, \qquad F \mapsto \theta_F =\{(a, b): a \to b, b \to a \in F\}.$$
 Finitely generated (or, equivalently, compact) congruences are then in bijection with finitely generated congruence filters. 
 Now, given $\alg A$ a commutative integral residuated lattice, and $X \sse A$,  the congruence filter generated by $X$ in $\alg A$ is the set $$\Fil_{\alg A}(X)=\{b \in A: a_1 \cdot \ldots \cdot a_n \leq b,\, a_1, \ldots, a_n \in X,\, n \in \mathbb{N}\}.$$
It then follows that finitely generated congruence filters correspond to principal congruence filters (it suffices to take the meet of the generators), and analogously finitely generated congruences are principally generated.

Wajsberg hoops fit in the residuated lattice framework since they are term equivalent to the variety $\WH$ of commutative integral residuated lattices satisfying (T), and the divisibility condition:
$$x \land y = x \cdot (x \to y) \quad (div)$$
which ensure that the order is natural. 
From now on, we shall see a Wajsberg hoop as an algebra in $\WH$, with the signature of residuated lattices $\{\cdot, \to, \land, \lor, 1\}$.

Bounded Wajsberg hoops are term-equivalent to MV-algebras, the equivalent algebraic semantics of infinite-valued \luk\ logic. 
MV-algebras are usually introduced (see \cite{CDM}) in the signature $\{\oplus, \neg, 0\}$, and can be defined as algebras $\alg A = (A, \oplus, \neg, 0)$ such that 
\begin{enumerate}
	\item $\neg\neg x = x$,
	\item $x \oplus (\neg 0) = \neg 0$,
	\item $\neg(\neg x \oplus y) \oplus y = \neg(\neg y \oplus x ) \oplus x$.
\end{enumerate} 
As mentioned in the introduction, both Wajsberg hoops and MV-algebras are closely related to lattice-ordered abelian groups, that we shall call \emph{abelian $\ell$-groups} for short. An abelian $\ell$-group is an algebra $\alg G = (G, +, -, 0, \land, \lor )$ where $(G, +, -, 0)$ is an abelian group, $(G, \land, \lor)$ is a lattice, and the group operation distributes over the lattice operations. 

Let $\alg G$ be an abelian $\ell$-group with a \emph{strong unit} $u$, i.e., an element $u$ such that for all $x \in G$, there exists $n \in \mathbb{N}$: $x \leq u + \ldots + u$ ($n$ times). Then Mundici's $\Gamma$ functor constructs an MV-algebra $\Gamma (G, u) = ([0,u], \oplus_u, \neg_u,0)$ defined by:
$[0,u]=\{x \in G: 0\leq x \leq u\}, a \oplus_u b = (a + b) \land u, \neg_u\, a = u - a$. Every MV-algebra can be obtained in this way \cite{M86}. 

Both MV-algebras and abelian $\ell$-groups can be seen in the framework of residuated lattices. The latter are indeed term-equivalent to commutative residuated lattices that satisfy $x\cdot (x\to 1) = 1$, via the translation $x \cdot y := x + y$, $1:= 0$, and $x \to y := -x + y$.

While MV-algebras, as defined above, are term equivalent to the subvariety $\MV$ of $\mathsf{FL}_{ew}$ axiomatized by $(x \to y) \to y = x \lor y$, via the stipulations: $$1 := \neg 0,\,0 := 0,\, x \cdot y := \neg(\neg x \oplus \neg y),\, x \to y := \neg x \oplus y,\, x \lor y = (x \to y) \to y,\, x \land y = x \cdot (x\to y)$$ and $$0 := 0,\; \neg x = x \to 0,\;\;x \oplus y = \neg(\neg x \cdot \neg y).$$ Thus, as it often happens in the literature, in what follows we refer to MV-algebras as algebras in $\MV$, in the signature $\{\cdot, \to, \land, \lor, 0, 1\}$.

Given any abelian $\ell$-group $\alg G$ seen as a residuated lattice, we define its \emph{negative cone} to be the algebra $\alg G^- = (G^-, \cdot, \to_1, \land, \lor, 1)$ on the subset of negative elements $G^- = \{x \in G: x \leq 1\}$ with $x \to_1 y = (x \to y) \land 1$. Negative cones of abelian $\ell$-groups are a variety of residuated lattices \cite{BCGJT}, term equivalent to \emph{cancellative hoops}, that is, hoops where the product satisfies the usual cancellativity law. Cancellative hoops are a subvariety of Wajsberg hoops, and moreover, a totally ordered Wajsberg hoop is either bounded (thus, equivalent to an MV-algebra) or cancellative \cite[Proposition 2.1]{AglianoPanti1999}.

Let us consider the varieties $\WH$ and $\MV$, respectively the varieties of Wajsberg hoops and MV-algebras. Both varieties are generated by an algebra defined over the real unit interval $[0,1]$, called \emph{standard}. More precisely, $[0,1]_{\MV} = ([0,1], \cdot_{\L}, \to_{\L}, \min, \max, 0, 1)$, with $\cdot_{\L}$ and $\to_{\L}$ being respectively \luk\ t-norm and residuum: $$x \cdot_{\L} y = \max\{0,x+y-1\}, \;x \to_{\L} y = \min\{1,y-x +1\}.$$ 
The standard Wajsberg hoop $[0,1]_{\WH}$ is the $0$-free reduct of $[0,1]_{\MV}$.

Since the algebras on $[0,1]$ are generic for $\MV$ and $\WH$, i.e. they generate the two varieties, by a standard universal algebraic argument (see \cite[Lemma 4.98]{MMTbook}) the $n$-generated free algebras are algebras of functions from $[0,1]^{n}$ to $[0,1]$ generated by the projections and with operations defined componentwise by the ones on the standard algebra.
We point out that the understanding of finitely generated free algebras in this context is meaningful from the logical point of view, since the $n$-generated free algebra is isomorphic to the Lindenbaum algebra of $n$-variable formulas of the corresponding logic.
 
By the well-known McNaughton's Theorem, the free $n$-generated MV-algebra is shown to be (isomorphic to) the algebra of McNaughton functions, that is, piecewise linear continuous functions with integer coefficients \cite{Mc51,M94}. Thus, to each McNaughton function corresponds an equivalence class of MV-terms in the appropriate free algebra (equivalently, logically equivalent \luk\ formulas). From now on, we will see the free $n$-generated MV-algebra as its representative $\freeMV(n)$ given by the McNaughton functions over $[0,1]^n$.

In \cite{AglianoPanti1999} the authors show that the free Wajsberg hoop over $n$ generators $\freeW(n)$ can be characterized in terms of the free MV-algebra $\freeMV(n)$ in the following way. Let $\uno$ be the point in the $n$-cube whose coordinates are all $1$, then:
\begin{equation}\label{eq:freeWH}\freeW(n) = \{f \in \freeMV(n) : f(\uno) = 1\}.\end{equation}
In what follows we will refer to the McNaughton functions in $\freeW(n)$ as \emph{Wajsberg functions}.

A key character of the rest of this paper is going to be played by finitely presented Wajsberg hoops. 
A \emph{finitely presented algebra} is an algebra with a finite number of generators, satisfying finitely many identities. In a variety $\vv V$, a finitely presented algebra is then (isomorphic to) a
quotient of a finitely generated free algebra modulo a finitely generated congruence. 
In what follows, we will denote with $\free_{\vv V}(n)/f$ a finitely presented algebra that is the quotient of $\free_{\vv V}(n)$ with respect to a principal congruence filter generated by an element $f \in \free_{\vv V}(n)$.
The core of the present manuscript is a representation of finitely presented Wajsberg hoops in terms of geometrical objects. This connection stems from the following observation, that allows to connect principal filters with polyhedra in unit cubes. 
Given any function $f \in \freeMV(n)$, let $O_{f}$ be its one-set, i.e. 
\begin{equation}\label{eq:oneset} O_{f} = \{ \x \in [0,1]^{n}: f(\x) = 1\}. \end{equation}
Similarly, given an MV-term $t$, we will write $O_t$ for the one-set of its associated McNaughton function.
Then a particular instance of Lemma 4.1 in \cite{AglianoPanti1999} yields the following.
\begin{lemma}\label{lemma:fil-oneset}
Let $f, g \in \freeW(n)$. Then $g \in \Fil_{\freeW(n)}(f)$ if and only if $O_{f} \sse O_{g}$.
\end{lemma}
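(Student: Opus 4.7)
My plan is to reduce the assertion to an elementary fact about $k$-fold Łukasiewicz products of McNaughton functions, using the filter description recalled in Section~2. Since $\freeW(n)$ is a commutative integral residuated lattice, that description gives
$$\Fil_{\freeW(n)}(f) = \{g \in \freeW(n) : f^{k} \leq g \text{ for some } k \in \mathbb{N}\},$$
where order and product are computed pointwise on the cube $[0,1]^n$ via the Łukasiewicz operations. The equivalence to prove therefore becomes: there exists $k \in \mathbb{N}$ with $f^{k}(\x) \leq g(\x)$ for every $\x \in [0,1]^n$ if and only if $O_f \sse O_g$.

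The forward direction is immediate: if $f^{k} \leq g$ and $\x \in O_f$, then $f(\x)=1$, so $f^{k}(\x)=1$, which forces $g(\x)=1$ since $g \leq 1$ on the cube, and hence $\x \in O_g$. For the converse, I would rewrite the target inequality using the closed form of the Łukasiewicz $k$-fold product $f^{k}(\x) = \max\{0, 1 - k(1-f(\x))\}$, which is equivalent to $1 - g(\x) \leq k\,(1-f(\x))$ for every $\x \in [0,1]^n$. The task is thus to uniformly bound the ratio $(1-g)/(1-f)$ on the cube by an integer $k$, using only the inclusion of zero sets $(1-f)^{-1}(0) \sse (1-g)^{-1}(0)$, which is precisely the hypothesis $O_f \sse O_g$. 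I would fix a common rational triangulation $\Sigma$ of $[0,1]^n$ on which both $1-f$ and $1-g$ are affine on every simplex. On each $\sigma \in \Sigma$, the zero set of the nonnegative affine function $(1-f)\rest\sigma$ is the face spanned by the vertices where $1-f$ vanishes, and by the hypothesis $1-g$ vanishes on at least that face; writing the two affine functions on $\sigma$ in terms of barycentric coordinates, a standard convex-combination estimate bounds $(1-g)/(1-f)$ on $\sigma$ by $\max\{(1-g)(v)/(1-f)(v) : v \text{ a vertex of } \sigma,\ (1-f)(v) > 0\}$. Taking $k$ to be any integer upper bound over the finitely many simplices of $\Sigma$ yields $f^{k} \leq g$ and hence $g \in \Fil_{\freeW(n)}(f)$.

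The main obstacle, and really the only nontrivial point, is exactly this uniform bound for the backward direction, where the piecewise-linear structure of $f$ and $g$ (inherited from their being McNaughton functions in $\freeMV(n)$) is essential; without it, the existence of a finite $k$ can fail. As the authors note, this is precisely the content of Lemma~4.1 of \cite{AglianoPanti1999}, of which the present statement is the particular case where $f,g$ lie in the Wajsberg subreduct $\freeW(n)$.
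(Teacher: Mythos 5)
Your proposal is correct and complete. The reduction of $g \in \Fil_{\freeW(n)}(f)$ to the existence of $k$ with $f^{k}\leq g$ is exactly what the filter description recalled in Section~2 gives; the forward direction is as immediate as you say; and the backward direction is handled correctly: the closed form $f^{k}=\max\{0,1-k(1-f)\}$ turns the problem into bounding $(1-g)/(1-f)$ uniformly, and your convex-combination estimate on a common triangulation is sound, since on each simplex a nonnegative affine function vanishes exactly on the face spanned by the vertices where it vanishes, so the hypothesis $O_f\sse O_g$ forces $(1-g)(v)=0$ at every vertex $v$ with $(1-f)(v)=0$ and hence $(1-g)\leq M_\sigma(1-f)$ on that simplex. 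The only difference from the paper is that the paper offers no argument at all here: it disposes of the lemma by citing Lemma~4.1 of Agli\`ano--Panti, whereas you reconstruct the underlying proof, which is the standard one for characterizing principal filters of piecewise-linear function algebras by inclusion of one-sets. Nothing to fix; at most you could note explicitly that a common rational triangulation linearizing both $f$ and $g$ exists because they are McNaughton functions, but this is a routine fact.
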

We shall see in what follows that one-sets of Wajsberg functions correspond to particular \emph{rational polyhedra}. By a  {\em (rational) convex polyhedron} (or {\em (rational) polytope}) of $\mathbb{R}^k$ we mean the convex hull of finitely many points of $\mathbb{R}^k$ ($\mathbb{Q}^k$ respectively), and a (rational) polyhedron is  a finite union of (rational) polytopes.

\section{Positive reducts}
As it follows from (\ref{eq:freeWH}), the free Wajsberg hoop on $n$ generators $\freeW(n)$ is (isomorphic to) a particular subreduct of the free MV-algebra over the same number of generators. We shall see how this arises from both general considerations and properties more specific to the MV-algebraic-theory. 

Let us first consider a variety $\vv V$ of $\mathsf{FL}_e$-algebras and the variety $\vv V_0$ of their $0$-free subreducts. This last sentence makes sense because of the following (folklore) fact, which we prove here for the sake of the reader, since we did not find an appropriate reference. 
\begin{proposition}
Let $\vv V$ be a variety of $\mathsf{FL}_e$-algebras, then the class $\vv V_0$ of its $0$-free subreducts is a variety of commutative residuated lattices.
\end{proposition}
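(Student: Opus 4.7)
The proof should proceed via Birkhoff's HSP theorem: since $\vv V_0$ is a class of commutative residuated lattices, showing that it is closed under $\HH$, $\SU$, and $\PP$ will suffice. I would handle the three closure properties in increasing order of difficulty.

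Closure under $\SU$ is immediate: a $0$-free subalgebra of a $0$-free subreduct of some $\alg A \in \vv V$ is itself a $0$-free subreduct of $\alg A$. Closure under $\PP$ is essentially bookkeeping: if each $\alg B_i \sse \alg A_i^-$ with $\alg A_i \in \vv V$, then $\prod_i \alg B_i \sse \prod_i \alg A_i^- = (\prod_i \alg A_i)^-$, and $\prod_i \alg A_i \in \vv V$ because $\vv V$ is already a variety. So the real content lies in closure under $\HH$.

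For $\HH$, suppose $\alg B \sse \alg A^-$ with $\alg A \in \vv V$, and let $h\colon \alg B \twoheadrightarrow \alg C$ be a surjective $0$-free homomorphism. The plan is to lift the kernel congruence $\theta = \ker h$ from $\alg B$ to a congruence $\Theta$ on $\alg A$ using the congruence–filter correspondence recalled in the preliminaries, which is available because every variety of residuated lattices is ideal-determined. Concretely, $\theta$ corresponds to a congruence filter $F$ of $\alg B$; viewing $F$ as a subset of $A$, let $F^{*}$ be the congruence filter it generates in $\alg A$ and let $\Theta$ be the corresponding congruence on $\alg A$. Then $\alg A/\Theta \in \vv V$ since $\vv V$ is closed under $\HH$, and the composite $\alg B \hookrightarrow \alg A \twoheadrightarrow \alg A/\Theta$ factors through $\alg C$, producing a homomorphism $\alg C \to (\alg A/\Theta)^{-}$.

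The step I expect to be the main obstacle is showing this last map is injective, which reduces to the equality $F^{*} \cap B = F$. The intuition is that the operations used to generate $F^{*}$ from $F$ are $0$-free and $\alg B$ is closed under them, so any element of $B$ that enters $F^{*}$ can already be produced inside $\alg B$ from elements of $F$. In the integral setting, the explicit description $\Fil_{\alg A}(F) = \{a \in A : f_1 \cdots f_n \leq a,\ f_i \in F\}$ given in the preliminaries makes this transparent: if $b \in B$ and $f_1 \cdots f_n \leq b$ in $\alg A$ with $f_i \in F$, then $f_1 \cdots f_n$ lies in $B$ and the lattice order on $\alg B$ is the restriction of that of $\alg A$, so $b \in \Fil_{\alg B}(F) = F$. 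For general $\mathsf{FL}_e$ the description of congruence filters is slightly more elaborate, but the same subalgebra-closure principle applies. Once $F^{*}\cap B = F$ is secured, $\alg C$ embeds as a $0$-free subreduct of $\alg A/\Theta \in \vv V$, and the proof is complete.
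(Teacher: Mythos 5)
Your proof is correct and follows essentially the same route as the paper: verify closure under $\HH$, $\SU$, and $\PP$, with all the real work in the $\HH$ case, which both arguments settle by extending the kernel congruence from the subreduct to the ambient algebra and then embedding the quotient into the quotient of that algebra. The only difference is that the paper invokes the congruence extension property of commutative residuated lattices as a cited fact, whereas you re-derive it in this situation through the filter-generation description (your identity $F^{*}\cap B=F$); both are sound.
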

\begin{proof}
	We show that $\vv V_0$ is closed under homomorphic images, subalgebras, and direct products. 
	
	Suppose first that $\alg S \in \vv V_0$ is a subreduct of some $\alg A \in \vv V$, and that $\alg R = h(\alg S)$ for a homomorphism $h$ of residuated lattices. Let us call $\alg A_0$ the $0$-free reduct of $\alg A$, then $\alg A_0 \in \vv V_0$ and $\alg S$ is a subalgebra of $\alg A_0$, in symbols $\alg S \leq \alg A_0$. Since commutative residuated lattices have the congruence extension property (\cite[Lemma 3.57]{GJKO}, \cite{GalPhd}), if we set $\theta = \ker(h)$, there exists a congruence $\theta_0$ of $\alg A_0$ such that $\theta_0 \cap (S \times S) = \theta$. Then it follows from the Third Isomorphism Theorem (\cite[Theorem 6.18]{BS}) that $\alg S/\theta \leq \alg A_0 /\theta_0 $. Finally, since $\alg A_0$ is bounded so is $\alg A_0/\theta_0$, which then is the $0$-free reduct of a quotient of $\alg A$. Hence, $\alg R = h(\alg S)$ is a subreduct of an algebra in $\vv V$, and $\vv V_0$ is closed under homomorphic images.
	
	It is easy to see that $\vv V_0$ is closed under subalgebras. Indeed, if $\alg R \leq \alg S$ and $\alg S$ is a residuated lattice that is the subreduct of some $\alg A \in \vv V$, it is clear that $\alg R$ is a subreduct of $\alg A$ as well.
	
	Lastly, we show that $\vv V_0$ is closed under direct products. Let $\{\alg S_{i}\}_{i \in I}$ be a family of subreducts of algebras in $\vv V$, more precisely say that $\alg S_i \leq \alg A_{i,0}$, where each $\alg A_{i, 0}$ is the $0$-free reduct of some $\alg A_i \in \vv V$. Then, denoting with $P$ and $S$ the algebraic operators denoting the class of direct products and subalgebras respectively,
	 $$\prod_{i \in I} \alg S_i \in PS(\{\alg A_{i,0}\}_{i \in I})\subseteq SP(\{\alg A_{i,0}\}_{i \in I}).$$
	Thus, it follows that $\prod_{i \in I} \alg S_i$ is a subalgebra of the $0$-free reduct of $\prod_{i \in I} \alg A_i$, completing our proof.
\end{proof}
It is worth pointing out that the previous proof works in the very same way for not necessarily commutative varieties of bounded residuated lattices that do have the congruence extension property. Notice also that, in general, the class of $0$-free subreducts of a variety of $\mathsf{FL}$-algebras is a quasivariety. In fact, the closure under subalgebras and direct products requires no special properties, and it can be shown that the subreducts are closed under ultraproducts. The fact that the class of subreducts of a quasivariety is a quasivariety has been shown to hold in general by Mal'cev \cite{Mal71}.

We will write $\mathbf{T}_{\vv V_{0}}(X)$ for the term algebra over $X$ in the language of $\vv V_{0}$, and $\mathbf{T}_{\vv V}(X)$ for the term algebra over the same set of generators $X$ in the language of $\vv V$. Given any variety $\vv W$ and any set of variables $X = \{x_1, \ldots, x_n\}$, an \emph{assignment} of $X$ into an algebra $\alg A \in \vv W$ is a function $h$ mapping each variable $x_i$ to an element of $\alg A$, say $h(x_i) = a_i \in A$, for $i = 1, \ldots, n$. Then $h$ extends to a homomorphism (that we will also call $h$) from the term algebra $\mathbf{T}_{\vv W}(X)$ to $\alg A$. Moreover, given terms $t(x_1, \ldots, x_n), u(x_1, \ldots, x_n)$ over the language of $\vv W$, we write $\alg A \models t \approx u$ if for any assignment $h$ of the variables $x_1, \ldots, x_n$ to elements of $\alg A$, $h(t(x_1, \ldots, x_n)) = h(u(x_1, \ldots, x_n))$ in $\alg A$. We write $\vv W \models t \approx u$ if $\alg A \models t \approx u$ for all $\alg A \in \vv W$.
\begin{proposition}\label{prop:freereduct}
Let $\vv V$ be a variety of $\mathsf{FL}_{ew}$-algebras and $\vv V_{0}$ be the variety of its $0$-free subreducts. Let $X$ be any set, then $\free_{\vv V_{0}}(X)$ is isomorphic to the subalgebra of the $0$-free reduct of $\free_{\vv V}(X)$ generated by $X$.
\end{proposition}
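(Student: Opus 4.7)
The plan is to verify that $\alg S$, the subalgebra of the $0$-free reduct of $\free_{\vv V}(X)$ generated by $X$, satisfies the universal property characterizing $\free_{\vv V_{0}}(X)$. Since the preceding proposition establishes that $\vv V_0$ is a variety, $\free_{\vv V_0}(X)$ exists, and a universal-property argument will yield a unique isomorphism $\free_{\vv V_0}(X) \to \alg S$ sending the free generators to $X$.

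First I would note that $\alg S \in \vv V_0$: as a subalgebra of the $0$-free reduct of $\free_{\vv V}(X) \in \vv V$, it is by definition a $0$-free subreduct of an algebra in $\vv V$. Next, given any $\alg B \in \vv V_0$ and any assignment $f : X \to B$, I would construct the required extension. By definition of $\vv V_0$, $\alg B$ is a subreduct of some $\alg A \in \vv V$. The universal property of $\free_{\vv V}(X)$ applied to $f : X \to B \subseteq A$ yields a unique homomorphism $\tilde f : \free_{\vv V}(X) \to \alg A$ of $\mathsf{FL}_e$-algebras extending $f$. Forgetting the constant $0$ and restricting the domain to $\alg S$ produces a residuated lattice homomorphism $\hat f : \alg S \to \alg A_0$, where $\alg A_0$ denotes the $0$-free reduct of $\alg A$.

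The crucial observation is that the image $\hat f(\alg S)$ is contained in the subalgebra of $\alg A_0$ generated by $\hat f(X) = f(X) \subseteq B$, hence is contained in $\alg B$; so $\hat f$ factors through $\alg B$ and extends $f$. Uniqueness follows immediately since any two homomorphisms $\alg S \to \alg B$ agreeing on the generating set $X$ must agree on all of $\alg S$. Together these verify the universal property of the free algebra, so $\alg S \cong \free_{\vv V_0}(X)$. No step is truly difficult; the only point that requires a moment's care is the last one, namely that the restriction of $\tilde f$ actually lands in $\alg B$ rather than merely in $\alg A_0$, and this is settled by the remark that $\alg B$ is a subalgebra of $\alg A_0$ containing $f(X)$ and hence containing the subalgebra generated by $f(X)$.
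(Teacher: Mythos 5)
Your proof is correct, but it takes a genuinely different route from the paper's. You verify directly that the subalgebra $\alg S$ of the $0$-free reduct of $\free_{\vv V}(X)$ generated by $X$ has the universal mapping property for $\vv V_0$ over $X$; the one nontrivial point, which you correctly isolate and settle, is that the restriction to $\alg S$ of the lift $\tilde f\colon \free_{\vv V}(X)\to\alg A$ actually lands inside $\alg B$, and this follows because $\alg B$ is a subalgebra of the reduct $\alg A_0$ containing $f(X)$ while $\alg S$ is generated by $X$. The paper argues syntactically instead: it identifies $\alg S$ with the image of the term algebra $\mathcal{T}_{\vv V_0}(X)$ inside $\free_{\vv V}(X)=\mathcal{T}_{\vv V}(X)/{\approx_{\vv V}}$ and reduces the claim to the equivalence $\vv V_0\models a\approx b$ iff $\vv V\models a\approx b$ for $0$-free terms $a,b$, proved by passing assignments back and forth between a member of $\vv V$ and its subreducts. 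Both arguments rest on the same underlying fact (an assignment into a member of $\vv V_0$ lifts to the ambient $\mathsf{FL}_e$-algebra), but the paper's version makes the equational content explicit --- two $0$-free terms are $\vv V_0$-equivalent iff they are $\vv V$-equivalent --- which is the form of the statement reused later in the analysis of positive MV-terms, whereas your version is more categorical and avoids term algebras altogether. Either is acceptable.
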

\begin{proof}
Clearly $\mathcal{T}_{\vv V_{0}}(X)$ is exactly the subalgebra of the $0$-free reduct of $\mathcal{T}_{\vv V}(X)$ generated by $X$. 
Thus, the thesis follows from the fact that, given terms $a, b \in \mathcal{T}_{\vv V_{0}}(X) \sse \mathcal{T}_{\vv V}(X)$, $\vv V_{0} \models a \approx b$ iff $\vv V \models a \approx b$.
Indeed, if $\vv V_0 \not\models a \approx b$, there exists $\alg A_0$ subreduct of $\alg A \in \vv V$ that does not satisfy $a \approx b$ for some assignments to the variables in $a, b$ to elements of $\alg A_0$. The same elements seen in $\alg A$ testify that also $\vv V \not\models a \approx b$.
  Vice versa, if $\vv V \not\models a \approx b$, for instance $\alg A \not\models a \approx b$, also the $0$-free reduct of $\alg A$ does not satisfy the identity, which means that $\vv V_{0} \not\models a \approx b$.
\end{proof}

We now focus on the case where $\mathsf{V} = \MV$ and $\mathsf{V}_{0} = \WH$, where the above proposition can be refined.
We call an MV-term \emph{positive} if it is written in the $0$-free signature $\{\cdot, \to, \land, \lor, 1\}$. We call an MV-term \emph{negative} if it is the negation of a positive term, where as usual $\neg x := x \to 0$.
The following result also follows from \cite[Lemma 1]{AB10}.
\begin{lemma}\label{prop:posneg}
Every MV-term $t$ is either equivalent in $\MV$ to a positive term or to a negative term.
\end{lemma}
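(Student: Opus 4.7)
The plan is to exploit the Aglianò--Panti characterization of $\freeW(n)$ recorded in equation (\ref{eq:freeWH}): a McNaughton function lies in the free Wajsberg hoop precisely when it takes the value $1$ at $\uno$. With this in hand, the statement reduces to a simple dichotomy on the value that $t$ takes when all variables are set to $1$.

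First, I would prove by a routine structural induction on the MV-term $t(x_1,\dots,x_n)$ that $t(\uno)\in\{0,1\}$. The base cases are immediate: variables and the constant $1$ evaluate to $1$ at $\uno$, while the constant $0$ evaluates to $0$. For the induction step, one checks that the set $\{0,1\}\subseteq [0,1]$ is closed under each of the operations $\cdot$, $\to$, $\land$, $\lor$, $\neg$ (and hence $\oplus$), since these operations restrict to the ordinary Boolean operations on $\{0,1\}$. Hence every MV-term, viewed as an element of $\freeMV(n)$, takes value $0$ or $1$ at $\uno$.

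Next, I would split into the two cases. If $t(\uno)=1$, then $t\in\freeW(n)$ by (\ref{eq:freeWH}), so (as an element of the free Wajsberg hoop) $t$ coincides with some element represented by a positive term $p\in\mathcal{T}_{\WH}(\{x_1,\dots,x_n\})$. Because $[0,1]_{\MV}$ generates $\MV$, equality of the associated functions on $[0,1]^n$ is the same as equivalence in $\MV$, hence $\MV\models t\approx p$ and $t$ is equivalent to a positive term. If instead $t(\uno)=0$, then $(\neg t)(\uno)=1$, so by the previous case $\neg t$ is equivalent in $\MV$ to a positive term $p$; applying $\neg$ to both sides and using $\neg\neg x\approx x$ yields $\MV\models t\approx \neg p$, so $t$ is equivalent to a negative term.

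There is no real obstacle here: the only thing one has to be a little careful about is the legitimacy of invoking (\ref{eq:freeWH}), i.e.\ that a function in $\freeMV(n)$ which happens to take value $1$ at $\uno$ can actually be written by a positive term. This is exactly the content of the Aglianò--Panti description of $\freeW(n)$, so we can cite it directly. The alternative (a purely syntactic induction handling the mixed cases such as $\neg p\cdot\neg q \equiv \neg(p\oplus q)$) would force us to independently show that $p\oplus q$ is equivalent to a positive term whenever $p,q$ are positive, which is essentially the same statement, so routing the argument through the evaluation at $\uno$ is the cleanest path.
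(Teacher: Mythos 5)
Your proof is correct as a standalone argument, but it takes a genuinely different route from the paper, and the difference matters for how the lemma is used later. The paper proves the lemma by a purely syntactic induction on the structure of $t$, pushing negations through $\cdot$ and $\to$ via specific MV-identities such as $\neg(f\cdot\neg g)=f\to g$ and $\neg(\neg f\cdot\neg g)=(f\to(f\cdot g))\to g$; this keeps the proof self-contained and makes visible exactly which identities are needed (which is why the paper can then remark that the lemma fails to generalize to arbitrary involutive structures). You instead route everything through the semantic characterization (\ref{eq:freeWH}), reducing the dichotomy to the value of $t$ at $\uno$; the individual steps (the $\{0,1\}$-valuedness at $\uno$, the passage from equality of functions on $[0,1]^n$ to equivalence in $\MV$, the negation trick) are all sound. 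The caveat is architectural rather than logical: the paper's stated purpose for this lemma is to combine it with Propositions \ref{prop:freereduct} and \ref{prop:pos} to give an \emph{alternative proof} of Aglian\`o--Panti's Theorem 3.1, i.e.\ of (\ref{eq:freeWH}) itself. Your proof cites (\ref{eq:freeWH}) as an external black box --- legitimate, since it is independently proved in \cite{AglianoPanti1999}, and you flag the dependency explicitly --- but if adopted in the paper it would render the subsequent ``alternative proof'' of that theorem circular. So the syntactic induction buys independence from Aglian\`o--Panti and finer information about which axioms are used, while your argument buys brevity at the cost of presupposing the hardest ingredient, namely that every McNaughton function with $f(\uno)=1$ is generated by the projections in the $0$-free signature.
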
 
\begin{proof}

We prove this by induction on the construction of the term $t$. 
For the base case, obviously the constant 1 and the generators are positive terms, while $\MV \models 0 \approx \neg 1$. 

Let now $f, g$ be positive terms, we show that $f * g, f * \neg g, \neg f * g, \neg f * \neg g$ for $* \in \{\cdot, \to\}$ are either equivalent to positive terms themselves or their negation is, which implies the claim.

Let us start with $* = \cdot$. Clearly $f \cdot g$ is positive. The negation of $f \cdot \neg g$ is equivalent to a positive term, indeed in $\MV$, $\neg (f \cdot \neg g) = f \to \neg\neg g = f \to g$. For the last case, $\neg(\neg f \cdot \neg g) = (f \to (f \cdot g)) \to g$ (see \cite{AglianoPanti1999}, but it can also be easily checked in the standard MV-algebra), which is positive. 

Now we check $* = \to$. Clearly $f \to g$ is positive, and $\neg f \to \neg g = \neg (\neg f \cdot g) = g \to f$. Moreover, $f \to \neg g = \neg (f \cdot g)$, thus its negation is equivalent to a positive term. Finally, $\neg f \to g = \neg(\neg f \cdot \neg g) =  (f \to (f \cdot g)) \to g$ which is positive.

Notice that a term cannot be equivalent to both a positive and a negative MV-term. Indeed, if we evaluate on any MV-algebra all variables to $1$, such an evaluation of a positive term gives $1$, while it would give $0$ on a negative term. 
\end{proof}
Notice that in the proof of the lemma we used both involutivity and the identity $\neg(\neg f \cdot \neg g) = (f \to (f \cdot g)) \to g$ that does not hold in all involutive structures, thus the lemma cannot be generalized outside of $\mathsf{MV}$ in a straightforward way.

Let us then call a function $f \in \free_{\sf MV}(n)$ \emph{positive} if it corresponds to the equivalence class of a positive term.
Then the previous lemma also gives us a different point of view on the description of the finitely generated free Wajsberg hoops in \cite{AglianoPanti1999}, indeed we can show that positive McNaughton functions are exactly those that are $1$ in ${\bf 1}$.
\begin{proposition}\label{prop:pos}
A function $f \in \free_{\sf MV}(n)$ is positive if and only if $f({\bf 1}) = 1$.
\end{proposition}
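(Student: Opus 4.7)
The plan is to use Lemma \ref{prop:posneg} (the dichotomy that every MV-term is equivalent to either a positive or a negative term, and not both) together with a straightforward check that the value of a positive term at $\mathbf{1}$ is always $1$.

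For the forward direction, I would prove by induction on the construction of a positive term $t$ that the associated McNaughton function satisfies $t(\mathbf{1}) = 1$. The base cases are immediate: the constant $1$ evaluates to $1$, and each projection $\pi_i$ evaluated at $\mathbf{1} = (1,\ldots,1)$ gives $1$. For the inductive step, I note that $\mathbf{1}$ is a fixed point of all positive connectives when applied componentwise: $1 \cdot_{\L} 1 = 1$, $1 \to_{\L} 1 = 1$, $\min(1,1) = 1$, $\max(1,1) = 1$. Hence if $f, g$ are positive McNaughton functions with $f(\mathbf{1}) = g(\mathbf{1}) = 1$, then $(f * g)(\mathbf{1}) = 1$ for $* \in \{\cdot, \to, \land, \lor\}$.

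For the backward direction, suppose $f \in \free_{\sf MV}(n)$ satisfies $f(\mathbf{1}) = 1$. By Lemma \ref{prop:posneg}, $f$ is either equivalent to a positive term or to a negative term. If $f$ were equivalent to a negative term, say $f = \neg g$ for some positive term $g$, then by the forward direction $g(\mathbf{1}) = 1$ in the standard MV-algebra, whence $f(\mathbf{1}) = \neg 1 = 0$, contradicting the hypothesis. Therefore $f$ must be positive.

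There is no serious obstacle here: the argument is essentially a two-line application of the dichotomy already established in Lemma \ref{prop:posneg}, with the forward direction being the routine observation that $\mathbf{1}$ is preserved by the positive operations on the standard MV-algebra. The only delicate point worth emphasizing is that the disjointness at the end of the proof of Lemma \ref{prop:posneg} (a positive term evaluates to $1$ at $\mathbf{1}$ while a negative one evaluates to $0$) is precisely what gives the characterization, so the present proposition can be viewed as a clean restatement of that disjointness observation.
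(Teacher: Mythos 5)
Your proof is correct and follows essentially the same route as the paper: the forward direction is the same induction on the construction of a positive term using that the operations are pointwise and fix $\mathbf{1}$, and the backward direction invokes the dichotomy of Lemma \ref{prop:posneg} together with the observation that negative terms evaluate to $0$ at $\mathbf{1}$. No issues.
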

\begin{proof}
If a function $f$ is positive, then the statement can be easily seen by induction on the construction of a positive term $f$ corresponds to, given that the operations are defined pointwise. 
Vice versa, since positive terms are $1$ in $\bf 1$, negative terms are clearly $0$ and the claim follows from Proposition \ref{prop:posneg}. 
\end{proof}
Thus we get an alternative proof of Theorem 3.1 in \cite{AglianoPanti1999}, as a consequence of Propositions $\ref{prop:freereduct}$ and \ref{prop:pos}. A similar line of thought to show Aglian\`o and Panti's result has been followed in \cite{ACV}, however what we presented here is in higher generality and contains more details, as they are relevant for what follows.
\begin{corollary}
The free Wajsberg hoop over $n$ generators is isomorphic to $\freeW(n) = \{f \in \freeMV(n) : f(\uno) = 1\}$.
\end{corollary}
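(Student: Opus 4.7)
The plan is to chain together the two main results of this section. First, I would invoke Proposition~\ref{prop:freereduct} in the special case $\vv V = \MV$ and $\vv V_{0} = \WH$: this immediately gives that $\freeW(n)$ is isomorphic to the subalgebra of the $0$-free reduct of $\freeMV(n)$ generated by the $n$ projections $\pi_{1},\dots,\pi_{n}$. So the task reduces to identifying this distinguished subalgebra with the set $\{f \in \freeMV(n) : f(\uno) = 1\}$.

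For the inclusion from left to right, I would note that the subalgebra generated by the projections in the $0$-free reduct consists precisely of (equivalence classes of) positive MV-terms evaluated in $\freeMV(n)$, i.e. of positive McNaughton functions in the sense defined just before Proposition~\ref{prop:pos}. By Proposition~\ref{prop:pos}, every such function takes the value $1$ at $\uno$, so the subalgebra is contained in $\{f \in \freeMV(n) : f(\uno) = 1\}$.

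For the reverse inclusion, suppose $f \in \freeMV(n)$ satisfies $f(\uno) = 1$. Then $f$ is the class of some MV-term $t$, and by Lemma~\ref{prop:posneg} $t$ is equivalent in $\MV$ either to a positive term or to a negative term. The observation made at the end of the proof of Lemma~\ref{prop:posneg} (that positive terms evaluate to $1$ at $\uno$ while negative ones evaluate to $0$) together with $f(\uno)=1$ rules out the negative case, so $t$ is equivalent to a positive term. Hence $f$ lies in the subalgebra generated by the projections in the $0$-free reduct. Combining the two inclusions with the isomorphism from Proposition~\ref{prop:freereduct} yields the stated identification.

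No serious obstacle is anticipated here, since the corollary is essentially a bookkeeping consequence of the two propositions. The only point that requires a little care is ensuring that the set $\{f \in \freeMV(n) : f(\uno) = 1\}$ is automatically closed under the $0$-free operations of $\freeMV(n)$, but this is immediate from the fact that the operations $\cdot, \to, \land, \lor$ all send the pair $(1,1)$ to $1$ in $[0,1]_{\MV}$, so membership is preserved pointwise.
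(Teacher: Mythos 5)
Your proposal is correct and follows essentially the same route as the paper: the corollary is obtained by combining Proposition~\ref{prop:freereduct} (identifying $\freeW(n)$ with the subalgebra of the $0$-free reduct of $\freeMV(n)$ generated by the projections) with Proposition~\ref{prop:pos} (identifying that subalgebra, i.e.\ the positive functions, with $\{f \in \freeMV(n) : f(\uno)=1\}$), the latter resting on Lemma~\ref{prop:posneg} exactly as you describe.
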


We will see that the connection described in the last corollary lifts to a categorical equivalence between finitely presented Wajsberg hoops and a subcategory of finitely presented MV-algebras, given by quotients of free MV-algebras with respect to positive McNaughton functions.

As shown in \cite{Mundici11}, finitely presented MV-algebras are isomorphic to algebras of McNaughton functions restricted to rational polyhedra.
In particular, given $\scP \sse [0,1]^{n}$, we write $\M(\scP)$, for the algebra of functions in $\freeMV(n)$ restricted to $\scP$:
\begin{equation}\label{eq:MP}
\M(\scP) = \{f_{\rest \scP} : f \in \freeMV(n)\}
\end{equation}
Moreover, a finitely presented MV-algebra of the kind $\freeMV(n)/f$ is isomorphic to $\M(O_{f})$ \cite{Mundici11}. 
We shall then now see how one can characterize one-sets of positive McNaughton functions. 
\begin{definition}
For any $n \in \mathbb{N}$, we call a polyhedron in $[0,1]^{n}$ \emph{pointed} if it contains the vertex $\uno$. 
\end{definition}
\begin{lemma}\label{thm:unirational}
Let $\scP \subseteq [0,1]^{n}$. The following are equivalent.
\begin{enumerate}
\item $\scP$ is a pointed rational polyhedron.
\item $\scP = O_{f}$ for a positive function $f \in \freeMV(n)$.
\end{enumerate}
\end{lemma}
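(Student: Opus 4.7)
The plan is to reduce the equivalence to Proposition \ref{prop:pos} combined with the standard correspondence between one-sets of McNaughton functions and rational polyhedra in the unit cube.

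For the direction $(2) \Rightarrow (1)$, suppose $f \in \freeMV(n)$ is positive. The one-set $O_f$ is the preimage under a piecewise linear continuous function with integer coefficients of the closed point $\{1\}$, and such preimages are known to be rational polyhedra contained in $[0,1]^n$ (this is a classical consequence of McNaughton's theorem; it is also recorded explicitly in \cite{Mundici11}). Positivity of $f$ gives, via Proposition \ref{prop:pos}, that $f(\uno)=1$, so $\uno \in O_f$, which is precisely the pointedness condition. Hence $O_f$ is a pointed rational polyhedron.

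For the direction $(1) \Rightarrow (2)$, I would invoke the converse half of the same classical correspondence: every rational polyhedron $\scP \subseteq [0,1]^n$ can be realized as the one-set of some McNaughton function $f \in \freeMV(n)$. (One way to see this is to write $\scP$ as a finite union of rational simplices and use the standard construction of a McNaughton function vanishing outside a prescribed rational polyhedron, then take $1$ minus it; this is precisely the content used in \cite{Mundici11} to represent finitely presented MV-algebras as $\M(\scP)$.) Since $\scP$ is pointed, $\uno \in \scP = O_f$, i.e.\ $f(\uno) = 1$, and so by Proposition \ref{prop:pos} the function $f$ is positive.

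The main step that is not entirely routine is the appeal to the surjectivity of the map $f \mapsto O_f$ from McNaughton functions onto rational polyhedra of $[0,1]^n$; everything else is a direct application of Proposition \ref{prop:pos}. Since the author has already introduced the notion of $O_f$ in (\ref{eq:oneset}) and referenced \cite{Mundici11} for the representation of finitely presented MV-algebras as $\M(O_f)$, this fact can be cited without further work, and the proof then reduces to the two short chains of implications above.
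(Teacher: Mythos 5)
Your proof is correct and follows essentially the same route as the paper: the author likewise combines the fact from \cite[Corollary 2.10]{Mundici11} that every rational polyhedron in $[0,1]^n$ is the one-set of some McNaughton function (together with the converse direction that one-sets are rational polyhedra) with Proposition \ref{prop:pos} characterizing positivity as $f(\uno)=1$. Your write-up simply spells out the two implications in more detail than the paper does.
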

\begin{proof}
The proof follows from \cite[Corollary 2.10]{Mundici11}, stating that every rational polyhedron in $[0,1]^n$ is the $1$-set of a McNaughton function $f \in \freeMV(n)$, and Proposition \ref{prop:pos}, that shows that a McNaughton function $f$ is positive if and only if $f(\uno) = 1$.
\end{proof}

\begin{lemma}\label{lemma:posquot}
Consider a pointed rational polyhedron $\scP \sse [0,1]^n$, then if $g_{\rest \scP} = h_{\rest \scP}$ in $\M(\scP)$, $g$ is positive if and only if $h$ is positive.
\end{lemma}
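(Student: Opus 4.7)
The plan is to exploit the fact that $\scP$ is pointed, i.e., $\uno \in \scP$, so that the equality $g_{\rest \scP} = h_{\rest \scP}$ forces agreement at the distinguished vertex $\uno$. Positivity of a McNaughton function has already been characterized purely in terms of its value at $\uno$ by Proposition \ref{prop:pos}, so the statement should reduce immediately to this pointwise evaluation.

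Concretely, first I would observe that, since $\scP$ is pointed, $\uno \in \scP$, hence $g_{\rest \scP}(\uno) = h_{\rest \scP}(\uno)$, that is, $g(\uno) = h(\uno)$. Next, I would apply Proposition \ref{prop:pos} on both sides: $g$ is positive iff $g(\uno) = 1$, and likewise $h$ is positive iff $h(\uno) = 1$. Combining these two equivalences with the equality $g(\uno) = h(\uno)$ gives $g$ positive iff $h$ positive.

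There is essentially no obstacle here: the entire content of the lemma is that pointed polyhedra are exactly those for which the test point $\uno$ from Proposition \ref{prop:pos} lies inside the domain, so the notion of positivity descends to $\M(\scP)$. The only thing to be careful about is that the characterization from Proposition \ref{prop:pos} is applied to $g$ and $h$ as elements of $\freeMV(n)$, not merely to their restrictions; but since positivity is by definition a property of the (equivalence class of the) underlying MV-term, this is automatic.
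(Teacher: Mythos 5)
Your proof is correct and is essentially the paper's own argument: both use that $\uno \in \scP$ forces $g(\uno) = h(\uno)$ and then invoke the characterization of positivity via the value at $\uno$ (Proposition \ref{prop:pos}). Your version merely makes explicit the appeal to that proposition, which the paper leaves implicit.
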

\begin{proof}
Since $\scP$ is pointed, it contains the point $\one$.
Thus if two functions $g$ and $h$ in $\freeMV(n)$ coincide over $\scP$, they have the same value at $\one$ and the claim follows. 
\end{proof}

The previous lemma justifies the following definition. For all unexplained notions of category theory we refer the reader to \cite{Mac}.
\begin{definition}
We call \emph{positive} an MV-algebra of the kind $\M(\scP)$, where $\scP$ is a pointed rational polyhedron.
Let $\scP \sse[0,1]^n, \scQ \sse [0,1]^m$ be pointed rational polyhedra. A homomorphism of positive MV-algebras $\alpha: \M(\scP) \to \M(\scQ)$ is \emph{positive} if for all positive functions $h \in \free_{\MV}(n)$, $\alpha(h_{\rest\scP})$ is mapped to the restriction of a positive function $k$, $\alpha(h_{\rest\scP}) = k_{\rest \scQ}$.

We call $\MV^{+}$ the category of positive MV-algebras with positive homomorphisms. 
\end{definition}
Notice that $\MV^{+}$ is a category since the identity maps are positive homomorphisms and a composition of positive homomorphisms is still positive. Moreover, $\MV^{+}$ is a (non-full) subcategory of finitely presented MV-algebras.

Let now $n \in \nn$, and $\scP \sse [0,1]^{n}$ be a pointed rational polyhedron. We define the \emph{Wajsberg hoop relative to $\scP$}, $\W(\scP)$, to be the algebra of functions in $\freeW(n)$ restricted to $\scP$:
\begin{equation}\label{eq:WP}
\W(\scP) = \{f_{\rest \scP} : f \in \freeW(n)\}
\end{equation}
Notice that this is a well-defined Wajsberg hoop since the operations of the free algebra are defined pointwise.
We show that, analogously to the MV-algebraic case, all finitely presented Wajsberg hoops are of this kind (up to isomorphism).
\begin{lemma}\label{lemma:isoquotient}
Given $f \in \freeW(n)$, the map $\omega: \W(O_{f}) \to \freeW(n)/f$ defined as $\omega(g_{\rest O_{f}}) = g/f$ is an isomorphism.
\end{lemma}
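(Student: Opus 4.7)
The plan is to establish the isomorphism by simultaneously proving well-definedness and injectivity of $\omega$ via the filter--congruence correspondence recalled in Section~\ref{sec:prel}, then noting that the homomorphism and surjectivity properties are essentially free.

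First, I would observe that $\freeW(n)/f$ denotes the quotient by the principal congruence $\theta_F$ associated with the congruence filter $F = \Fil_{\freeW(n)}(f)$. Hence, for $g, h \in \freeW(n)$, we have $g/f = h/f$ if and only if both $g \to h$ and $h \to g$ belong to $\Fil_{\freeW(n)}(f)$. Applying Lemma~\ref{lemma:fil-oneset}, this is equivalent to $O_f \subseteq O_{g \to h}$ and $O_f \subseteq O_{h \to g}$, i.e., for every $\x \in O_f$ we have $g(\x) \leq h(\x)$ and $h(\x) \leq g(\x)$. Together these say exactly that $g_{\rest O_f} = h_{\rest O_f}$ in $\W(O_f)$. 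This single equivalence gives both well-definedness ($g_{\rest O_f} = h_{\rest O_f}$ implies $g/f = h/f$) and injectivity (the converse implication) of $\omega$.

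Next, surjectivity is immediate: any element of $\freeW(n)/f$ has the form $g/f$ for some $g \in \freeW(n)$, which is the image under $\omega$ of $g_{\rest O_f}$. The homomorphism property is also routine, since the operations of $\W(O_f)$ are inherited pointwise from those of $\freeW(n)$, and the quotient map $g \mapsto g/f$ is itself a homomorphism of Wajsberg hoops; the map $\omega$ is obtained by factoring this quotient through the restriction $g \mapsto g_{\rest O_f}$, and therefore preserves $\cdot, \to, \land, \lor$, and $1$.

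I don't expect any serious obstacle here: the argument is essentially an application of ideal-determinacy of $\WH$ (the bijective correspondence between congruence filters and congruences of commutative integral residuated lattices) combined with the already-established geometric translation of Lemma~\ref{lemma:fil-oneset}. The only point worth double-checking is that Lemma~\ref{lemma:fil-oneset}, stated inside $\freeW(n)$ rather than $\freeMV(n)$, indeed gives the required characterization of the principal filter on both implications $g \to h$ and $h \to g$ -- but this is direct, as both implications are Wajsberg functions whenever $g, h$ are, and the one-set characterization applies uniformly.
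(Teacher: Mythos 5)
Your proposal is correct and follows essentially the same route as the paper's proof: the filter--congruence correspondence combined with Lemma~\ref{lemma:fil-oneset} to translate $g/f = h/f$ into agreement of $g$ and $h$ on $O_f$, with surjectivity and the homomorphism property handled as routine pointwise observations. The only cosmetic difference is that you package well-definedness and injectivity into a single biconditional chain, whereas the paper argues the two directions separately.
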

\begin{proof}
We first show that the map $\omega$ is well-defined, that is, if $g, h \in \freeW(n)$, and $g(\x) = h(\x)$ for $\x \in O_{f}$ then $g/f = h/f$. By the correspondence between congruences and filter, $g/f = h/f$ if and only if $g \to h, h \to g \in \Fil(f)$, the congruence filter generated by $f$ in $\freeW(n)$. This, by Lemma \ref{lemma:fil-oneset}, happens if and only if $O_{f} \sse O_{g \to h}, O_{h \to g}$, that is, for all $\x \in O_{f}, (g \to h)(\x) = 1 $ and $(h \to g) (\x) = 1$. This holds, since $g$ and $h$ coincide on $O_{f}$.

It is easy to see that $\omega$ is an homomorphism. Indeed, clearly $\omega(1_{\rest O_f}) = 1/f$. Moreover, given $g, h \in \freeW(n)$ and $* \in \{\cdot, \to\}$, since operations are defined pointwise, $\omega(g_{\rest O_{f}} * h_{\rest O_{f}}) = \omega((g* h)_{\rest O_{f}}) = (g*h)/f = g/f * h/f = \omega(g_{\rest O_{f}}) * \omega(h_{\rest O_{f}})$.

We now prove that $\omega$ is injective. Suppose $g_{\rest O_{f}} \neq h_{\rest O_{f}}$, then there is $\x \in O_{f}$ such that $g(\x) \neq h(\x)$. Thus either $g(\x) \to h(\x) <1 $ or $h(\x) \to g(\x) <1$, which implies that either $O_{f} \not\sse O_{g \to h}$ or $O_{f} \not\sse O_{h \to g}$, thus either $g \to h$ or $h \to g \notin \Fil(f)$. We conclude that $g/f \neq h/f$, and thus $\omega$ is injective. Surjectivity is easily checked, since given $g/f \in \freeW(n)/f$, $\omega(g_{\rest O_{f}}) = g/f$.
\end{proof}
Therefore we obtain the following.
\begin{theorem}\label{thm:finpresW}
Let $\alg A$ be a Wajsberg hoop, then the following are equivalent.
\begin{enumerate}
\item $\alg A$ is finitely presented.
\item $\alg A \cong \W(\scP)$ for a pointed polyhedron $\scP \sse [0,1]^{n}$ for some $n \in \mathbb{N}$.
\end{enumerate}
\end{theorem}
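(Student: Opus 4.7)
The plan is to prove each direction using the machinery already set up in the preceding lemmas, chaining Lemma~\ref{lemma:isoquotient} with Lemma~\ref{thm:unirational}.

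For the direction (1) $\Rightarrow$ (2), I would start from the definition: a finitely presented Wajsberg hoop $\alg A$ is isomorphic to a quotient $\freeW(n)/\theta$ for some $n \in \mathbb{N}$ and some finitely generated congruence $\theta$. I would then invoke the preliminaries on ideal-determined varieties: congruences of $\freeW(n)$ are in bijection with congruence filters, and a finitely generated congruence filter of a commutative integral residuated lattice is principal (take the product, equivalently the meet, of a finite generating set). Hence $\theta = \theta_{\Fil(f)}$ for some $f \in \freeW(n)$, and $\alg A \cong \freeW(n)/f$. Applying Lemma~\ref{lemma:isoquotient} yields $\alg A \cong \W(O_f)$, and since $f \in \freeW(n)$ is a positive function by the characterization $\freeW(n) = \{f \in \freeMV(n) : f(\uno)=1\}$, Lemma~\ref{thm:unirational} tells us that $O_f$ is a pointed rational polyhedron of $[0,1]^n$, completing this direction.

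For the direction (2) $\Rightarrow$ (1), I would run the same chain in reverse. Assume $\alg A \cong \W(\scP)$ for a pointed rational polyhedron $\scP \sse [0,1]^n$. By Lemma~\ref{thm:unirational}, $\scP = O_f$ for some positive $f \in \freeMV(n)$, so in particular $f \in \freeW(n)$. Lemma~\ref{lemma:isoquotient} gives $\W(O_f) \cong \freeW(n)/f$, hence $\alg A$ is the quotient of a finitely generated free Wajsberg hoop by the principal (thus finitely generated) congruence associated with the filter $\Fil(f)$, which is exactly the definition of being finitely presented.

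I do not expect any serious obstacle here: both implications amount to linking two already established bijections (the one between principal filters of $\freeW(n)$ and pointed rational polyhedra coming from Lemma~\ref{thm:unirational} and Lemma~\ref{lemma:fil-oneset}, and the isomorphism $\W(O_f) \cong \freeW(n)/f$ from Lemma~\ref{lemma:isoquotient}). The only point requiring care is to justify that a finitely generated congruence on a Wajsberg hoop is principal, which follows because $\WH$ consists of commutative integral residuated lattices and the explicit description $\Fil_{\alg A}(X) = \{b : a_1 \cdots a_n \leq b,\, a_i \in X\}$ reduces any finite generating set to a single element via the monoid product.
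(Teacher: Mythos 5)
Your proof is correct and follows exactly the route the paper takes: the paper's own proof is the one-line remark that the theorem ``directly follows from Lemma \ref{lemma:isoquotient} and Lemma \ref{thm:unirational}'', and your write-up simply makes explicit the same chain (finitely presented $=$ quotient by a principal congruence filter, then $\W(O_f)\cong\freeW(n)/f$, then the correspondence between one-sets of positive functions and pointed rational polyhedra), including the reduction of finitely generated filters to principal ones already recorded in the paper's preliminaries.
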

\begin{proof}
Directly follows from Lemma \ref{lemma:isoquotient} and Lemma \ref{thm:unirational}. \end{proof}

Let $\WH_{\mathsf{fp}}$ be the category of finitely presented Wajsberg hoops with homomorphisms, that is a full subcategory of the algebraic category of Wajsberg hoops.  
We now define a map $\Phi: \MV^{+} \to \WH_{\mathsf{fp}}$. For all pointed rational polyhedra $\scP \sse [0,1]^n, \scQ \sse [0,1]^m$, and positive homomorphism $\alpha: \M(\scP) \to \M(\scQ)$
\begin{equation}\label{eq:functphi}
\begin{split}
\Phi(\M(\scP)) &= \W(\scP) \\
\Phi(\alpha): \W(\scP) \to \W(\scQ), \;\Phi(\alpha) (f_{\rest \scP}) &= \alpha(f_{\rest \scP}) .
\end{split}
\end{equation}
\begin{theorem}\label{thmcateq}
Finitely presented Wajsberg hoops with homomorphisms and positive finitely presented MV-algebras with positive homomorphisms are categorically equivalent.
\end{theorem}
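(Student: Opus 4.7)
The plan is to verify that $\Phi$ is a well-defined functor and then show that it is essentially surjective, faithful, and full. Functoriality is immediate: by definition of a positive homomorphism, $\alpha(f_{\rest\scP})$ for positive $f$ lies in the image of a positive function restricted to $\scQ$, hence in $\W(\scQ)$, so $\Phi(\alpha)$ is a well-defined hoop homomorphism $\W(\scP)\to\W(\scQ)$, and identities/composition are preserved tautologically since $\Phi(\alpha)$ is just the restriction of $\alpha$ to the positive subalgebra. Essential surjectivity is a direct consequence of Theorem \ref{thm:finpresW}: every finitely presented Wajsberg hoop is isomorphic to $\W(\scP)=\Phi(\M(\scP))$ for some pointed rational polyhedron $\scP$.

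For faithfulness, suppose $\alpha,\beta:\M(\scP)\to\M(\scQ)$ are positive homomorphisms with $\Phi(\alpha)=\Phi(\beta)$, so that $\alpha$ and $\beta$ agree on $\W(\scP)$. By Lemma \ref{prop:posneg}, every element of $\M(\scP)$ is either of the form $h_{\rest\scP}$ for a positive function $h$, or of the form $(\neg h)_{\rest\scP}$ for a positive function $h$; in the latter case $\alpha((\neg h)_{\rest\scP})=\neg\alpha(h_{\rest\scP})=\neg\beta(h_{\rest\scP})=\beta((\neg h)_{\rest\scP})$, so $\alpha=\beta$.

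The main work is fullness. Given a hoop homomorphism $\phi:\W(\scP)\to\W(\scQ)$, present $\W(\scP)\cong\freeW(n)/f$ and $\W(\scQ)\cong\freeW(m)/g$ with $f,g$ positive (using Lemma \ref{thm:unirational} so that $\scP=O_f$, $\scQ=O_g$, and invoking Lemma \ref{lemma:isoquotient}). Let $x_1,\dots,x_n$ be the generators of $\freeW(n)$ and write $\phi(x_i/f)=s_i/g$ for some positive functions $s_i\in\freeW(m)\subseteq\freeMV(m)$. By the universal property of the free MV-algebra, there is a unique MV-homomorphism $\tilde\alpha:\freeMV(n)\to\freeMV(m)/g$ sending $x_i\mapsto s_i/g$. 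Since $f$ is a positive term and positive operations coincide in an MV-algebra and in its Wajsberg hoop reduct, $\tilde\alpha(f)=f(s_1/g,\dots,s_n/g)$ agrees with the value obtained by computing inside $\freeW(m)/g$, which equals $\phi(f/f)=\phi(1/f)=1/g$. Hence $\tilde\alpha$ factors through the quotient by the congruence filter generated by $f$, giving an MV-homomorphism $\alpha:\M(\scP)\cong\freeMV(n)/f\to\freeMV(m)/g\cong\M(\scQ)$. For any positive term $t$, $\alpha(t/f)=t(s_1/g,\dots,s_n/g)$, which is again the class of a positive function (substitution of positive terms into a positive term is positive), so $\alpha$ is a positive homomorphism, and the same computation shows $\alpha$ restricted to $\W(\scP)$ agrees with $\phi$ on generators, hence $\Phi(\alpha)=\phi$. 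The main obstacle is precisely this last step: checking that the MV-extension is well-defined on the quotient and genuinely lands in $\MV^+$, which relies crucially on $f$ being positive (so that the pointed condition is preserved) and on the stability of positivity under substitution.
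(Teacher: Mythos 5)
Your proof is correct. The overall skeleton --- essential surjectivity, faithfulness, fullness --- matches the paper's, and your arguments for the first two are essentially the ones given there (faithfulness in both cases rests on Lemma \ref{prop:posneg} plus the fact that MV-homomorphisms commute with $\neg$). Where you genuinely diverge is in the fullness step. The paper extends a hoop homomorphism $\beta$ to $\beta^+$ by stipulating its values elementwise, $\beta^+(\neg h_{\rest \scP}) = \neg\beta(h_{\rest\scP})$, using Lemma \ref{lemma:posquot} for well-definedness, and then verifies the homomorphism equations by a case analysis on the signs of the arguments (only one case is written out). You instead pass to presentations $\freeW(n)/f$ and $\freeW(m)/g$ and invoke the universal property of the free MV-algebra on the images of the generators, reducing everything to two checks: that $\tilde\alpha(f)=1$, so the map descends to $\freeMV(n)/f \cong \M(\scP)$ (this uses that $f$ is represented by a positive term and that the $0$-free operations of an MV-algebra coincide with those of its hoop reduct), and that substituting positive terms into a positive term yields a positive term, so $\alpha$ lands in $\MV^+$. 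Your route gets the homomorphism property for free from freeness instead of by case analysis, at the cost of leaning harder on Lemmas \ref{thm:unirational} and \ref{lemma:isoquotient} and on the (true, and implicit in your argument) fact that the MV-filter of $\freeMV(m)$ generated by $g$ meets $\freeW(m)$ exactly in the hoop filter generated by $g$, so that the image of $\freeW(m)$ in $\freeMV(m)/g$ really is $\W(\scQ)$. Both proofs are sound; yours is somewhat more conceptual, the paper's more self-contained at the level of individual elements.
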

\begin{proof}
The fact that $\Phi$ is a functor from $\MV^{+}$ to $\WH_{\mathsf{fp}}$ is easy to check and essentially follows from Lemmas \ref{thm:unirational}, \ref{lemma:posquot} and \ref{lemma:isoquotient}.

It suffices to show that $\Phi$ is full, faithful and essentially surjective (\cite[Theorem 1, p. 91]{Mac}). Let us first show the latter. Let $\alg A$ be a finitely presented Wajsberg hoop, then by what we recalled in the preliminaries in Section \ref{sec:prel}, $\alg A$ is isomorphic to a quotient of the kind $\freeW(n)/f$, which by Lemma \ref{lemma:isoquotient}, is isomorphic to $\W(O_f)$. Since $f$ is a positive function, $O_f$ is a pointed rational polyhedron by Lemma \ref{thm:unirational}.
Since $\Phi(\M(O_f)) = \W(O_f)$, $\Phi$ is essentially surjective.

We now show that $\Phi$ is faithful. Consider two positive homomorphisms $\alpha_1, \alpha_2$ from $\M(\scP)$ to $\M(\scQ)$, that differ on some $f_{\rest \scP} \in \M(\scP)$. Since either $f$ is positive or $\neg f$ is, it follows from the definition and involutivity ($\neg\neg x = x$ in $\MV$) that $\Phi(\alpha_1)$ and $\Phi(\alpha_2)$ either differ on $f_{\rest \scP}$ or on $\neg f_{\rest \scP}$. Thus $\Phi$ is faithful. 

It is left to prove that the functor is full. Let $\beta: \W(\scP)  \to \W(\scQ)$ be a homomorphism of Wajsberg hoops. Consider the homomorphism $\beta^+: \M(\scP)  \to \M(\scQ)$ defined as follows. Suppose $\beta(f_{\rest \scP}) = g_{\rest \scQ}$, then define $\beta^+(f_{\rest \scP}) = g_{\rest \scQ}$ and $\beta^+(\neg f_{\rest \scP}) = \neg g_{\rest \scQ}$. $\beta^+$ is well defined since given $f, h \in \freeMV(n)$ that coincide over $\scP$, they are either both positive or both negative (Lemma \ref{lemma:posquot}). We show that $\beta^+$ is a homomorphism. It suffices to check that it preserves a constant, product and negation. The fact that it preserves constants and negation follows directly from the definition. For the product, we show one of the cases, the other ones being similar. Let $h, k \in \freeW(n)$, and suppose $\beta(h_{\rest \scP}) = x_{\rest \scQ}$ and $\beta(k_{\rest \scP}) = y_{\rest \scQ}$, then
\begin{equation*}
\begin{split}
\beta^+ (h_{\rest \scP}) \cdot \beta^+(\neg k _{\rest \scP}) = x_{\rest \scQ}  \cdot (\neg y_{\rest \scQ}) = \neg (x \to y )_{\rest \scQ} \\ = \beta^+ [\neg (h \to k)_{\rest \scP}] = \beta^+ ((h_{\rest \scP}) \cdot (\neg k_{\rest \scP}))
\end{split}
\end{equation*}
Finally, clearly $\Phi(\beta^+) = \beta$. Thus the functor is full and the proof is complete.
\end{proof}

We point out that there always is an adjunction between an algebraic category and the subcategory of the reducts given by the forgetful functor (see \cite{ACV} for the case of $\WH$ and \cite{Mo18} for a general approach). The connection we highlighted here is meant to provide a clear understanding of the geometrical duality in the following section. 

\section{Duality with pointed rational polyhedra}
In this section we present the duality between finitely presented Wajsberg hoops and pointed rational polyhedra, which follows from the categorical equivalence shown in the previous section, and the restriction to positive MV-algebras of the well-known duality between finitely presented MV-algebras and rational polyhedra in \cite{MarraSpada13}. 

In more details, in \cite{MarraSpada13} the authors show that finitely presented MV-algebras with homomorphisms are dual to a category whose objects are rational polyhedra $\scP \sse \mathbb{R}^{n}$ for some $n \in \mathbb{N}$, and the morphisms are $\mathbb{Z}$-maps between them. We recall that given a rational polyhedron $\scP \sse \mathbb{R}^{n}$ and a continuous map $z = (z_{1}, \ldots, z_{m}): \scP \to \mathbb{R}^{m}$, with $m, n \in \zz^{+}$, $z$ is a $\zz$-map (of $\scP$) if for each $j = 1,\ldots,m$, $z_{j}$ is piecewise linear with integer coefficients.
For details on rational polyhedra and $\mathbb{Z}$-maps the reader is referred to \cite{Mundici11}. 

Since any rational polyhedron in $\mathbb{R}^{n}$ is $\mathbb{Z}$-homeomorphic to a rational polyhedron in $[0,1]^{n}$ (Claim 3.5 in Theorem 3.4 in \cite{MarraSpada13}), the category of rational polyhedra in the duality with finitely presented MV-algebras can be restricted to account only for polyhedra in $[0,1]^{n}$ for some $n \in \mathbb{N}$. 

Here we choose to use this version of the duality since, although less general from the geometrical point of view, it has a clearer algebraic meaning that will be useful in the following sections: the polyhedra are the one-sets of McNaughton functions generating the principal filter associated to a finitely presented algebra. Another choice that we make here is that, given a finitely presented MV-algebra of the kind $\freeMV(n)/f$, we consider the associated polyhedron to be $O_{f}$, the one-set of the function $f$, instead of (equivalently, in the case of MV-algebras, see \cite[Corollary 2.10 ]{Mundici11}) picking the zero-set of the function $f$. 

By Lemma \ref{thm:unirational}, pointed polyhedra are mapped to positive MV-algebras by the Marra-Spada duality in \cite{MarraSpada13}. We will now show what positive homomorphisms correspond to on the geometrical side. In the full duality in \cite{MarraSpada13}, homomorphisms between finitely presented MV-algebras are exactly those obtainable in the following way via the functor $\M$ from the category of rational polyhedra to finitely presented MV-algebras that established the duality. Let $\scP \sse [0,1]^{n}, \scQ \sse [0,1]^{m}$ be rational polyhedra, and $\lambda: \scP \to \scQ$ a $\mathbb{Z}$-map. Then the map dual to $\lambda$, $\M(\lambda)$, is such that: $$f \in \M(\scQ) \mapsto f \circ \lambda \in \M(\scP)$$
and all homomorphisms between $\M(\scQ)$ and $\M(\scP)$ can be obtained in this way. Therefore we obtain the following characterization.
\begin{lemma}\label{lemma:posmorphism}
Given positive functions $f \in \freeMV(n), g \in \freeMV(m)$, the following are equivalent.
\begin{enumerate}
\item $\alpha: \M(O_f) \to \M(O_g)$ is a positive homomorphism of positive finitely presented MV-algebras.
\item $\alpha = \M(\lambda)$, where $\lambda: O_{g} \to O_{f}$ maps $\uno \in O_{g}$ to $\uno \in O_{f}$.
\end{enumerate}
\end{lemma}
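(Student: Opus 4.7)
The plan is to invoke the Marra--Spada duality to write $\alpha$ as $\M(\lambda)$ for some $\mathbb{Z}$-map $\lambda : O_g \to O_f$, and then translate the positivity condition on $\alpha$ into a pointwise condition on $\lambda$ at $\mathbf{1}$. Observe at the outset that $\mathbf{1} \in O_f$ and $\mathbf{1} \in O_g$ by Lemma \ref{thm:unirational}, since $f$ and $g$ are positive.

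For (2) $\Rightarrow$ (1), suppose $\lambda(\mathbf{1}) = \mathbf{1}$ and let $h \in \freeMV(n)$ be positive. Then $h \circ \lambda$ is a McNaughton function in $\freeMV(m)$ (composition of a $\mathbb{Z}$-map with a McNaughton function is again piecewise linear with integer coefficients), and
\[ (h \circ \lambda)(\mathbf{1}) = h(\lambda(\mathbf{1})) = h(\mathbf{1}) = 1, \]
so $h \circ \lambda$ is positive by Proposition \ref{prop:pos}. Since $\alpha(h_{\rest O_f}) = (h \circ \lambda)_{\rest O_g}$ by construction of $\M(\lambda)$, $\alpha$ is a positive homomorphism.

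For (1) $\Rightarrow$ (2), by the Marra--Spada duality $\alpha = \M(\lambda)$ for some $\mathbb{Z}$-map $\lambda : O_g \to O_f$; write $\lambda = (\lambda_1, \ldots, \lambda_n)$. Apply $\alpha$ to the coordinate projections $\pi_i \in \freeMV(n)$, which are positive (each satisfies $\pi_i(\mathbf{1}) = 1$). Positivity of $\alpha$ gives positive McNaughton functions $k_i \in \freeMV(m)$ with $\alpha(\pi_{i \rest O_f}) = k_{i \rest O_g}$. On the other hand, $\alpha(\pi_{i \rest O_f}) = (\pi_i \circ \lambda)_{\rest O_g} = \lambda_{i \rest O_g}$. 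Hence $\lambda_i$ and $k_i$ agree on $O_g$; evaluating at the point $\mathbf{1} \in O_g$ yields $\lambda_i(\mathbf{1}) = k_i(\mathbf{1}) = 1$ for every $i$, so $\lambda(\mathbf{1}) = \mathbf{1}$.

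The only delicate point is the bookkeeping in (1) $\Rightarrow$ (2): the positive representative $k_i$ of the equivalence class $\alpha(\pi_{i \rest O_f})$ need not equal $\lambda_i$ on all of $[0,1]^m$, only on $O_g$. But since $\mathbf{1} \in O_g$ by pointedness, this is precisely where we need the two functions to agree, so the argument goes through. The rest is essentially a translation between the algebraic positivity condition (preserving the subset of functions vanishing at $\mathbf{1}$ after negation, equivalently taking value $1$ at $\mathbf{1}$) and the geometric condition that $\lambda$ fixes the distinguished vertex.
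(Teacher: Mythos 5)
Your proof is correct and follows essentially the same route as the paper: direction (2) $\Rightarrow$ (1) by evaluating at $\uno$ via Proposition \ref{prop:pos}, and direction (1) $\Rightarrow$ (2) by invoking the Marra--Spada duality to write $\alpha = \M(\lambda)$ and then testing $\alpha$ on the (positive) coordinate projections to force $\lambda(\uno) = \uno$. Your version is in fact slightly more careful than the paper's, which loosely speaks of ``the identity map'' where the coordinate projections are meant, and you correctly handle the point that the positive representative $k_i$ need only agree with $\lambda_i$ on $O_g$, which suffices because $\uno \in O_g$.
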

\begin{proof}
	 (2) implies (1) by Proposition \ref{prop:pos}. Vice versa, suppose (1) holds. Then by the duality in \cite{MarraSpada13}, every such homomorphism $\alpha$ is of the kind $\M(\lambda)$, for some $\mathbb{Z}$-map $\lambda$. $\lambda $ necessarily maps $\alg 1$ to $\alg 1$. Indeed, if not, $\alpha = \M(\lambda)$ would not map (the restriction of) the identity map, which is positive, to a positive function, and therefore $\alpha$ would not be positive, a contradiction.
\end{proof}
Thus we shall define the following class of $\zz$-maps.
\begin{definition}
Given pointed rational polyhedra $\scP \sse [0,1]^n$ and $\scQ\sse [0, 1]^{m}$, we shall call a map $\zeta:  = (z_{1}, \ldots, z_{m}): \scP \to \scQ$ a \emph{pointed $\zz$-map} if it is the restriction to $\scP$ and co-restriction to $\scQ$ of a $\zz$-map from $[0,1]^n$ to $[0,1]^m$ such that $\zeta(\uno)=\uno$. 
\end{definition}
 Since $\zz$-maps are closed under compositions, so are pointed $\zz$-maps.
Examples of pointed $\zz$-maps are given by the identity function, a function constantly equal to $\one$, the projection of a pointed polyhedron $\scP \sse [0, 1]^{n}$ onto the hyperplane $x^{n} = 1$. Moreover, every Wajsberg function $f \in \freeW(n)$ is a pointed $\zz$-map of $[0, 1]^{n}$ into $[0, 1]$.

\begin{lemma}\label{lemma:pzmap}
Let $\scP \sse [0,1]^{n}$ be a pointed rational polyhedron and consider a continuous function $\zeta = (z_{1}, \ldots, z_{m}): \scP \to [0,1]^{m}$.  $\zeta$ is a pointed $\zz$-map if and only if there are Wajsberg functions $f_{1},\ldots, f_{m} \in \freeW(n)$ such that $\zeta =  (f_{1},\ldots, f_{m})_{\rest \scP}$.
\end{lemma}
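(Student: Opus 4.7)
The plan is to prove the equivalence directly from the two ingredients already in hand: McNaughton's theorem (identifying $\zz$-maps into $[0,1]$ with elements of $\freeMV(n)$) and Proposition \ref{prop:pos} (identifying Wajsberg functions with those McNaughton functions whose value at $\uno$ is $1$). The argument is essentially coordinatewise, and the role of the pointedness condition is just to convert the value $1$ at $\uno$ for each coordinate into membership in $\freeW(n)$.

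For the forward direction, suppose $\zeta$ is a pointed $\zz$-map. By definition, $\zeta$ is the restriction and co-restriction of some $\zz$-map $\hat{\zeta} = (\hat z_1, \ldots, \hat z_m) \colon [0,1]^n \to [0,1]^m$ with $\hat{\zeta}(\uno) = \uno$. A $\zz$-map into $[0,1]^m$ is, coordinatewise, a continuous piecewise linear function $[0,1]^n \to [0,1]$ with integer coefficients, i.e., a McNaughton function in $\freeMV(n)$. The condition $\hat{\zeta}(\uno) = \uno$ means $\hat z_j(\uno) = 1$ for each $j$, so by Proposition \ref{prop:pos} each $\hat z_j$ is positive, i.e., $f_j := \hat z_j \in \freeW(n)$. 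Restricting to $\scP$ gives $\zeta = (f_1, \ldots, f_m)_{\rest \scP}$ as required.

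For the converse, assume $\zeta = (f_1, \ldots, f_m)_{\rest \scP}$ with each $f_j \in \freeW(n) \sse \freeMV(n)$. Then $\hat{\zeta} := (f_1, \ldots, f_m) \colon [0,1]^n \to [0,1]^m$ has each coordinate piecewise linear with integer coefficients, so $\hat{\zeta}$ is a $\zz$-map from $[0,1]^n$ to $[0,1]^m$; and since each $f_j$ is a Wajsberg function, $f_j(\uno) = 1$ by Proposition \ref{prop:pos}, so $\hat{\zeta}(\uno) = \uno$. The restriction of $\hat\zeta$ to $\scP$ (co-restricted to $[0,1]^m$, which contains the image) is exactly $\zeta$, so $\zeta$ is a pointed $\zz$-map by definition.

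I do not expect any real obstacle here: the only point that needs a line of care is the componentwise characterization of $\zz$-maps into $[0,1]^m$, which is immediate from the definition. The equivalence is essentially the observation that pointedness on both sides (of $\scP$ and of the target cube) reduces, coordinate by coordinate, to the condition $f_j(\uno) = 1$ that singles out Wajsberg functions among McNaughton ones.
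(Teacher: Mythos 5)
Your proof is correct and follows essentially the same route as the paper: identify the coordinates with McNaughton functions and use Proposition \ref{prop:pos} to translate the condition $\hat z_j(\uno)=1$ into membership in $\freeW(n)$. The only cosmetic difference is that the paper cites Mundici's Proposition 3.2 to identify $\zz$-maps of $\scP$ with restrictions of tuples of McNaughton functions, whereas you obtain the needed extension to the whole cube directly from the paper's definition of pointed $\zz$-map, which already builds it in.
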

\begin{proof}
The statement follows from \cite[Proposition 3.2]{Mundici11}. Indeed, $\zeta$ is a pointed $\zz$-map iff $\zeta = (f_{1},\ldots, f_{m})_{\rest \scP}$ for some McNaughton functions $f_{1},\ldots, f_{m} \in \freeMV(n)$ such that  $f_{i}(\uno) = 1$ for $i = 1,\ldots m$, iff $\zeta = (f_{1},\ldots, f_{m})_{\rest \scP}$ for some Wajsberg functions $f_{1},\ldots, f_{m} \in \freeW(n)$.
\end{proof}
Let $\mathsf{P^+_{\zz}}$ be the category of pointed rational polyhedra with pointed $\zz$-maps. Let $\M$ be the previously described functor in the duality for MV-algebras, from rational polyhedra to finitely presented MV-algebras. Composing the restriction of $\mathcal{M}$ to the category of $\mathsf{P^+_{\zz}}$ with the functor $\Phi$ (notice that the composition is well defined thanks to Lemmas \ref{lemma:posmorphism} and \ref{thm:unirational}), we obtain a functor $\mathcal{W}$ from $\mathsf{P^+_{\zz}}$ to the category of finitely presented Wajsberg hoops. In particular, given pointed rational polyhedra $\scP \sse[0,1]^n$ and $\scQ \sse[0,1]^m$, and $\zeta: \scP \to \scQ$ a pointed $\Z$-map:
\begin{itemize}
	\item $\W(\scP)$ is the algebra of restricted Wajsberg function as in \ref{eq:WP}.
	\item Given $f \in \W(\scQ)$, $\W(\zeta)(f) = f \circ \zeta \in \W(\scP)$.
\end{itemize} 
We then obtain the following result.
\begin{theorem}[Duality for finitely presented Wajsberg hoops]\label{thm:duality}
The functor $\mathcal{W}$ yields a duality between the category of pointed rational polyhedra $\mathsf{P^+_{\zz}}$ and the category of finitely presented Wajsberg hoops $\WH_{\mathsf{fp}}$.
\end{theorem}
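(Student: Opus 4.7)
The plan is to obtain the duality by composing two results already available in the excerpt: the Marra--Spada duality between finitely presented MV-algebras and rational polyhedra with $\mathbb{Z}$-maps, suitably restricted to the pointed setting, with the categorical equivalence $\Phi:\MV^{+}\to\WH_{\mathsf{fp}}$ of Theorem~\ref{thmcateq}. Concretely, I would first show that the Marra--Spada functor $\mathcal{M}$ restricts to a duality $\mathcal{M}_{\rest}:\mathsf{P^+_{\zz}}\to\MV^{+}$; since $\mathcal{W}$ is by construction $\Phi\circ\mathcal{M}_{\rest}$, composing a duality with a covariant equivalence then immediately yields the claim.

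For the restriction step I would verify separately that $\mathcal{M}_{\rest}$ is well-defined, essentially surjective, faithful and full. Well-definedness on objects is exactly Lemma~\ref{thm:unirational}: if $\scP$ is a pointed rational polyhedron then $\scP=O_{f}$ for some positive $f\in\freeMV(n)$, whence $\mathcal{M}(\scP)$ is a positive MV-algebra in the sense fixed just before Theorem~\ref{thmcateq}. Well-definedness on morphisms is the $(2)\Rightarrow(1)$ direction of Lemma~\ref{lemma:posmorphism}. Essential surjectivity reduces, once more, to Lemma~\ref{thm:unirational}, since every positive MV-algebra is by definition of the form $\mathcal{M}(\scQ)$ for some pointed $\scQ$. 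Faithfulness of $\mathcal{M}_{\rest}$ is inherited from the faithfulness of $\mathcal{M}$ in the full Marra--Spada duality, and fullness is the $(1)\Rightarrow(2)$ direction of Lemma~\ref{lemma:posmorphism}: every positive homomorphism between positive MV-algebras arises as $\mathcal{M}(\zeta)$ for some pointed $\mathbb{Z}$-map $\zeta$.

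Composing the duality $\mathcal{M}_{\rest}:\mathsf{P^+_{\zz}}\to\MV^{+}$ with the covariant equivalence $\Phi:\MV^{+}\to\WH_{\mathsf{fp}}$ then yields a contravariant equivalence $\mathcal{W}=\Phi\circ\mathcal{M}_{\rest}:\mathsf{P^+_{\zz}}\to\WH_{\mathsf{fp}}$. Unwinding the definitions via (\ref{eq:functphi}), this is exactly the functor described before the statement: $\scP\mapsto\W(\scP)$ on objects, and on a pointed $\zz$-map $\zeta:\scP\to\scQ$ the assignment $\W(\zeta)(f_{\rest\scQ})=(f\circ\zeta)_{\rest\scP}$, which takes values in $\W(\scP)$ by Lemma~\ref{lemma:pzmap}. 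Functoriality and contravariance are then inherited from $\mathcal{M}$, using that pointed $\zz$-maps are closed under composition as noted right after their definition.

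The main obstacle is really bookkeeping rather than mathematics, and it is all concentrated in Lemma~\ref{lemma:posmorphism}: one has to see that it is being applied in both directions at once, to guarantee simultaneously that pointed $\zz$-maps produce positive homomorphisms (well-definedness of $\mathcal{M}_{\rest}$) and that all positive homomorphisms between positive MV-algebras come from pointed $\zz$-maps (fullness of $\mathcal{M}_{\rest}$). Once that match is in place, the theorem is a purely formal consequence of Theorem~\ref{thmcateq} together with the Marra--Spada duality.
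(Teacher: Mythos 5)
Your proposal is correct and follows exactly the paper's own argument: restrict the Marra--Spada duality to $\mathsf{P^+_{\zz}}$ and positive MV-algebras via Lemmas~\ref{thm:unirational} and~\ref{lemma:posmorphism}, then compose with the equivalence $\Phi$ of Theorem~\ref{thmcateq}. You merely spell out the well-definedness, fullness, faithfulness and essential surjectivity checks that the paper leaves implicit.
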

\begin{proof}
It follows from  Lemmas \ref{lemma:posmorphism} and \ref{thm:unirational} that the duality in \cite[Theorem 3.4]{MarraSpada13} restricts to 
a duality between the categories $\mathsf{P^+_{\zz}}$ and positive finitely presented MV-algebras with positive homomorphisms.
The claim then follows by the categorical equivalence in Theorem \ref{thmcateq}.
\end{proof}
The duality we obtained has the very same essence of the one about finitely presented MV-algebras, indeed it could have been shown by adapting the proofs in \cite{MarraSpada13}. Here however, we decided to highlight the connection between finitely presented Wajsberg hoops and the subcategory of positive finitely presented MV-algebras which we believe has interest of its own.

We point out the following description of monomorphisms and epimorphisms in the category of pointed polyhedra, that is, left and right cancellable morphisms respectively.
Following \cite{Ca15}, let us call \emph{strict} a pointed $\Z$-map that is a $\Z$-homeomorphism (a homeomorphism via a $\Z$-map whose inverse is also a $\Z$-map) onto its image.
\begin{proposition}\label{prop:dualmorph}
	Let $\scP, \scQ$ be pointed rational polyhedra, and $\zeta: \scP \to \scQ$ a pointed $\mathbb{Z}$-map.
	\begin{enumerate}
		\item\label{prop:dualmorph1} $\zeta$ is a monomorphism if and only if it is injective.
		\item\label{prop:dualmorph2} $\zeta$ is an epimorphism if and only if it is surjective.
		\item\label{prop:dualmorph3} $\zeta$ is strict if and only if $\W(\zeta)$ is a surjective homomorphism from $\W(Q)$ to $\W(P)$.
	\end{enumerate}
\end{proposition}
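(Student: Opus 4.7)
I would tackle the three parts separately, as (1) and (2) are routine applications of concreteness over $\mathsf{Set}$, while (3) carries the geometric content specific to the Wajsberg duality. For the ``if'' directions of (1) and (2): since composition in $\mathsf{P^+_{\zz}}$ is ordinary function composition, an injective pointed $\mathbb{Z}$-map is immediately left-cancellable (a monomorphism) and a surjective one is right-cancellable (an epimorphism).

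For the converses of (1) and (2), I would adapt the standard separation arguments from the Marra--Spada duality \cite{MarraSpada13} to the pointed setting. For (1), if $\zeta$ is not injective, choose distinct $x_1, x_2 \in \scP$ with $\zeta(x_1) = \zeta(x_2)$ (taking them rational by a density argument on the fiber, which is a rational subpolyhedron). Using the extension theory of $\mathbb{Z}$-maps on rational polyhedra \cite{Mundici11}, build two distinct pointed $\mathbb{Z}$-maps $\alpha_1, \alpha_2: \scR \to \scP$ from a suitably chosen pointed rational polyhedron $\scR$ --- for instance one containing a rational point $r \neq \uno_{\scR}$ --- so that $\alpha_1(r) = x_1$, $\alpha_2(r) = x_2$, and both send $\uno_{\scR}$ to $\uno_{\scP}$. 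Then $\zeta \alpha_1 = \zeta \alpha_2$ but $\alpha_1 \neq \alpha_2$, so $\zeta$ is not a monomorphism. For (2), if $y_0 \in \scQ \setminus \zeta(\scP)$, construct pointed $\mathbb{Z}$-maps $\beta_1, \beta_2: \scQ \to \scR$ that agree on $\zeta(\scP) \cup \{\uno_{\scQ}\}$ but differ at $y_0$, again by appealing to the extension theorem.

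For (3), both arguments are direct. ($\Rightarrow$) If $\zeta$ is strict, let $\eta: \zeta(\scP) \to \scP$ be its pointed $\mathbb{Z}$-map inverse. For any $g_{\rest \scP} \in \W(\scP)$ with $g \in \freeW(n)$, the composite $g \circ \eta: \zeta(\scP) \to [0,1]$ is a pointed $\mathbb{Z}$-map by Lemma \ref{lemma:pzmap}; extending it to a pointed $\mathbb{Z}$-map $\tilde g$ defined on all of $\scQ$ via the $\mathbb{Z}$-map extension theorem --- pointed because $\tilde g(\uno_{\scQ}) = g(\eta(\uno)) = g(\uno) = 1$ --- yields a Wajsberg function $\tilde g \in \W(\scQ)$ with $\W(\zeta)(\tilde g_{\rest \scQ}) = (\tilde g \circ \zeta)_{\rest \scP} = g_{\rest \scP}$, so $\W(\zeta)$ is surjective. ($\Leftarrow$) Suppose $\W(\zeta)$ is surjective. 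Each coordinate projection $\pi_i: \scP \to [0,1]$ is a Wajsberg function on $\scP$, so there exist $h_i \in \W(\scQ)$ with $h_i \circ \zeta = \pi_i$. The tuple $\eta := (h_1, \ldots, h_n): \scQ \to [0,1]^n$ is then a pointed $\mathbb{Z}$-map (Lemma \ref{lemma:pzmap}) satisfying $\eta \circ \zeta = \mathrm{id}_{\scP}$; in particular $\zeta$ is injective, $\zeta(\scP)$ is a pointed rational polyhedron (as the $\mathbb{Z}$-image of one, containing $\uno$), and $\eta$ co-restricted to $\zeta(\scP) \to \scP$ is its pointed $\mathbb{Z}$-map inverse, showing $\zeta$ is strict.

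The main obstacle is ensuring in the converses of (1) and (2) that the distinguishing $\mathbb{Z}$-maps can be chosen to respect $\uno$, so that the constructions stay inside $\mathsf{P^+_{\zz}}$. This is the pointed refinement of the Mundici extension theorem and works because, in each case, the function being extended already matches the required value at $\uno$ by hypothesis, so the pointed condition is preserved throughout the extension process.
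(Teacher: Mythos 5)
Your proof is correct. For parts (1) and (2) you take essentially the same route as the paper, which adapts the separation arguments of Cabrer \cite{Ca15} to the pointed setting: the ``if'' directions by concreteness, the converses by manufacturing two distinct pointed $\zz$-maps that $\zeta$ fails to cancel. The one point where your sketch is lighter than it should be is the choice of the test point $r$ in (1): a $\zz$-map can only send a rational point $v$ to a point whose denominator divides $\mathrm{den}(v)$, so $r$ must be chosen with $\mathrm{den}(x_1)\cdot\mathrm{den}(x_2)$ dividing $\mathrm{den}(r)$ --- the paper takes the two-point polyhedron $\{d,1\}$ with $d=1/(\mathrm{den}(x_1)\cdot\mathrm{den}(x_2))$; your ``suitably chosen $\scR$'' covers this, but that divisibility constraint is the actual content, not a generic extension step. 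For (2) your plan works because $\zeta(\scP)$ is a rational polyhedron and hence the one-set of a McNaughton function on $\scQ$, which supplies the second coordinate distinguishing $\beta_1$ from $\beta_2$; pointedness is automatic since $\uno_{\scQ}=\zeta(\uno_{\scP})\in\zeta(\scP)$. Where you genuinely diverge is part (3): the paper disposes of it in one line by combining the categorical equivalence of Theorem \ref{thmcateq} with Cabrer's corresponding result for MV-algebras, whereas you reprove it directly --- extending $g\circ\eta$ from $\zeta(\scP)$ to a Wajsberg function on the ambient cube for one implication, and assembling a left inverse $(h_1,\ldots,h_n)$ from preimages of the coordinate projections for the other. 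Both arguments are valid; yours is more self-contained and makes the pointedness bookkeeping explicit (in particular that the extended function takes value $1$ at $\uno$ because $\uno_{\scQ}$ already lies in $\zeta(\scP)$), at the cost of silently using the $\zz$-map extension property underlying Lemma \ref{lemma:pzmap}.
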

\begin{proof}
(1) and (2) can be shown analogously to \cite[Theorems 3.2]{Ca15}. 

%

The proof of (3) follows from Theorem \ref{thmcateq} and \cite[Theorem 3.5]{Ca15}.
\end{proof}

Let us end this section with some relevant examples of finitely presented Wajsberg hoops and their associated polyhedra. Let us call $\alg W_{n}$ the Wajsberg chain with $n$-elements.
\begin{example}\label{prop:wn}
It is easy to see that
$\alg W_{1}$ is isomorphic to $\W(\{1\})$, $\alg W_{2}$ is isomorphic to $\W(\{0, 1\})$, and for all $n \geq 3$, $\alg W_{n}$ is isomorphic to $\W(\{\frac{1}{n-1}, 1\})$ .

Let us first consider $\W(\{1\})$. Since all Wajsberg functions have value $1$ at $\one$, $\freeW(1)$ restricted to $\{1\}$ is the trivial algebra with one element $\alg W_1$.
If we now consider $\alg W_{2}$, observe that Wajsberg functions are either $0$ or $1$ at $0$, and can only have value $1$ in $1$. Thus, the free algebra of Wajsberg functions with one variable restricted to $\{0,1\}$ is isomorphic to the $0$-free reduct of the $2$-element Boolean algebra $\alg W_{2}$. 

The cases of $\W(\{\frac{1}{n-1}, 1\})$, for $n \geq 3$, can be shown by induction on the construction of the elements of the free algebra. $\freeW(1)$ is indeed generated by the projection function $\pi_{1}$ (that is the identity map), with the operations defined pointwise. Thus, the restriction of the 1-generated Wajsberg functions to $\{\frac{1}{n-1}, 1\}$ is a Wajsberg subhoop of $[0,1]$ generated by $\frac{1}{n-1}$ and $1$, which is isomorphic to $\alg W_{n}$.

Notice that in MV-algebras, on the other hand, the finite MV-chain of $n$ elements is isomorphic to $\M(\{\frac{1}{n-1}\})$.
\end{example}
It is well known (see \cite{AFM}) that the algebras $\alg W_n$ from the previous example are, up to isomorphisms, all the finite subdirectly irreducible Wajsberg hoops. 
In the next proposition we show that actually, they are exactly all of the finitely presented subdirectly irreducible Wajsberg hoops.
\begin{proposition}\label{prop:subdire}
Finitely presented subdirectly irreducible Wajsberg hoops are exactly 
the algebras $\alg W_n$, for $n \geq 1$.
\end{proposition}
\begin{proof}
First, it is well known that the algebras $\alg W_n$ are finite (hence finitely presented) subdirectly irreducible Wajsberg hoops, see for instance \cite{AFM}.

For the converse direction, we first recall that subdirectly irreducible Wajsberg hoops are totally ordered. Indeed, the prelinearity equation $(x \to y) \lor (y \to x) \approx 1$ holds in $\WH$ (see \cite{AglianoPanti1999}), which characterizes varieties of commutative and integral residuated lattices whose subdirectly irreducible algebras are totally ordered (see \cite{GJKO}). 
Now, by Theorem \ref{thm:finpresW}, a finitely presented Wajsberg hoop is an algebra isomorphic to $\W(\scP)$, for some $n \in \mathbb{N}$ and some pointed rational polyhedron $\scP \sse[0,1]^n$. Suppose that $\scP$ contains two different points $\x \neq {\bf y}$ of $[0,1]^{n}$, with $\x, {\bf y} \neq \one$. Then one can construct two piecewise linear functions with integer coefficients that map $\one$ to 1, but are incomparable with respect to the pointwise order when restricted to $\scP$. 
Such McNaughton functions can be constructed using Schauder's hats, we omit the technical details here (for details on Schauder's hats the reader can check \cite{CDM}).
Thus, if $\scP$ contains two points different from each other and from $\uno$, $\W(\scP)$ is not totally ordered, and thus it is not subdirectly irreducible. 

Thus necessarily $\scP$ has at most a pair of points $\{\x, \uno\}$, where $\x = (x_{1}, \ldots, x_{n}) \in [0,1]^{n} \cap \mathbb{Q}^n$ (and possibly $\x = \uno$). Since Wajsberg functions are necessarily $1$ at $\uno$, $\W(\scP)$ is (isomorphic to) the subalgebra of $[0,1]$ generated by the coordinates $x_{1}, \ldots, x_{n}$. If $\x = \uno$ then $\W(\scP) \cong \alg W_1$, and if $\x = \alg 0$ then $\W(\scP) \cong \alg W_2$.
Otherwise, writing each $x_i$ as an irreducible fraction $p_i / q_i$, and letting $m = {\rm lcm}(q_1, \ldots, q_n)$, it is easy to see that then $\W(\scP)$ is the finite chain generated by $1 / m$, $\alg W_{m+1}$ for $m \geq 2$. Notice that indeed bounded subalgebras of $[0,1]$ are either dense or isomorphic to some $\alg W_k$, $k \in \mathbb{N}$, see \cite[Proposition 3.5.3]{CDM}. This completes the proof.
\end{proof}
Notice that the analogous proof for MV-algebras shows that the finite MV-chains are exactly all of the finitely presented subdirectly irreducible MV-algebras.

Aside from the algebras $\alg W_n$, other relevant subdirectly irreducible Wajsberg hoops can be defined as follows. 
Let $\bf Z$ be the abelian $\ell$-group of the integers naturally ordered, then
we call $\bf Z \times_{\rm lex} \bf Z$ the totally ordered abelian group that has as domain the cartesian product of $\bf Z \times \bf Z$, the group operations are defined componentwise, and the ordering is the lexicographic ordering. Let $\alg C^{\omega}_{n}$ be the $0$-free reduct of $\Gamma({\bf Z} \times_{\rm lex} {\bf Z}, (n,0))$. Let $\alg C_{\omega}$ be the cancellative hoop defined as the free monoid over one generator $C_{\omega} = \{1 = a^0, a^1, a^2, a^3, \ldots\}$ ordered by $1 = a^0 > a^1 > a^2 > a^3 > \ldots$, and with operations defined as $a^m \cdot a^n = a^{m+n}, a^m \to a^n = a^{\max(n-m,0)}$. Then $\alg C_{\omega}, \alg C^{\omega}_{n}$ are subdirectly irreducible Wajsberg hoops that are not finitely presented, since the former is unbounded and the latter is not simple.

One can obtain other examples of finitely presented Wajsberg hoops using the analogue of \cite[Lemma 3.6]{Mundici11}, which can be proved in a completely analogous way. 
\begin{lemma}\label{lemma:subiso}
Let $f = (f_{1},\ldots, f_{k})$ be a $k$-tuple of Wajsberg functions in $\freeW(n)$, $\scP \sse [0, 1]^{n}$ a pointed rational polyhedron, and $\scQ = f(\scP) \sse[0,1]^k$. Then the subalgebra $\alg W$ of $\W(\scP)$ generated by $f_{1\rest \scP} \ldots f_{k \rest \scP}$ is isomorphic to $\W(\scQ)$.
\end{lemma}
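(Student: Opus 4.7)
My plan is to mimic the proof of the MV-algebraic analogue \cite[Lemma 3.6]{Mundici11}, keeping track of the pointed structure throughout. The map $f = (f_1,\ldots, f_k)$ is a pointed $\mathbb{Z}$-map from $\scP$ to $[0,1]^k$ by Lemma \ref{lemma:pzmap}, so my first task is to verify that $\scQ = f(\scP)$ is a pointed rational polyhedron in $[0,1]^k$: it contains $\uno = f(\uno)$ because each $f_i$ is a Wajsberg function and hence equal to $1$ at $\uno$, and it is a rational polyhedron because the image of a rational polyhedron under a $\mathbb{Z}$-map is again a rational polyhedron (a standard fact from \cite{Mundici11}). Thus $\W(\scQ)$ is well-defined under our conventions.

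Next I would define a candidate isomorphism $\Psi \colon \W(\scQ) \to \alg W$ by composition with $f$: for any $g \in \freeW(k)$, set
\[
\Psi(g_{\rest \scQ}) = (g \circ f)_{\rest \scP} = g(f_1,\ldots, f_k)_{\rest \scP}.
\]
Note that $g \circ f \in \freeW(n)$ (since Wajsberg functions are closed under composition, using that $g(\uno) = 1$ and $f_i(\uno) = 1$), so the right-hand side lies in $\W(\scP)$, and in fact in the subalgebra generated by the $f_{i\,\rest \scP}$.

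Well-definedness and injectivity of $\Psi$ both reduce to the identity $\scQ = f(\scP)$: for $g_1, g_2 \in \freeW(k)$,
\[
g_{1\,\rest \scQ} = g_{2\,\rest \scQ} \iff (\forall y \in f(\scP))\ g_1(y) = g_2(y) \iff (g_1 \circ f)_{\rest \scP} = (g_2 \circ f)_{\rest \scP}.
\]
That $\Psi$ is a homomorphism is immediate because operations in both $\freeW(k)$ and $\freeW(n)$ are defined pointwise and composition respects pointwise operations; it also sends $1_{\rest \scQ}$ to $1_{\rest \scP}$. For surjectivity, every element of $\alg W$ is of the form $t(f_{1\,\rest \scP}, \ldots, f_{k\,\rest \scP})$ for some Wajsberg term $t$, and applying pointwise-ness this equals $t(f_1,\ldots, f_k)_{\rest \scP} = \Psi(t(\pi_1,\ldots,\pi_k)_{\rest \scQ})$, where $\pi_1,\ldots,\pi_k$ are the projections generating $\freeW(k)$.

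The only nonroutine step is the first one, verifying that $\scQ$ is a pointed rational polyhedron; once that is in hand, the remainder of the argument is formally identical to Mundici's proof, with the pointedness of all maps and polyhedra being preserved automatically because $f(\uno) = \uno$. I would therefore present the proof in the compressed form above, citing the MV-algebraic version for the computation and emphasising only the pointed bookkeeping.
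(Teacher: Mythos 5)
Your proposal is correct and matches the paper's intent exactly: the paper gives no written proof, stating only that the lemma "can be proved in a completely analogous way" to Mundici's Lemma 3.6, and your argument is precisely that adaptation, with the only genuinely new content being the verification that $\scQ = f(\scP)$ is pointed (via $f(\uno) = \uno$) and that $g \circ f$ remains a Wajsberg function --- the same bookkeeping the paper itself performs when it reuses this construction in Lemma \ref{lemma:finpressub}.
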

\begin{example}\label{prop:intervals}
 Let $n \geq 1$, thus $\W([\frac{1}{n}, 1]) $, with $[\frac{1}{n}, 1] \sse [0,1]$, is isomorphic to the subalgebra of $\freeW(\{x\})$ generated by $x \to x^{n}$. This is a consequence of Lemma \ref{lemma:subiso}, since the range of $f(x) = x \to x^n$ is exactly $[\frac{1}{n}, 1]$, as depicted in the following Figure \ref{fig:ex}.
\begin{figure}[h]
\begin{center}
\begin{tikzpicture}
\draw [line width = 1pt] (0,0) -- (0,4) -- (4,4) -- (4,0)--(0,0);
\draw [line width = 1pt] (0,4) -- (3,1) --(4,4);
\draw [line width = 0.5pt, dotted] (0,1) -- (3,1);
\draw [line width = 0.5pt, dotted] (3,0) -- (3,1);
\fill (3,0) circle (0.05);
\fill (0,1) circle (0.05);
\node at (3, -0.4) {$\frac{n-1}{n}$};
\node at (-0.3, 1) {$\frac{1}{n}$};
\end{tikzpicture}
\end{center}
\caption{The function $f(x) = x \to x^n$ in $\freeW(1)$.}
\label{fig:ex}
\end{figure}

\end{example}

\section{Projective and exact Wajsberg hoops}
In this section we specialize the duality to two interesting classes of finitely presented Wajsberg hoops, that is, finitely generated projective algebras and exact algebras in $\WH$.

An algebra $\alg P$ is \emph{projective} in a class of algebras $\vv K$ if whenever there are $\alg A, \alg B \in \vv K$, a homomorphism $f: \alg P \to \alg A$ and a surjective homomorphism $g: \alg B \to \alg A$, then there exists $h: \alg P \to \alg B$ such that $f = g \circ h$. This algebraic notion of projectivity corresponds to a categorical notion (where surjective homomorphisms correspond to regular epimorphisms). Equivalently, in a variety of algebras $\vv V$, $\alg P$ is projective if and only if it is a retract of some free algebra $\free_{\vv V}(X)$. We recall that $\alg A$ is a retract of $\alg F$ if there are homomorphisms $i: \alg A \to \alg F$ and $j: \alg F \to \alg A$ such that $j \circ i = id_{\alg A}$. Notice that, necessarily, $i$ is injective and $j$ is surjective. 

An algebra $\alg E$ in a variety $\vv V$ is called \emph{exact} if it is (isomorphic to) a finitely generated subalgebra of a finitely generated free algebra. It follows that finitely generated projective algebras in a variety are a subclass of exact algebras.  

We start by showing that we can indeed use the duality for finitely presented Wajsberg hoops to study exact algebras, and therefore projective algebras, in $\WH$. The following is the analogue of \cite[Corollary 6.6]{Mundici11}, we write the proof here in reference to our previous results for the sake of the reader.
\begin{lemma}\label{lemma:finpressub}
Let $\alg B$ be a finitely generated subalgebra of a finitely presented Wajsberg hoop $\alg A$. Then $\alg B$ is finitely presented.
\end{lemma}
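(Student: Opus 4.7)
The plan is to reduce the statement to the polyhedral side via Theorem \ref{thm:finpresW} and then invoke Lemma \ref{lemma:subiso} to identify $\alg B$ with a Wajsberg hoop of the form $\W(\scQ)$.

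First, since $\alg A$ is finitely presented, by Theorem \ref{thm:finpresW} there is a pointed rational polyhedron $\scP \sse [0,1]^n$ with $\alg A \cong \W(\scP)$. By assumption $\alg B$ is generated inside $\W(\scP)$ by finitely many elements, which, by definition of $\W(\scP)$, can be written as $f_{1\rest\scP},\ldots,f_{k\rest\scP}$ for Wajsberg functions $f_1,\ldots,f_k \in \freeW(n)$. Setting $f = (f_1,\ldots,f_k)$ and $\scQ = f(\scP) \sse [0,1]^k$, Lemma \ref{lemma:subiso} gives an isomorphism $\alg B \cong \W(\scQ)$. Hence the task is reduced to verifying that $\scQ$ qualifies as a pointed rational polyhedron, after which Theorem \ref{thm:finpresW} applied in the other direction yields finite presentability of $\alg B$.

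For the pointedness of $\scQ$: every Wajsberg function satisfies $f_i(\uno) = 1$ (Proposition \ref{prop:pos}), so $f(\uno) = \uno$, and since $\uno \in \scP$ we have $\uno \in \scQ$. For the fact that $\scQ$ is a rational polyhedron, I would appeal to the standard fact that the image of a rational polyhedron under a $\Z$-map is again a rational polyhedron (this is a well-known property of piecewise linear maps with integer coefficients, documented in \cite{Mundici11}): indeed $f$ is a pointed $\Z$-map by Lemma \ref{lemma:pzmap}, so $f(\scP) \sse [0,1]^k$ is a rational polyhedron.

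The main obstacle is the last point: closure of rational polyhedra under images of $\Z$-maps is not proved in the excerpt and requires a citation of the relevant polyhedral geometry. Once this is granted, the argument is essentially a three-line chain, Theorem \ref{thm:finpresW} $\to$ Lemma \ref{lemma:subiso} $\to$ Theorem \ref{thm:finpresW}, with the bookkeeping that $\uno$ is preserved along the way because the generators are Wajsberg (not merely McNaughton) functions.
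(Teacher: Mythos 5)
Your proposal is correct and follows essentially the same route as the paper: represent $\alg A$ as $\W(\scP)$ via Theorem \ref{thm:finpresW}, observe that the tuple of generators is a pointed $\Z$-map whose image $\scQ$ is a pointed rational polyhedron (the paper cites \cite[Lemma 3.4]{Mundici11} for exactly the closure fact you flag), and conclude with Lemma \ref{lemma:subiso} and Theorem \ref{thm:finpresW}. The only cosmetic difference is that the paper justifies pointedness of $\scQ$ by noting it is the image of a pointed $\Z$-map, which is the same observation you make via Proposition \ref{prop:pos}.
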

\begin{proof}
Suppose $\alg A \cong \W(\scP)$, for $\scP$ some pointed polyhedron $\scP \sse [0,1]^{n}$. Suppose $\alg B$ is generated by $g_{1}\ldots g_{k} \in \W(\scP)$. Then $g = (g_{1}\ldots g_{k})$ is a pointed $\zz$-map by Lemma \ref{lemma:pzmap}, and its image $g(\scP) \sse [0,1]^{k}$ is a rational polyhedron (\cite[Lemma 3.4]{Mundici11}) and it is pointed because it is the image of a pointed $\zz$-map. By Lemma \ref{lemma:subiso}, $\alg B \cong \W(\scQ)$, thus by Theorem \ref{thm:finpresW} $\alg B$ is finitely presented.
\end{proof}
As an immediate consequence we get the next proposition.
\begin{proposition}\label{prop:projfinpres}
If a $n$-generated Wajsberg hoop is exact (or projective), then it is finitely presented, i.e. isomorphic to $\W(\scP)$ for some pointed rational polyhedron $\scP \sse [0,1]^{n}$.
\end{proposition}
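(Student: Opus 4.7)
The plan is to derive the statement as a fairly direct consequence of Lemma \ref{lemma:subiso} and Theorem \ref{thm:finpresW}, after first reducing the projective case to the exact case. Since every projective algebra in a variety is a retract of a free algebra, and since an $n$-generated algebra $\alg A$ is a quotient of $\freeW(n)$ via some surjection $q: \freeW(n) \tla \alg A$, projectivity of $\alg A$ allows $q$ to be split. Thus $\alg A$ is a retract (in particular, a finitely generated subalgebra) of $\freeW(n)$, and hence exact. It therefore suffices to treat the exact case.

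Assume $\alg A$ is an $n$-generated exact Wajsberg hoop, so that $\alg A$ is a subalgebra of $\freeW(m)$ for some $m\in\N$, generated by some $n$-tuple $a_1,\ldots,a_n \in \freeW(m)$. I would first identify $\freeW(m)$ with $\W([0,1]^m)$: the cube $[0,1]^m$ is a pointed rational polyhedron (it trivially contains $\one$), and $\W([0,1]^m)=\{f_{\rest[0,1]^m}: f\in\freeW(m)\}$ is nothing other than $\freeW(m)$ itself, since restriction to the whole cube is the identity.

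Next I would apply Lemma \ref{lemma:subiso} with $\scP=[0,1]^m$ and $f=(a_1,\ldots,a_n)$. Each $a_i$ is a Wajsberg function, so $a_i(\one)=1$ and hence $f(\one)=\one$; by Lemma \ref{lemma:pzmap} the map $f$ is a pointed $\Z$-map from $[0,1]^m$ to $[0,1]^n$. The image $\scP:=f([0,1]^m)\sse[0,1]^n$ is a rational polyhedron (by \cite[Lemma 3.4]{Mundici11}), and it is pointed because $\one = f(\one)\in\scP$. Lemma \ref{lemma:subiso} then yields an isomorphism $\alg A \cong \W(\scP)$, and Theorem \ref{thm:finpresW} confirms that $\alg A$ is finitely presented in precisely the required form.

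The only mildly delicate point — and the reason one cannot simply cite Lemma \ref{lemma:finpressub} as a black box — is matching the dimension of the ambient cube to the number $n$ of generators, rather than landing in $[0,1]^m$ or some other $[0,1]^k$. The tuple construction $f=(a_1,\ldots,a_n)$ is exactly what arranges this: it transports the $n$ generators into coordinate functions on $[0,1]^n$, and the positivity of each $a_i$ ensures that the resulting polyhedron is pointed.
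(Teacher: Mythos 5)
Your proof is correct and follows essentially the same route as the paper, which obtains the proposition as an immediate consequence of Lemma \ref{lemma:finpressub}; your argument simply inlines the proof of that lemma in the special case where the ambient algebra is $\freeW(m)\cong\W([0,1]^m)$, using the tuple of generators as a pointed $\Z$-map together with Lemma \ref{lemma:subiso}. Your explicit reduction of the projective case to the exact case and your attention to landing in $[0,1]^{n}$ with $n$ the number of generators are both sound and match what the paper's cited lemma already delivers.
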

The fact that in a variety being projective is equivalent to being a retract of a free algebra (see \cite{Gh97}), applying the duality of Theorem \ref{thm:duality}, allows us to get the following analogue of \cite[Theorem 1.2]{CM09}. 
Let us call a \emph{pointed $\zz$-retraction} a $\zz$-map $\zeta: [0,1]^{n} \to [0,1]^{n}$ such that  $\zeta^{2} = \zeta$. We call the image of a $\zz$-retraction $\zeta$, $\zeta([0,1]^{n})$, a \emph{pointed $\zz$-retract} of $[0,1]^{n}$. The following is the analogue of \cite[Theorem 1.2]{CM09} and \cite[Corollary 3.6]{MarraSpada13}, and it can be proved in a completely analogous way to the latter as a consequence of the duality in Theorem \ref{thm:duality}.
\begin{theorem}
$\alg A$ is an $n$-generated projective Wajsberg hoop if and only if it is isomorphic to $\W(\scP)$ for a pointed $\zz$-retract $\scP$ of $[0,1]^{n}$.
\end{theorem}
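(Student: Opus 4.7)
The plan is to adapt the classical argument that a finitely generated algebra in a variety is projective precisely when it is a retract of a finitely generated free algebra, and to transport this condition through the duality of Theorem~\ref{thm:duality}, in the manner of \cite{CM09,MarraSpada13}. The key preliminary observation is that under the duality $\W$, the free Wajsberg hoop $\freeW(n)$ corresponds to the cube $[0,1]^{n}$ itself, since by definition \eqref{eq:WP} we have $\W([0,1]^{n}) = \freeW(n)$.

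For the forward direction, suppose $\alg A$ is an $n$-generated projective Wajsberg hoop. By Proposition~\ref{prop:projfinpres}, $\alg A \cong \W(\scP)$ for some pointed rational polyhedron $\scP \sse [0,1]^n$, and projectivity yields homomorphisms $i:\alg A \to \freeW(n)$ and $j:\freeW(n) \to \alg A$ with $j \circ i = \mathrm{id}_{\alg A}$. Because $\W$ is a contravariant equivalence, these arise from pointed $\Z$-maps $\eta:[0,1]^n \to \scP$ (dual to $i$) and $\iota:\scP \to [0,1]^n$ (dual to $j$) satisfying $\eta \circ \iota = \mathrm{id}_\scP$, by contravariance and faithfulness of $\W$. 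Setting $\zeta := \iota \circ \eta:[0,1]^n \to [0,1]^n$, a brief computation gives $\zeta^2 = \iota \circ (\eta \circ \iota) \circ \eta = \zeta$, so $\zeta$ is a pointed $\Z$-retraction. Since $\iota$ is injective (any left-invertible map is), the image $\zeta([0,1]^n) = \iota(\scP)$ is $\Z$-homeomorphic to $\scP$ in $\mathsf{P^+_{\zz}}$, and therefore $\alg A \cong \W(\scP) \cong \W(\iota(\scP))$, exhibiting $\alg A$ as $\W$ of a pointed $\Z$-retract of $[0,1]^n$.

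For the converse, given a pointed $\Z$-retract $\scP = \zeta([0,1]^n)$ with $\zeta^2 = \zeta$, I would take the inclusion $\iota:\scP \hookrightarrow [0,1]^n$ (a pointed $\Z$-map since it is the restriction of the identity on the cube, which sends $\uno$ to $\uno$) and let $r:[0,1]^n \to \scP$ be the co-restriction of $\zeta$ to its image (pointed $\Z$-map by construction). The idempotence $\zeta^2=\zeta$ gives $r\circ\iota = \mathrm{id}_\scP$, and applying $\W$ yields $\W(\iota)\circ \W(r) = \mathrm{id}_{\W(\scP)}$, so $\W(\scP)$ is a retract of $\freeW(n)$ and hence projective. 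The argument is routine once the duality is in place; the only care required is to verify that all maps invoked are legitimate pointed $\Z$-maps, which is immediate from the stability of this class under composition, restriction, and co-restriction to images.
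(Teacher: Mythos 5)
Your proof is correct and follows exactly the route the paper indicates: characterize projectivity as being a retract of $\freeW(n)$, then transport the splitting through the contravariant equivalence of Theorem~\ref{thm:duality} to obtain an idempotent pointed $\Z$-map of the cube, precisely as in the cited arguments of Cabrer--Mundici and Marra--Spada. The paper leaves this as "completely analogous" to those references, so your write-up simply supplies the details it omits; no discrepancy to report.
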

The geometrical study of $\zz$-retracts has been carried out in the papers \cite{CM09,CM12,Ca15}. In particular, necessary conditions for a polyhedron are obtained, that yield a full characterization for the $1$-dimensional case. 

In order to study the analogue of such characterization we need to introduce the notions of contractibility and strong regularity. 
We say that a polyhedron is \emph{contractible} if it is homotopically equivalent to a point. Strong regularity is usually defined as a property of triangulations (see \cite{Mundici11}), however Cabrer shows an intrinsic characterization (\cite[Theorem 4.4]{Ca15}) that we follow here: a polyhedron $\scP$ is \emph{strongly regular} if and only if for each rational vector $v \in \scP$ and each $0 < \delta \in \mathbb{R}$, there exists a rational vector $w \in P$ such that the Euclidean distance ${\rm dist}(v,w) < \delta$, and, calling ${\rm den}(x)$ the least common denominator of the coordinates of a point $x$, ${\rm den}(v)$ and ${\rm den}(w)$ are coprime (in the sense that ${\rm gcd}({\rm den}(v), {\rm den}(w)) = 1$).

Following \cite[Corollary 4.4, Theorem 3.5]{CM12}, if a rational polyhedron $\scP \sse [0,1]^n$ is a $\Z$-retract then it is: contractible, strongly regular, and it contains a point with integer coordinates. In the one-dimensional case the converse also holds.
Since pointed polyhedra always include a vertex of $[0,1]^n$, we get the following.
\begin{theorem}
	If $\scP$ is a pointed $\mathbb{Z}$-retract of $[0, 1]^n$ then
\begin{enumerate}
	\item $\scP$ is contractible,
	\item $\scP$ is strongly regular.
\end{enumerate}
If $\scP$ is one-dimensional, the converse also holds.
\end{theorem}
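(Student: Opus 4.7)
The plan is to deduce both parts directly from the Cabrer--Mundici characterization of $\Z$-retracts of $[0,1]^n$ that was recalled immediately before the statement, using the fact that pointedness automatically supplies the ``integer-point'' hypothesis appearing in that characterization.

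For the forward direction, I would simply observe that a pointed $\Z$-retract of $[0,1]^n$ is \emph{a fortiori} a $\Z$-retract of $[0,1]^n$: the defining pointed $\Z$-retraction $\zeta : [0,1]^n \to [0,1]^n$ is, after forgetting the condition $\zeta(\one)=\one$, a $\Z$-map with $\zeta^2=\zeta$. The Cabrer--Mundici result (\cite[Corollary 4.4]{CM12}) then yields that $\scP$ is contractible and strongly regular (the third conclusion, namely that $\scP$ contains an integer point, is automatic for a pointed polyhedron since $\one\in\scP$).

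For the converse in the one-dimensional case, suppose $\scP\sse[0,1]^n$ is pointed, one-dimensional, contractible, and strongly regular. Since $\scP$ is pointed we have $\one\in\scP$, so $\scP$ contains a point with integer coordinates. The one-dimensional half of \cite[Theorem 3.5]{CM12} then furnishes a $\Z$-retraction $\zeta:[0,1]^n\to[0,1]^n$ whose image is $\scP$. The key remaining point is that this retraction can be taken to be pointed. But this is automatic: since $\zeta$ is a retraction onto $\scP$, it fixes $\scP$ pointwise, and in particular $\zeta(\one)=\one$ because $\one\in\scP$. Thus $\zeta$ is a pointed $\Z$-map, so $\scP$ is a pointed $\Z$-retract of $[0,1]^n$.

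There is no real obstacle in the argument: the substantive geometric work (the characterization of $\Z$-retracts via contractibility, strong regularity and the presence of an integer point, and the converse in dimension one) lives in \cite{CM09,CM12,Ca15} and has been imported just before the statement. The only thing one needs to notice is the small but crucial observation that pointed polyhedra come equipped, by definition, with the distinguished integer point $\one$, and that any retraction onto such a polyhedron automatically fixes $\one$, so the ``pointed'' adjective is inherited for free from the retraction onto the pointed polyhedron.
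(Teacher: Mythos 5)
Your proof is correct and follows essentially the same route as the paper, which likewise derives both directions directly from the Cabrer--Mundici characterization in \cite{CM12}, with pointedness supplying the integer point $\one\in\scP$ for free. Your explicit remark that any $\Z$-retraction onto $\scP$ fixes $\scP$ pointwise (hence fixes $\one$, so the retraction is automatically pointed) is a small detail the paper leaves implicit, and it is a worthwhile addition rather than a deviation.
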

\begin{proof}
	The proof is a direct consequence of \cite[Theorem 3.5]{CM12}.
\end{proof}
We can then see geometrically that the trivial algebra is projective, which is not surprising since it is the free algebra over the empty set of generators.  Other projective Wajsberg hoops are the algebras in Example \ref{prop:intervals}.

Notice that contractibility implies connectedness, and the polyhedra associated to the algebras $\alg W_{n}$  for $n \geq 2$ are not connected as seen in Example \ref{prop:wn}. 
Since we recall that $\alg C_{\omega}, \alg C^{\omega}_{n}$ are not finitely presented, we obtain the following consequences.
\begin{corollary}
The subdirectly irreducible Wajsberg hoops $\alg C_{\omega}, \alg C^{\omega}_{n}$, $\alg W_{n}$  for $n \geq 2$ are not projective in $\WH$, nor in any variety of residuated lattices that contains $\WH$. 
\end{corollary}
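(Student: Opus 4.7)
The plan is to split the algebras into two groups and attack each with a different tool built earlier in the section.

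For $\alg C_{\omega}$ and $\alg C^{\omega}_{n}$, I would invoke Proposition \ref{prop:projfinpres}: any finitely generated projective Wajsberg hoop must be finitely presented. Both of these algebras are finitely generated (one generator for $\alg C_{\omega}$; for $\alg C^{\omega}_{n}$ by the standard description of $\Gamma(\mathbf{Z}\times_{\rm lex}\mathbf{Z},(n,0))$), but the paper has already noted that they fail to be finitely presented (the former is unbounded, the latter is not simple). So projectivity in $\WH$ is immediately ruled out by the contrapositive.

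For $\alg W_{n}$ with $n \geq 2$, I would use the geometric side of the duality. By Example \ref{prop:wn}, $\alg W_{n}$ is finitely presented and isomorphic to $\W(\scP)$ where $\scP$ is a two-point pointed rational polyhedron in $[0,1]$ (namely $\{0,1\}$ for $n=2$, and $\{\tfrac{1}{n-1},1\}$ for $n \geq 3$). By Theorem~4.4 (the characterization of $n$-generated projectives as $\W(\scP)$ for $\scP$ a pointed $\Z$-retract) together with the preceding theorem asserting that pointed $\Z$-retracts are contractible, it suffices to observe that $\scP$ is disconnected and hence not contractible. This rules out $\alg W_{n}$ from being projective in $\WH$.

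To extend the statement from $\WH$ to any variety $\vv V$ of residuated lattices that contains $\WH$, I would argue directly from the abstract definition of projectivity: if $\alg P \in \WH \subseteq \vv V$ were projective in $\vv V$, then the universal lifting property against surjections in $\vv V$ would specialize to surjections between algebras in $\WH$, giving projectivity in $\WH$ as well. Contrapositively, the failures established above transfer to every such $\vv V$.

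The main obstacle, such as it is, is a minor bookkeeping one: making sure that $\alg C^{\omega}_{n}$ is indeed finitely generated so that Proposition \ref{prop:projfinpres} applies, and that the polyhedra $\{0,1\}$ and $\{\tfrac{1}{n-1},1\}$ are genuinely disconnected in $[0,1]$ for all relevant $n$. Neither step requires new ideas, and the contractibility-from-retracts implication is exactly the tool put in place by the preceding theorem, so the corollary is essentially a direct application of the machinery assembled in this section.
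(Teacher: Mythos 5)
Your proposal is correct and follows essentially the same route as the paper, which derives the corollary from exactly the two observations you use: $\alg C_{\omega}$ and $\alg C^{\omega}_{n}$ are finitely generated but not finitely presented (being, respectively, unbounded and non-simple subdirectly irreducibles), so Proposition \ref{prop:projfinpres} rules them out, while the dual polyhedra of the $\alg W_n$ ($n\geq 2$) are disconnected, hence not contractible, hence not pointed $\Z$-retracts. The transfer to any larger variety via the standard restriction of the lifting property is also how the paper implicitly justifies the "nor in any variety of residuated lattices containing $\WH$" clause.
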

In particular then the $2$-element residuated lattice $\alg W_{2}$ is not projective in $\mathsf{RL}$,
while its bounded version, the $2$-element Boolean algebra, is projective in every subvariety of $\mathsf{FL}$. Indeed, it corresponds to the free algebra on the empty set of generators. 

We can actually obtain stronger negative results. Let us call a Wajsberg hoop $\alg A$ \emph{weakly projective} if whenever it is a homomorphic image of another Wajsberg hoop $\alg B$, it is also isomorphic to one if its subalgebras. 
This notion of weak projectivity is studied in relation to the study of the lattices of subquasivarieties \cite{Gor}.
Clearly, being projective implies being weakly projective, but the opposite is in general not true. Notice that a weakly projective Wajsberg hoop is finitely presented by Lemma \ref{lemma:finpressub}.
\begin{proposition}\label{lemma:weakproj}
Nontrivial finitely generated weakly projective Wajsberg hoops are unbounded.
\end{proposition}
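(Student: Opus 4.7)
The plan is to translate the hypothesis into the geometric side via the duality of Theorem \ref{thm:duality} and show that weak projectivity forces the dual polyhedron of $\alg A$ to be connected, while boundedness of $\alg A$ forces $\uno$ to be an isolated point of that polyhedron, which is incompatible with nontriviality. Suppose $\alg A$ is a nontrivial, $n$-generated, weakly projective Wajsberg hoop. Since $\alg A$ is an $n$-generated quotient of $\freeW(n)$, weak projectivity supplies an embedding $\iota : \alg A \hookrightarrow \freeW(n)$. Then $\alg A$ is a finitely generated subalgebra of the finitely presented algebra $\freeW(n) \cong \W([0,1]^n)$, so by Lemma \ref{lemma:finpressub} it is itself finitely presented, and Lemma \ref{lemma:subiso} applied to the tuple $g = (\iota(x_1), \ldots, \iota(x_n))$ of images of a set of generators of $\alg A$ yields $\alg A \cong \W(\scP)$ with $\scP = g([0,1]^n) \sse [0,1]^n$. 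The pointed $\Z$-map $g$ is continuous by Lemma \ref{lemma:pzmap}, so $\scP$ is a connected pointed rational polyhedron, and by nontriviality of $\alg A$ we also have $\scP \neq \{\uno\}$.

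Next I will assume for contradiction that $\alg A$ is bounded and exhibit $\uno$ as an isolated point of $\scP$. If $\alg A$ is bounded, then $\W(\scP)$ has a minimum element $m = \tilde m_{\rest \scP}$ for some $\tilde m \in \freeW(n)$; since $\tilde m$ is a Wajsberg function, $m(\uno) = 1$. For every $x = (x_1, \ldots, x_n) \in \scP \setminus \{\uno\}$, at least one coordinate $x_i$ is strictly less than $1$, and the Wajsberg function obtained as the $k$-fold $\WH$-product of the $i$-th projection evaluates at $x$ to $\max(0,\, 1 - k(1 - x_i))$, which vanishes once $k \geq 1/(1-x_i)$. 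Minimality therefore forces $m(x) = 0$. Hence $m$ takes value $1$ at $\uno$ and $0$ everywhere else on $\scP$. By continuity of $m$ (as a restriction of a McNaughton function), the preimage $m^{-1}((0,1]) = \{\uno\}$ is open in $\scP$, making $\{\uno\}$ a nonempty proper clopen subset of $\scP$ and contradicting its connectedness.

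The main content of the argument lies in the second paragraph: the observation that there are enough Wajsberg functions (powers of projections) to force the putative minimum element to vanish at every non-$\uno$ point of $\scP$, so that continuity compels $\uno$ to be isolated. Everything else is routine once set up: the first paragraph is pure machine-work with the duality and the lemmas already established in the paper, and the closing topological step is standard.
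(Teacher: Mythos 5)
Your proof is correct, and it reaches the conclusion by a genuinely different (more geometric) route than the paper's. Both arguments start the same way: weak projectivity plus $n$-generation yields an embedding of $\alg A$ into $\freeW(n)$. The paper then finishes in one stroke, entirely inside $\freeW(n)$ and with no polyhedral machinery: the bottom element of a bounded hoop is idempotent, and the only idempotent of $\freeW(n)$ is the constant function $1$ (an idempotent McNaughton function is continuous, $\{0,1\}$-valued on the connected cube $[0,1]^n$, and equal to $1$ at $\uno$, hence constantly $1$), so a nontrivial bounded algebra has nowhere to send its bottom. You instead push $\alg A$ through Lemma \ref{lemma:subiso} to realize it as $\W(\scP)$ on the connected image polyhedron $\scP = g([0,1]^n)$, and then use domination by powers of the projections to force a hypothetical minimum to be the indicator function of $\{\uno\}$, contradicting connectedness of $\scP$. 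The engine is the same in both cases --- continuity plus connectedness rules out a nonconstant two-valued element, and your powers-of-projections computation is essentially the pointwise manifestation of the idempotency of the bottom --- but the trade-off is real: the paper's argument is shorter and needs nothing beyond the description of $\freeW(n)$ as an algebra of continuous functions, while yours costs the extra appeals to Lemmas \ref{lemma:subiso} and \ref{lemma:pzmap} and buys a sharper geometric statement, namely that $\W(\scP)$ is unbounded for every connected pointed rational polyhedron $\scP \neq \{\uno\}$. All steps check out, including the identification of the order on $\W(\scP)$ with the pointwise order and the clopen-set argument at the end.
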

\begin{proof}
Let $\alg A$ be $n$-generated and weakly projective in $\WH$. Then $\alg A$ is a homomorphic image of $\freeW(n)$, and thus isomorphic to a subalgebra of $\freeW(n)$. Since the operations in the free algebra are defined pointwise, and all functions in $\freeW(n)$ are continuous, it follows that the only idempotent element in $\freeW(n)$ is the function constantly equal to $1$. Thus a bounded algebra with more than one element cannot be embedded in $\freeW(n)$, since there is no element in which to map its (necessarily idempotent) lower bound. Thus $\alg A$ is either trivial or unbounded.
\end{proof}
\begin{corollary}\label{cor:unbound}
Nontrivial finitely generated projective Wajsberg hoops are unbounded.
\end{corollary}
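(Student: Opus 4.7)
The plan is to obtain Corollary \ref{cor:unbound} as an immediate consequence of Proposition \ref{lemma:weakproj}, once we observe that every projective algebra (in any variety, and in particular in $\WH$) is weakly projective in the sense just defined. This is a purely categorical fact, so the geometry of polyhedra plays no further role here; all the real work has already been done in Proposition \ref{lemma:weakproj}.

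More concretely, I would argue as follows. Suppose $\alg A$ is a nontrivial finitely generated projective Wajsberg hoop and that $g\colon \alg B \twoheadrightarrow \alg A$ is any surjective homomorphism in $\WH$. Applying the definition of projectivity to the identity homomorphism $\op{id}_{\alg A}\colon \alg A \to \alg A$ and the surjection $g$, we get a homomorphism $h\colon \alg A \to \alg B$ with $g \circ h = \op{id}_{\alg A}$. Because $g \circ h$ is injective, $h$ itself must be injective, and hence $\alg A$ is isomorphic to a subalgebra of $\alg B$. This is precisely the defining property of a weakly projective algebra, so $\alg A$ is weakly projective. Proposition \ref{lemma:weakproj} then delivers unboundedness at once.

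I do not expect any genuine obstacle in this proof: the only step that requires verification is the standard implication ``projective $\Rightarrow$ weakly projective'', which is a one-line diagram chase. As a sanity check one can also argue directly from the retract characterization of projectivity: a finitely generated projective $\alg A$ embeds into some $\freeW(n)$ via the section of a retraction, and the argument in Proposition \ref{lemma:weakproj}, which shows that the only idempotent of $\freeW(n)$ is the constant function $1$ (so a nontrivial bounded subalgebra cannot fit inside $\freeW(n)$), applies verbatim.
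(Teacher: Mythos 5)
Your proof is correct and matches the paper's route exactly: the corollary is drawn from Proposition \ref{lemma:weakproj} via the standard implication that projective algebras are weakly projective (the paper states this implication without proof, and your diagram chase with the identity map and the section $h$ is the intended justification). No issues.
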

Thus no finitely generated positive reduct of an MV-algebra is projective in the variety of Wajsberg hoops. 

We now turn our attention to the larger class of exact algebras, that can be characterized geometrically via the results in \cite{Ca15} and the duality of Theorem \ref{thm:duality}.
\begin{theorem}\label{thm:exact}
	A finitely generated Wajsberg hoop is exact if and only if it is isomorphic to $\W(\scP)$, for a pointed rational polyhedron $\scP \sse [0,1]^n$, for some $n \in \mathbb{N}$, such that:
	\begin{itemize}
		\item $\scP$ is connected;
		\item $\scP$ is strongly regular.
	\end{itemize}
\end{theorem}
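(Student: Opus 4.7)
The strategy is to translate exactness into a purely geometric surjectivity property via Theorem \ref{thm:duality}, and then leverage Cabrer's analogous characterization of exact MV-algebras \cite[Theorem 4.4]{Ca15}, adapting it to the pointed setting. By Proposition \ref{prop:projfinpres}, every $n$-generated exact Wajsberg hoop $\alg A$ is finitely presented, hence isomorphic to $\W(\scP)$ for some pointed rational polyhedron $\scP \sse [0,1]^n$, so we may restrict attention to such algebras. By definition, $\W(\scP)$ is exact iff it embeds as a subalgebra of $\W([0,1]^m) = \freeW(m)$ for some $m\in\nn$. Combining the duality of Theorem \ref{thm:duality} with Proposition \ref{prop:dualmorph}(\ref{prop:dualmorph2}) (injective homomorphisms dualise to surjective pointed $\zz$-maps), this is equivalent to the geometric statement that $\scP$ admits a surjective pointed $\zz$-map from some cube $[0,1]^m$.

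The necessity of connectedness and strong regularity then follows by a direct adaptation of Cabrer's MV-algebraic argument. If $\zeta:[0,1]^m \twoheadrightarrow \scP$ is a surjective pointed $\zz$-map, then $\scP$ is connected as the continuous image of a connected space. For strong regularity, every rational $v\in\scP$ admits a rational preimage $x\in[0,1]^m$, and using the standard divisibility fact that ${\rm den}(\zeta(x))$ divides ${\rm den}(x)$, together with the density in $[0,1]^m$ of rationals of any prescribed denominator, one perturbs $x$ to obtain nearby rationals whose images witness the coprime-denominator condition exactly as in the argument of \cite[Theorem 4.4]{Ca15}.

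The sufficiency direction is the main obstacle. Given a pointed, connected, strongly regular $\scP$, Cabrer's theorem produces \emph{some} surjective $\zz$-map $\lambda:[0,1]^m \to \scP$, but not necessarily a pointed one; moreover, no composition trick can rectify this, since any pointed $\zz$-map out of $[0,1]^m$ is already forced to send $\uno_m$ to the distinguished vertex of its codomain, so one cannot pre- or post-compose $\lambda$ with pointed $\zz$-maps to alter its value at $\uno_m$. The plan is to revisit Cabrer's construction itself: his proof builds $\lambda$ from a strongly regular triangulation $\Sigma$ of $\scP$ by prescribing its values on the vertices of a unimodular triangulation of $[0,1]^m$ and extending linearly on each simplex. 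Since $\uno_\scP$ is a rational vertex of $[0,1]^n$ lying in $\scP$, it may be taken as a vertex of $\Sigma$, and since $\uno_m$ is a vertex of the canonical unimodular triangulation of $[0,1]^m$, the vertex assignment may be adjusted so that $\uno_m \mapsto \uno_\scP$; the resulting $\lambda$ is then a surjective pointed $\zz$-map, completing the proof.
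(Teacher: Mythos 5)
Your proposal is essentially correct and follows the same architecture as the paper's proof: both reduce exactness, via Theorem \ref{thm:duality} and Proposition \ref{prop:dualmorph}, to the existence of a surjective pointed $\Z$-map from some cube onto $\scP$, and then lean on Cabrer's characterization. For necessity you re-derive connectedness and strong regularity of the image directly (continuous image of a connected set; the divisibility ${\rm den}(\zeta(x)) \mid {\rm den}(x)$ together with the density of rationals of prescribed denominator), whereas the paper simply cites \cite[Lemma 4.12]{Ca15}; your version is more self-contained and the sketch is sound. For sufficiency, however, your claim that ``no composition trick can rectify this'' is too strong: it only rules out composing with \emph{pointed} $\Z$-maps. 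The paper's actual fix is to pre-compose Cabrer's surjection $\zeta:[0,1]^l \to \scP$ with a $\Z$-symmetry $\rho$ of the cube sending $\one$ to a vertex ${\bf v}$ with $\zeta({\bf v})=\one$; $\rho$ itself is not pointed, but the composite is. This does require that \emph{some} vertex of the cube be sent to $\one\in\scP$ (false for an arbitrary surjective $\Z$-map, e.g.\ a tent map), which is exactly the same fact about the internals of Cabrer's construction that your adjusted vertex assignment relies on; the two fixes are therefore equivalent in substance and at the same level of rigor.
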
  
\begin{proof}
Suppose $\alg A$ is a finitely generated exact Wajsberg hoop. Then by definition, $\alg A$ is isomorphic to a finitely generated subalgebra of a (finitely generated) free algebra. By Lemma \ref{lemma:finpressub} $\alg A$ is finitely presented, thus it is isomorphic to $\W(\scP)$ for some pointed rational polyhedron $\scP \sse [0,1]^n$ (Theorem \ref{thm:finpresW}). Via Theorem \ref{thm:duality} and Proposition \ref{prop:dualmorph}, since $\alg A$ embeds into a finitely generated free algebra, there is an onto pointed $\mathbb{Z}$-map from $[0,1]^n$ to $\scP$, for some $n \in \mathbb{N}$. By \cite[Lemma 4.12]{Ca15}, this implies that $\scP$ is connected and strongly regular. 

Vice versa, suppose $\alg A$ is isomorphic to $\W(\scP)$, and $\scP \sse [0,1]^n$ is connected and strongly regular. By \cite[Lemma 4.12]{Ca15}, there is an onto $\mathbb{Z}$-map $\zeta: [0,1]^l \to \scP$ for some $l\in \mathbb{N}$. This  implies that there is also a pointed $\mathbb{Z}$-map from $[0,1]^l$ to $\scP$. Indeed either $\zeta(\one) = \one$, and therefore $\zeta$ is a pointed $\mathbb{Z}$-map, or it suffices to consider $\rho \circ \zeta$, where $\rho$ is the $\mathbb{Z}$-map that rotates the cube and maps $\one$ to the vertex ${\bf v} \in [0,1]^l$ such that $\zeta(\bf v) = \one \in \scP$, thus $\rho \circ \zeta$ is a pointed $\mathbb{Z}$-map. For instance, considering the onto $\Z$-map arising from \cite[Lemma 4.12]{Ca15}, such ${\bf v}$ would be $e_1 = (1, 0, \ldots, 0) \in [0,1]^l$.

Therefore, by the duality in Theorem \ref{thm:duality} and Proposition \ref{prop:dualmorph}, there is an embedding from $\alg A$ to $\freeW(l)$, and thus $\alg A$ is an exact Wajsberg hoop.
\end{proof}

\section{Applications to logic}
We will now use the results we obtained in the previous sections to investigate some interesting analogies and differences between MV-algebras and Wajsberg hoops, or, equivalently, between \luk\ logic and its positive fragment $\lu$. More precisely, we consider the infinite-valued \luk\ logic \textbf{\L} seen as an axiomatic extension of the Full Lambek Calculus with exchange \textbf{FL$_e$}, thus in the language with constants $0, 1$ and binary connectives $\land, \lor, \cdot, \to$ (see \cite{GJKO} for details), and $\lu$ is its $0$-free fragment. If, as usual, we define negation by means of the constant $0$, $\neg x := x \to 0$, notice that $\lu$ is also negation-free.
 
We shall see that, while deducibility in $\WH$ can be reduced to deducibility among positive terms in $\MV$, the same does not hold for admissibility of rules. Moreover, we will prove that while the unification type is the same in $\WH$ and $\MV$, the exact unification type is not. 

We start by showing that quasiequations in Wajsberg hoops can be studied geometrically.
\begin{proposition}\label{prop:geomded}
	Let $t, u$ be terms in the language of Wajsberg hoops over $n$ variables. The following are equivalent:
	\begin{enumerate}
		\item $\WH \models (t \approx 1) \Rightarrow (u \approx 1)$
		\item $O_t \sse O_u$ in $\freeW(n)$
		\item $\MV \models (t \approx 1) \Rightarrow (u \approx 1)$
	\end{enumerate}
\end{proposition}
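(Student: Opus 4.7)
The plan is to prove (1)$\Leftrightarrow$(2) via the ideal-determined/filter machinery together with Lemma \ref{lemma:fil-oneset}, and then handle (2)$\Leftrightarrow$(3) by observing that positive terms have the same one-sets in $\freeMV(n)$ and $\freeW(n)$ and invoking the MV-algebraic analogue of the same geometric lemma.

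First I would show that (1) is equivalent to $u\in\Fil_{\freeW(n)}(t)$, using the standard deduction/congruence-filter correspondence available for ideal-determined varieties (as recalled in Section \ref{sec:prel}). The forward direction uses the generic quotient $\freeW(n)/\theta_{\Fil(t)}$, in which the image of $t$ is $1$; by (1) the image of $u$ must also be $1$, hence $u\in\Fil_{\freeW(n)}(t)$. The converse direction uses the explicit description of the principal filter, $\Fil_{\freeW(n)}(t)=\{b:t^k\le b\text{ for some }k\in\mathbb{N}\}$, which yields a term inequality $t^k\le u$ valid in $\WH$, and from this any assignment sending $t$ to $1$ sends $u$ to $1$. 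Combining with Lemma \ref{lemma:fil-oneset} immediately gives (1)$\Leftrightarrow$(2).

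Next I would deal with (2)$\Leftrightarrow$(3). The key observation is that $t$ and $u$ are positive terms, so the associated McNaughton functions in $\freeMV(n)$ coincide, as set-theoretic functions $[0,1]^n\to[0,1]$, with the Wajsberg functions in $\freeW(n)$ representing them; in particular the subsets $O_t,O_u\subseteq[0,1]^n$ are literally the same regardless of whether we regard them as computed inside $\freeMV(n)$ or $\freeW(n)$. Applying the same filter-to-one-set correspondence (which, as noted in the paper, is a particular instance of \cite[Lemma~4.1]{AglianoPanti1999} and holds for $\freeMV(n)$ just as well) yields (3)$\Leftrightarrow O_t\sse O_u$ in $\freeMV(n)\Leftrightarrow O_t\sse O_u$ in $\freeW(n)$, which is (2).

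As a sanity check on (1)$\Leftrightarrow$(3) one may alternatively argue directly: both varieties are generated by their standard algebras $[0,1]_{\WH}$ and $[0,1]_{\MV}$, and the former is the $0$-free reduct of the latter, so for a quasi-identity written in the positive language the two generic algebras validate exactly the same instances. The only point requiring care is that $\WH$ is strictly larger than the class of $0$-free reducts of MV-algebras (it includes cancellative hoops), so the implication (3)$\Rightarrow$(1) is not a mere reduct argument; however, the generation of $\WH$ by $[0,1]_{\WH}$ closes the gap. I do not anticipate any real obstacle here: all three equivalences are routine consequences of the geometric Lemma \ref{lemma:fil-oneset}, the standard filter/deduction dictionary, and the fact that positive terms are interpreted identically in the Wajsberg and MV settings.
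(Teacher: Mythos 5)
Your proof is correct, but your route to $(1)\Leftrightarrow(2)$ differs from the paper's. The paper reduces validity of the quasiequation in $\WH$ to validity in the standard algebra $[0,1]_{\WH}$, using the fact (cited to Blok--Ferreirim) that $[0,1]_{\WH}$ generates $\WH$ \emph{as a quasivariety}; there the quasiequation is tautologically the containment $O_t\sse O_u$, and Lemma \ref{lemma:fil-oneset} is not used at all. You instead invoke the local deduction theorem for ideal-determined varieties to identify $(1)$ with $u\in\Fil_{\freeW(n)}(t)$, and then apply Lemma \ref{lemma:fil-oneset}; this is a legitimate alternative that trades the quasivariety-generation fact for the filter/deduction dictionary already set up in Section \ref{sec:prel}, and it has the small advantage of making explicit where the geometric lemma enters. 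For $(2)\Leftrightarrow(3)$ both you and the paper defer to the known MV-algebraic analogue (the paper cites \cite[Corollary 1.10]{Mundici11}), after noting that a positive term determines the same function, hence the same one-set, in $\freeMV(n)$ and $\freeW(n)$. One caveat on your closing ``sanity check'': generation of $\WH$ and $\MV$ as \emph{varieties} by their standard algebras is not enough to transfer quasiequations, so as written that alternative argument for $(1)\Leftrightarrow(3)$ has a gap --- one needs generation as a quasivariety (or, alternatively, the fact that every Wajsberg hoop is literally a $0$-free subreduct of an MV-algebra, which follows from the results of Section 3). Since your main argument does not rely on that paragraph, this does not affect the correctness of the proof.
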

\begin{proof}
	$(1) \Leftrightarrow (2)$ follows from the fact that the standard Wajsberg hoop on $[0,1]$ generates $\WH$ also as a quasivariety \cite{BlokFerr2000}, which means that a quasiequation $(t \approx 1) \Rightarrow (u \approx 1)$ holds in $\WH$ if and only if it holds in $[0,1]_{\WH}$. Thus, (1) holds if and only if for each valuation 
	$h$ of $\{x_1, \ldots, x_n\}$ to $[0,1]_{\WH}$, $h(t) = 1$ implies $h(u) = 1$, which is equivalent to saying that for all $\x \in [0,1]^n$, $f_t(\x) =1$ implies $f_u(\x) = 1$, where $f_t, f_u \in \freeW(n)$ are the Wajsberg functions associated to $t$ and $u$ respectively. Therefore, (1) and (2) are equivalent.   
	
	Given that $\freeW(n) \sse \freeMV(n)$, $(2) \Leftrightarrow (3)$ is shown in \cite[Corollary 1.10]{Mundici11}.
\end{proof}
As a consequence of algebraizability, we recall that a quasiequation of the kind $(t \approx 1) \Rightarrow (u \approx 1)$ is valid in $\MV$ ($\WH$) if and only if $t$ deduces $u$ in {\bf \L} ({\bf \L}$^+$). Thus, we have also shown that deducibility in $\lu$ corresponds to deducibility of positive terms in \luk\ logic.

In what follows, given a variety $\vv V$ and two sets of identities $\Sigma, \Delta$ in its language, we will write $\Sigma \models_{\vv V} \Delta$ to mean that for each identity $\delta \in \Delta$, the quasiequation $\Sigma \Rightarrow \delta$ is valid in $\vv V$.

 Another direct consequence of the geometrical representation, and another analogy between Wajsberg hoops and MV-algebras, is that the variety $\WH$ is \emph{coherent} in the sense of \cite{Whe}: any finitely generated subalgebra of a finitely presented member is finitely presented (Lemma \ref{lemma:finpressub}). 
 This entails the fact that $\lu$, as \luk\ logic, has \emph{right uniform (Maehara) interpolation} in the sense of \cite{VGMT}:  a variety $\vv V$ admits right uniform Maehara interpolation if for any finite sets of variables $X, Y$ and finite set of equations $\Gamma$ over variables in $X \cup Y$, there exists a finite set of equations $\Pi$ over variables in $Y$ such that 
 for any finite sets of equations $\Sigma$ and $\Delta$ over variables in $Y \cup Z$, $\Gamma,\Sigma \models_{\vv V} \Delta$ if and only if $\Pi, \Sigma \models_{\vv V} \Delta$. 
 \begin{proposition}
 	$\WH$ admits right uniform Maehara interpolation.
 \end{proposition}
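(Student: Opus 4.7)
The plan is to derive the claim directly from the coherence of $\WH$ (Lemma \ref{lemma:finpressub}) via the general correspondence established in \cite{VGMT} between coherence and right uniform Maehara interpolation. The proposition is explicitly phrased ``in the sense of \cite{VGMT}'' precisely because there is a ready-made theorem that does essentially all the work once coherence is in hand.

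First I would verify that $\WH$ satisfies the structural hypotheses of the relevant theorem in \cite{VGMT}: it is a subvariety of commutative integral residuated lattices, and hence enjoys the congruence extension property (\cite[Lemma 3.57]{GJKO}), while its compact congruences correspond bijectively to principal congruence filters of its algebras (as recalled in Section \ref{sec:prel}, via the filter--congruence isomorphism together with the fact that a finitely generated congruence filter is generated by the meet of its finite generating set). These are exactly the ambient hypotheses under which the theorem in \cite{VGMT} equates coherence of the variety with the uniform deductive interpolation property in its Maehara formulation.

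Second, I would invoke that theorem together with Lemma \ref{lemma:finpressub}, which provides coherence of $\WH$ in the precise form ``every finitely generated subalgebra of a finitely presented algebra is finitely presented''. This immediately yields, for every finite set $\Gamma$ of equations over $X \cup Y$, the desired finite set $\Pi$ of equations over $Y$ that is deductively indistinguishable from $\Gamma$ as far as consequences over $Y \cup Z$ are concerned.

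The main (and essentially only) point requiring attention — the ``obstacle'', modest as it is — is checking that the form of right uniform Maehara interpolation recalled just before the proposition matches on the nose the formulation used in \cite{VGMT}. This is routine thanks to the standard reduction of equations in commutative integral residuated lattices to the form $t \approx 1$ via the residual $\to$ (which in $\WH$ is already accounted for by Proposition \ref{prop:geomded}), but it is worth spelling out before appealing to the external theorem, since the two presentations differ cosmetically.
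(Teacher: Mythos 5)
There is a genuine gap: you derive the proposition from coherence alone (plus the congruence extension property), but coherence is only one of two independent ingredients. The general correspondence — stated in \cite[Proposition 2.4]{KM19}, not in \cite{VGMT} — is that a variety has \emph{right uniform deductive interpolation} if and only if it is coherent \emph{and} has the (ordinary, non-uniform) deductive interpolation property. Coherence by itself does not yield the interpolant $\Pi$: it guarantees that the relevant compact congruences restrict to compact congruences on subalgebras generated by the variables in $Y$, but whether the resulting finite set $\Pi$ actually captures all consequences of $\Gamma$ over $Y\cup Z$ is governed by deductive interpolation, which for varieties with the congruence extension property amounts to an amalgamation-type property and is a nontrivial fact about $\WH$ in its own right. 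The paper supplies it by citing Montagna's semantic proof of interpolation for these structures \cite{Mo06}; your argument never establishes or invokes it, so the "ready-made theorem" you appeal to does not apply as stated.

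The remainder of your outline is sound and matches the paper: coherence comes from Lemma \ref{lemma:finpressub}, and the passage from right uniform \emph{deductive} interpolation to the \emph{Maehara} formulation is exactly \cite[Theorem 3.13]{VGMT} together with the congruence extension property for commutative residuated lattices \cite[Lemma 3.57]{GJKO}. To repair the proof, insert the missing step: cite (or prove) deductive interpolation for $\WH$, then apply \cite[Proposition 2.4]{KM19} to get right uniform deductive interpolation, and only then upgrade to the Maehara form via \cite{VGMT}.
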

\begin{proof}
A variety admits right uniform deductive interpolation if and only if it is coherent and admits deductive interpolation \cite[Proposition 2.4]{KM19}.	 $\WH$ is coherent for Lemma \ref{lemma:finpressub}, while deductive interpolation has been shown in \cite{Mo06}.
Moreover, by \cite[Theorem 3.13]{VGMT}, a variety with right uniform deductive interpolation and the congruence extension property admits Maehara right uniform interpolation, and every commutative variety of residuated lattices has the congruence extension property \cite[Lemma 3.57]{GJKO}.
\end{proof}
As it is clear from the above proof, right uniform Maehara interpolation implies right uniform deductive interpolation (in the sense of \cite[Definition 3.1]{VGMT}). We recall that the Maehara interpolation property for Wajsberg hoops holds for the results in \cite{Mo06, MMT14}. However, as $\ell$-groups, but unlike MV-algebras, $\WH$ does not admit \emph{left uniform deductive interpolation}. A variety $\vv V$ admits left uniform deductive interpolation if for any finite sets $Y, Z$ and finite set of equations $\Delta$ with variables in $Y \cup Z$, there exists a finite set of equations $\Pi$ over variables in $Y$ such that $\Pi \models_{\vv V} \Delta$ and for any set of equations $\Sigma$ over $X \cup Y$, $\Sigma \models_{\vv V} \Delta$ implies $\Sigma \models_{\vv V} \Pi$. This property indeed fails trivially, as in $\ell$-groups (\cite[Remark 4.2]{VGMT}), since there is no set of equations $\Pi(y)$ such that $\Pi \models_{\WH} y \approx z$.

We shall now move to the main applications of the geometrical interpretation of finitely presented Wajsberg hoops, that involve the two related logical problems of unification and admissibility, via the algebraic approaches developed respectively in \cite{Gh97} and \cite{CMe15}.
\subsection{Unification}
Finitely generated projective algebras in a variety that is the equivalent algebraic semantics of a logic are connected to unification problems for their corresponding logics via Ghilardi's algebraic approach to unification \cite{Gh97}. 

The usual notion of unification problem for a logic consists in a finite set of identities, and a solution or \emph{unifier} is a substitution that makes the identities provable in the logic. 

Ghilardi shows that in an algebraizable logic $\mathcal{L}$ with equivalent algebraic semantics $\vv V_{\mathcal{L}}$, one can instead consider the following, equivalent, notion. An algebraic unification problem is a finitely presented algebra $\alg A$ in the variety, and a unifier is a homomorphism $u: \alg A \to \alg P$, where $\alg P$ is a projective algebra in $\vv V_{\mathcal{L}}$. 
The underlying idea is that, given a symbolic unification problem given by the set of identities $\{s_i \approx t_i : i = 1 \ldots n\}$, where $s_i, t_i$ for $i = 1 \ldots n$ are terms in the appropriate language over a set of variables $X$, the corresponding finitely presented algebra is $\free_{\vv V_{\cc L}}(X)/\theta$, where $\theta$ is the congruence (finitely) generated by the pairs $\{(s_i, t_i) : i = 1 \ldots n\}$.
The algebraic unifiers can be ordered by {\em generality} in the following way: given two unifiers for a problem $\alg A$, say $u_{1}: \alg A \to \alg P_{1}$ and $u_{2}: \alg A \to \alg P_{2}$, we say that $u_{1}$ is more general than $u_{2}$ if there exists an homomorphism $h: P_{1} \to P_{2}$ such that $h \circ u_{1} = u_{2}$. This gives a preorder on the set of unifiers, thus one can consider the associated partial order of unifiers that are ``equally general''. 

Then the \emph{unification type} of the problem is said to be: \emph{unitary}, if there is a maximum in the partial order of unifiers; \emph{finitary}, if there are instead finitely many maximal elements; \emph{infinitary}, if instead there are infinitely many maximal elements; \emph{nullary}, otherwise. The unification type of the variety, and via Ghilardi's results of the corresponding logic, is the worst unification type occurring for the unification problems in the variety. 

In \cite{MarraSpada13} the authors show that the unification type of MV-algebras, and hence of \luk\ logic, is nullary. Indeed, using the duality with polyhedra, they show a unification problem with a co-final chain of order-type $\omega$ of unifiers. 

Let us turn our attention to the positive fragment $\lu$, or equivalently, to unification problems in Wajsberg hoops.
First, notice that every unification problem in $\WH$ has a solution, that is, there is always a homomorphism to the projective trivial Wajsberg hoop. Equivalently, the substitution mapping all variables to $1$ unifies every identity. Whereas in \luk\ logic, an identity is unifiable if and only if it is classically satisfiable (\cite[Lemma 3.3]{Je05}), or, from the algebraic point of view, if and only if there is a homomorphism to the $2$-element Boolean algebra, since $\alg 2$ is projective and a retract of every projective MV-algebra \cite{AU21}. 

Moreover, while clearly if an identity is unifiable in $\lu$ it is unifiable in \luk\ as well via the same substitution, unification in $\lu$ does not reduce to unification of positive identities in \luk\ logic, since substitutions in \luk\ logic may also make use of the constant $0$. 
\begin{example}\label{ex:unif}
	Let us define in the language of Wajsberg hoops $x \oplus y : = (x \to (x\cdot y)) \to y$, which in $\MV$ coincides with \luk\ sum, and we write $2x$ for $x \oplus x$. Consider now the identity $2x \to x \approx 1$. Since $\MV \models 2x \to x \approx x \lor \neg x$, the one-set of $2x \to x$ in $[0,1]$ is given by $\{0,1\}$. Thus the identity algebraically corresponds to the finitely presented algebra $\alg W_2$. A unifier for $\alg W_2$ would be a homomorphism to a (finitely generated) projective algebra, that is, a retract of a finitely generated free algebra. Since $\alg W_2$ is bounded, and the only idempotent element of a free finitely generated Wajsberg hoop is the top element, the only possible unifier is the homomorphism to $\alg W_1$.
	
	Equivalently, the identity only has as a solution in $\WH$, (the equivalence class of) the substitution $\sigma(x) = 1$. Whereas in $\MV$, $\tau(x) = 0$ is also a solution, and $\sigma$ and $\tau$ are incomparable in the poset of unifiers.
\end{example}
Nonetheless, we show that we can suitably adapt the pathological example in \cite{MarraSpada13} to our case, and conclude that the unification type of Wajsberg hoops, and thus of the positive fragment of \luk\ logic, is nullary as well.  

The unification problem we consider is equivalent to the one in \cite{MarraSpada13}, with its corresponding polyhedron being the boundary of the unit square in $[0,1]^{2}$, which is a pointed rational polyhedron. In the MV-algebraic case, this corresponds to the identity $x \lor \neg x \lor y \lor \neg y \approx 1$, which is equivalent to the (more cumbersome) identity in the negation-free language of Wajsberg hoops: 
\begin{equation}\label{eq:unifproblem}[((x \to x^{2}) \to x) \to x] \lor [((y \to y^{2}) \to y) \to y] \approx 1
\end{equation}	

Direct computation indeed shows that the one-set of $((x \to x^{2}) \to x) \to x] \lor [((y \to y^{2}) \to y) \to y$ is the border of the unit square, which we will denote by $\cc B$ in analogy to \cite{MarraSpada13}. Notice that this is not a $\zz$-retract of a unit cube, since it is clearly not contractible. The unification problem we are considering is then $\alg A = \freeW(2)/ \cc B$.

We will now find a co-final chain of order-type $\omega$ of unifiers, i.e. projective Wajsberg hoops $\{\alg P_{i}\}_{i \geq 1}$, $i \in \mathbb{N}$, and corresponding homomorphisms $u_{i}: \alg A \to \alg P_{i}$. It will suffice to obtain a slight modification of the key lemma 6.2 in \cite{MarraSpada13}. 

\begin{lemma}\label{lemma:MS}
	For each integer $i\in \mathbb{N}, i \geq 1$, there is an integer $n_i \in \mathbb{N}, n_i \geq 1,$ and a pointed rational polyhedron $\scP_i \sse [0,1]^{n_i}$ that is a pointed $\mathbb{Z}$-retract of $[0, 1]^{n_i}$ such that:
	\begin{enumerate}
		\item\label{lemma:MS1} for each $i\geq 1$ there are onto pointed $\mathbb{Z}$-maps $\mu_i: \scP_i \to \cc{B}$, and injective pointed $\mathbb{Z}$-maps $\nu_i$ such that the following diagram commutes:
		$$
		\xymatrix{
		\cc B & \scP_i\ar[l]_{\mu_i}\ar[d]^{\nu_i}\\
		& \scP_{i+1}\ar[ul]^{\mu_{i+1}}
		}
		$$
		\item\label{lemma:MS2} For any two $i > j > 0$, there is no pointed $\mathbb{Z}$-map $\nu : \scP_i \to \scP_j$ making the following diagram commute.
			$$
		\xymatrix{
		\cc B & \scP_i\ar[l]_{\mu_i}\ar[d]^{\nu}\\
		& \scP_{j}\ar[ul]^{\mu_{j}}
		}
		$$
		\item\label{lemma:MS3} Consider any $n \in \mathbb{N}, n \geq 1$, $\scP \sse [0, 1]^n$ pointed $\mathbb{Z}$-retract of $[0, 1]^n$,  and pointed $\mathbb{Z}$-map $\gamma : \scP \to \cc B$. Then there exist $i_0 \geq 1$ and a $\mathbb{Z}$-map $\gamma' : \scP \to \scP_{i_0}$ such that the following diagram commutes. 
		$$
				\xymatrix{
		\cc B & \scP_{i_0}\ar[l]_{\mu_{i_0}}\\
		& \scP\ar[ul]^{\gamma}\ar[u]_{\gamma'}
		}
		$$
	\end{enumerate}
\end{lemma}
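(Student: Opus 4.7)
The plan is to adapt the pathological example of \cite[Lemma~6.2]{MarraSpada13} to the pointed setting, realising each $\scP_i$ as an $i$-fold pointed cover of $\cc B$. Concretely, since $\cc B$ is homeomorphic to a circle and contains the basepoint $\one$, I would realise the $i$-fold cover inside some cube $[0,1]^{n_i}$ as a closed one-dimensional rational polyhedron containing $\one$, with $\mu_i : \scP_i \to \cc B$ the natural covering map, realised as a pointed $\mathbb{Z}$-map that wraps $i$ times around $\cc B$ and sends $\one$ to $\one$. Each $\scP_i$ will be a pointed $\mathbb{Z}$-retract: being a one-dimensional contractible strongly regular rational polyhedron that contains a lattice point, it satisfies the one-dimensional characterisation of pointed $\mathbb{Z}$-retracts proved earlier, and hence by Theorem~\ref{thm:duality} corresponds to a finitely generated projective Wajsberg hoop. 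The pointed $\mathbb{Z}$-embedding $\nu_i : \scP_i \hookrightarrow \scP_{i+1}$ is the inclusion of the $i$-fold cover into the $(i+1)$-fold cover, and commutativity of the triangle in item (\ref{lemma:MS1}) is immediate from the construction.

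For item (\ref{lemma:MS2}), I would argue by degree: any hypothetical pointed $\mathbb{Z}$-map $\nu : \scP_i \to \scP_j$ satisfying $\mu_j \circ \nu = \mu_i$ would induce on homotopy classes a relation $j \cdot \deg(\nu) = i$ among mapping degrees of circles, forcing $j \mid i$ and $\deg(\nu) = i/j \geq 2$; a direct degree-theoretic calculation parallel to \cite[Lemma~6.2]{MarraSpada13} then produces a contradiction. Item (\ref{lemma:MS3}) is the cofinality step and is where the pointed setting plays its distinctive role. Since pointed $\mathbb{Z}$-retracts are contractible (by the characterisation theorem preceding Theorem~\ref{thm:exact}), any pointed continuous map $\gamma : \scP \to \cc B$ is null-homotopic and therefore lifts along the universal cover $\mathbb{R} \to \cc B$ to a continuous map $\tilde\gamma : \scP \to \mathbb{R}$ with $\tilde\gamma(\one) = 0$. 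Compactness of $\scP$ forces $\tilde\gamma(\scP)$ to lie in a bounded arc of $\mathbb{R}$, which is contained in the lifted copy of some finite cover $\scP_{i_0}$; because $\mu_{i_0}$ is a local pointed $\mathbb{Z}$-homeomorphism, the piecewise-integer-linear structure of $\gamma$ transfers to $\tilde\gamma$, yielding the required pointed $\mathbb{Z}$-map $\gamma' : \scP \to \scP_{i_0}$ with $\mu_{i_0} \circ \gamma' = \gamma$.

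The main obstacle is the delicate bookkeeping in item (\ref{lemma:MS3}): I must verify that the continuous lift $\tilde\gamma$ inherits the PL integer-coefficient structure from $\gamma$ rather than being merely continuous. This hinges on the $\mu_i$ being local $\mathbb{Z}$-homeomorphisms, so that integrality can be pulled back along their local inverses on each simplex of a sufficiently fine rational triangulation of $\scP$ subordinate to the branch points of the cover. A secondary technical task is the explicit realisation of each $\scP_i$ as a pointed $\mathbb{Z}$-retract of a unit cube, requiring an ambient PL retraction with integer coefficients that fixes $\one$; this follows the polyhedral construction of \cite[Lemma~6.2]{MarraSpada13} after a coordinate shift that brings the chosen basepoint to $\one$.
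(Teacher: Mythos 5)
Your overall strategy is the right one --- adapt the Marra--Spada spiral example to the pointed setting --- and items (\ref{lemma:MS1}) and (\ref{lemma:MS3}) are in the same spirit as the paper's proof, which transports \cite[Lemmas 6.1 and 6.2]{MarraSpada13} through explicit pointed $\Z$-homeomorphisms (relabelling the vertices of the path complex $\mathscr{T}_i$ so that the last vertex lands on $\one$, and rotating $\cc B$ so that the distinguished point is sent to $\one$), and then reads off the three items from the corresponding items of Marra--Spada's lemma rather than re-deriving the topology.

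However, as written your construction is internally inconsistent. You describe $\scP_i$ as ``the $i$-fold pointed cover of $\cc B$'' and simultaneously as a \emph{contractible} strongly regular one-dimensional polyhedron (so as to invoke the one-dimensional characterisation of pointed $\Z$-retracts). A genuine $i$-fold connected covering space of $\cc B \cong S^1$ is again a circle: it is not contractible, it is not a $\Z$-retract of any cube, and there is no inclusion of the $i$-fold cover into the $(i+1)$-fold cover commuting with the projections. The objects you need are the spiral \emph{arcs} of \cite[Lemma 6.1]{MarraSpada13}: compact intervals in the universal cover of $\cc B$ that wind $i$ times over the base; these are contractible and surject onto $\cc B$, but the projection is not a covering map. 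This error propagates into your argument for item (\ref{lemma:MS2}): maps between contractible arcs have no degree, so the relation $j\cdot\deg(\nu)=i$ is not available. The correct obstruction is a lifting/length argument --- identifying $\scP_i$ with an interval of winding length $i$ in the universal cover, any $\nu$ with $\mu_j\circ\nu=\mu_i$ must be a lift of $\mu_i$ through $\mu_j$, hence (up to an integer translate) the identity on the universal cover, and its image cannot fit inside an interval of length $j<i$. This is exactly what \cite[Lemma 6.2(ii)]{MarraSpada13} establishes, and the paper simply cites it after checking that the pointed $\Z$-homeomorphisms do not disturb it. Your lifting argument for item (\ref{lemma:MS3}), including the concern about transferring the piecewise-integer-linear structure along local $\Z$-homeomorphisms, is sound in outline and is essentially the content of \cite[Lemma 6.2(iii)]{MarraSpada13}.
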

\begin{proof}
	In order to find the pointed polyhedra $\scP_i$, we make use of \cite[Lemma 6.1]{MarraSpada13}. Given each $i \geq 1$, we consider the polyhedron $\mathfrak{t}_i$ as in \cite[Figure 1]{MarraSpada13}, that gives an increasing sequence of squared spirals in $\mathbb{R}^3$ which each project onto the square $\cc B$. Each one of such polyhedra is shown, in \cite[Lemma 6.1]{MarraSpada13}, to be $\mathbb{Z}$-homeomorphic to a rational polyhedron, say $\mathscr{T}_i \sse [0,1]^{n_i}$ where $n_i$ is the number of integer-valued points of $\mathfrak{t}_i$. More precisely, $\mathscr{T}_i$ is the simplicial complex
	given by $$\{\emptyset\} \cup \{e_1, \ldots, e_{n_i}\} \cup \{e_{1,2}, e_{2,3}, \ldots,e_{n_i -1, n_i}\}$$
	where $e_1 \ldots e_{n_i}$ is the standard basis of $\mathbb{R}^{n_i}$ and $e_{i, i+1}$ is the simplex in $\mathbb{R}^{n_i}$ given by the convex hull of $\{e_i, e_{i+1}\}$. 		
	 We can show that each $\mathscr{T}_i$ is $\mathbb{Z}$-homeomorphic to a pointed rational polyhedron, say $\scP_i \sse [0,1]^{n_i}$, where we map each vertex $e_1 \ldots e_{n_i}$ to a different integer-valued point of the same cube, in particular where we map $e_{n_i}$ to $\uno \in [0,1]^{n_i}$. We can indeed consider the $\mathbb{Z}$-homeomorphism $\iota_i$ extending the following assignment on the vertices of $\mathscr{T}_i$:
	 $$
	 \begin{array}{ll}
	 \iota_i(e_1) &= (1, 0, 0, \ldots,0, 0, 0),\\
	 \iota_i(e_2) &= (1, 0, 0, \ldots,0, 0, 1),\\ 
	 \iota_i(e_3) &= (1, 0, 0, \ldots,0, 1, 1),\\ 
		&\ldots\\  
	\iota_i(e_{n_i -2}) &=  (1, 0, 0, 1 \ldots, 1, 1),\\
	\iota_i(e_{n_i -1}) &= (1, 0, 1, \ldots, 1, 1)\\
	 \iota_i(e_{n_i}) &= (1, 1, \ldots, 1, 1).\\ 
	\end{array}
	$$
	 
	 Now, we call $\zeta_i: \mathfrak{t}_i \to \cc B$ the onto projection map,  $f_i$ the $\mathbb{Z}$-homeomorphism from $\mathscr{T}_i$ to $\mathfrak{t}_i$ from \cite[Lemma 6.1]{MarraSpada13}. Direct inspection shows that the composition $\zeta_i \circ f_i \circ \iota_i^{-1}$ is such that $\zeta_i \circ f_i \circ \iota_i^{-1}(\uno) = \zeta_i \circ f_i (e_{n_i}) = \zeta_i (1, 0, n_i) = (1,0) \in [0,1]^2$. We further compose with the $\mathbb{Z}$-homeomorphism $\beta$ that maps $\cc B$ onto itself mapping: $$(1,0) \mapsto (1,1), (1,1) \mapsto (0,1), (0,1) \mapsto (0,0), (0,0) \mapsto (1,0).$$
	 Let us denote with $\eta_i$ the map  from $\mathfrak{t}_i$ to $\mathfrak{t}_{i+1}$ in \cite[Lemma 6.2]{MarraSpada13}.
	 We claim that then the wanted maps are $\mu_i = \beta \circ \zeta_i \circ f_i \circ \iota_i^{-1}$, and $\nu_i = \iota_{i +1} \circ f_{i+1}^{-1} \circ \eta_i \circ f_i \circ \iota_i^{-1}$, as clarified in the following diagram:
	 $$
		\xymatrix{
		\cc B & \cc B\ar[l]_{\beta} & \mathfrak{t}_i \ar[l]_{\zeta_i}\ar[d]^{\eta_i}& \mathscr{T}_i \ar[l]_{f_i} & \scP_i\ar[l]_{\iota_i^{-1}}\\
		& & \mathfrak{t}_{i+1} \ar[ul]^{\zeta_{i+1}} \ar[r]^{f_{i+1}^{-1}}& \mathscr{T}_{i+1} \ar[r]^{\iota_{i+1}} & \scP_{i+1}
		}
		$$
	 Indeed, for (\ref{lemma:MS1}), $\nu_i$ is injective as it is a composition of injective maps, and $\mu_i$ is onto since it is a composition of surjective maps. The diagram commutes since $\zeta_i = \zeta_{i+1} \circ \eta_i$ by \cite[Lemma 6.2 (i)]{MarraSpada13}. For (\ref{lemma:MS2}), if there were such a $\nu$, this would lead to a contradiction of \cite[Lemma 6.2 (ii)]{MarraSpada13}. In the very same way, since a pointed rational polyhedron is in particular a rational polyhedron, the last point follows from \cite[Lemma 6.2 (iii)]{MarraSpada13} and the above $\Z$-homeomorphism between each $\mathscr{T}_i$ and $\scP_i$.
\end{proof}
\begin{theorem}
The unification type of Wajsberg hoops, and therefore of the positive fragment of \luk\ logic, is nullary.
\end{theorem}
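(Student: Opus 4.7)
The plan is to translate Lemma \ref{lemma:MS} into algebraic language via the duality of Theorem \ref{thm:duality}, thereby exhibiting a strictly ascending, cofinal $\omega$-chain of unifiers for the unification problem $\alg A \cong \W(\cc B)$. Since each $\scP_i$ is a pointed $\mathbb{Z}$-retract of a cube, the characterisation of finitely generated projective Wajsberg hoops established earlier in this section shows that $\alg P_i := \W(\scP_i)$ is a finitely generated projective Wajsberg hoop. The surjections $\mu_i \colon \scP_i \twoheadrightarrow \cc B$ of Lemma \ref{lemma:MS}(\ref{lemma:MS1}) then dualise to homomorphisms $u_i := \W(\mu_i) \colon \alg A \to \alg P_i$, which are therefore genuine algebraic unifiers of $\alg A$ in Ghilardi's sense.

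The next step is to verify that $u_1 < u_2 < \cdots$ is strictly increasing in the generality preorder. Applying the contravariant functor $\W$ to the identity $\mu_i = \mu_{i+1}\circ \nu_i$ yields $u_i = \W(\nu_i)\circ u_{i+1}$ with $\W(\nu_i) \colon \alg P_{i+1} \to \alg P_i$, so $u_{i+1}$ is more general than $u_i$. Conversely, any homomorphism $h \colon \alg P_i \to \alg P_{i+1}$ witnessing that $u_i$ is also more general than $u_{i+1}$ would, by the fullness and faithfulness granted by the duality, necessarily be of the form $h = \W(\nu)$ for some pointed $\mathbb{Z}$-map $\nu \colon \scP_{i+1} \to \scP_i$ satisfying $\mu_{i+1} = \mu_i \circ \nu$, directly contradicting Lemma \ref{lemma:MS}(\ref{lemma:MS2}). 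Cofinality is read off from part (\ref{lemma:MS3}): any unifier $u \colon \alg A \to \alg P$ has $\alg P \cong \W(\scP)$ for some pointed $\mathbb{Z}$-retract $\scP$, hence $u = \W(\gamma)$ for some pointed $\mathbb{Z}$-map $\gamma \colon \scP \to \cc B$, and (\ref{lemma:MS3}) supplies an index $i_0$ and a factorisation $\gamma = \mu_{i_0} \circ \gamma'$; dualising gives $u = \W(\gamma') \circ u_{i_0}$, so $u_{i_0}$ is more general than $u$.

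The proof is then concluded by a short order-theoretic argument: if $\alg A$ admitted a maximal unifier $u^*$, cofinality would yield some $i_0$ with $u^* \leq u_{i_0}$, and maximality would force $u^*$ and $u_{i_0}$ to be equally general; but $u_{i_0} < u_{i_0+1}$ strictly, so $u^* < u_{i_0+1}$, contradicting maximality. Hence $\alg A$ has no maximal unifier and the unification type of $\WH$, and via Ghilardi's equivalence that of $\lu$, is nullary. The geometric heavy lifting has already been discharged in Lemma \ref{lemma:MS}; the only delicate points in what remains are to keep careful track of contravariance when translating commuting triangles of pointed $\mathbb{Z}$-maps into commuting triangles of Wajsberg hoop homomorphisms, and to ensure, via the projective characterisation theorem, that every candidate unifier truly lies in the scope of Lemma \ref{lemma:MS}(\ref{lemma:MS3}).
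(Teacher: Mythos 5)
Your proposal is correct and follows essentially the same route as the paper: both arguments dualise the maps $\mu_i$, $\nu_i$ of Lemma \ref{lemma:MS} via Theorem \ref{thm:duality} to obtain a strictly ascending cofinal $\omega$-chain of unifiers of $\W(\cc B)$, using the pointed $\mathbb{Z}$-retract characterisation of finitely generated projectives to see that the $\W(\scP_i)$ are projective and that every unifier falls under part (\ref{lemma:MS3}). Your write-up merely makes explicit the contravariance bookkeeping and the final order-theoretic step that the paper leaves implicit.
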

\begin{proof}
	The proof follows from Lemma \ref{lemma:MS} and the duality in Theorem \ref{thm:duality}. Indeed, consider the unification problem described by the identity \ref{eq:unifproblem} and represented by the polyhedron $\cc B$. Then, consider each rational polyhedron $\scP_i$, for $i \geq 1$, from Lemma  \ref{lemma:MS}. Each $\scP_i$ is a $\mathbb{Z}$-retract of $[0,1]^{n_i}$, since it is $\mathbb{Z}$-homeomorphic to $\mathscr{T}_i$, and therefore also to $\mathfrak{t}_i$ in \cite[Lemma 6.2]{MarraSpada13}. Since each $\scP_i$ is pointed, the family $\{\scP_i\}_{i \geq 1}$ corresponds to a countable family of projective Wajsberg hoops. Taking the dual of each map $\mu_i$ gives a unifier for the  unification problem represented by $\cc B$. By Lemma \ref{lemma:MS} (\ref{lemma:MS1}) and (\ref{lemma:MS2}), the maps $\mu_i$ for $i \in \mathbb{N}, i \geq 1$ are a strictly increasing chain of order type $\omega$ with respect to the partial order of generality. By Lemma \ref{lemma:MS} (\ref{lemma:MS3}) the chain is co-final. Thus we have a unification problem with nullary type, and therefore the unification type of Wajsberg hoops, and of the positive reduct of \luk\ logic, is nullary.
\end{proof}
\subsection{Admissibility}
Checking the admissibility of rules is a problem closely connected to unification. Given two sets of formulas $\Sigma, \Gamma$ in the language of a variety $\vv V$, we say that $\Sigma \Rightarrow \Gamma$ is $\vv V$-\emph{admissible} if every substitution that unifies all formulas of $\Sigma$ in $\vv V$, it unifies at least one formula of $\Gamma$ in $\vv V$. This is equivalent to the universal formula $\bigwedge\Sigma \Rightarrow \bigvee\Gamma$ being valid in the free algebra over $\omega$ generators in $\vv V$ \cite[Lemma 2.4]{CMe15}. For details on the algebraic approach to admissibility we refer to \cite{CMe15}, 
where the authors show that admissibility is related to what they call \emph{exact unification}. 

More precisely, they define a notion of \emph{exact unification type}, based on a preorder on the set of unifiers for a unification problem that is weaker than the generality preorder. Then the authors prove that, given a rule $\Sigma \Rightarrow \Gamma$, it suffices to check admissibility with respect to substitutions in a set $S$ which includes all the maximal unifiers in the new preorder \cite[Corollary 3.2]{CMe15}. Here we are interested in the algebraic counterpart of this preorder.

Algebraically, studying the exact unification type corresponds to considering the set of \emph{coaxact unifiers}. Given a finitely presented algebra $\alg A \in \vv V$, a coexact unifier is a homomorphism onto an exact algebra $\alg E \in \vv V$. Coexact unifiers are ordered as algebraic unifiers, that is, given two coexact unifiers for a problem $\alg A$, say $u_{1}: \alg A \to \alg E_{1}$ and $u_{2}: \alg A \to \alg E_{2}$, we say that $u_{1}$ is more general than $u_{2}$ if there exists an homomorphism $h: E_{1} \to E_{2}$ such that $h \circ u_{1} = u_{2}$.
 
The study of the exact unification type is relevant since if a variety $\vv V$ has unitary or finitary exact type, and there exists an algorithm for finding a finite complete set of unifiers, then checking admissibility in $\vv V$ is decidable whenever the equational theory of $\vv V$ is decidable. 

While in \cite{CMe15} the authors prove that \luk\ logic has finitary exact type, we will show that Wajsberg hoops, and therefore the positive fragment of \luk\ logic, have instead unitary type. 

The following useful fact is the analogue of \cite[Theorem 4.1, Corollary 4.2]{Je05}, we sketch the proof in our language for the sake of the reader. 
\begin{lemma}\label{lemma:1adm}
All free Wajsberg hoops $\freeW(n)$, for $n \in \mathbb{N}, n \geq 1$, have the same universal theory.
\end{lemma}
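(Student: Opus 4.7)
The plan is to establish containment of universal theories in both directions between $\freeW(n)$ and $\freeW(1)$, for each $n \geq 1$; the result then follows by transitivity. One direction is immediate: $\freeW(1)$ is isomorphic to the subalgebra of $\freeW(n)$ generated by $x_1$, so since universal sentences are preserved under substructures, every universal sentence valid in $\freeW(n)$ holds in $\freeW(1)$.

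For the reverse direction, I would show that every existential sentence $\exists \bar x\,\psi(\bar x)$ satisfiable in $\freeW(n)$ is also satisfiable in $\freeW(1)$. Rewriting atomic formulas $t \approx u$ as $(t \to u)\cdot(u \to t) \approx 1$, I may assume $\psi$ is a disjunction of conjunctions of literals $t_i = 1$ and $s_j \neq 1$. A witness $\bar f = (f_1,\ldots,f_k) \in \freeW(n)^k$ then consists of Wajsberg functions satisfying $t_i(\bar f) \equiv 1$ on $[0,1]^n$ and each $s_j(\bar f)$ failing to equal $1$ somewhere on $[0,1]^n$. Since $s_j(\bar f)$ is a McNaughton function and its one-set is a rational polyhedron, I can pick rational witness points $x_j \in [0,1]^n$ with $s_j(\bar f)(x_j) \neq 1$.

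The key construction is then a pointed $\mathbb{Z}$-map $\gamma : [0,1] \to [0,1]^n$ whose image contains $\{\one, x_1, \ldots, x_r\}$. By Lemma \ref{lemma:pzmap}, $\gamma$ amounts to Wajsberg functions $\gamma_1,\ldots,\gamma_n \in \freeW(1)$ with prescribed rational values at chosen rational parameter points $t_j \in [0,1]$. I would build each $\gamma_l$ by piecewise affine interpolation, choosing the $t_j$'s with a common denominator that is a multiple of the denominators of all coordinates of the $x_j$'s, and inserting auxiliary breakpoints when the integer-slope constraint forces this. Then $g_i := f_i \circ \gamma$ lies in $\freeW(1)$ as a composition of pointed $\mathbb{Z}$-maps, and pointwise evaluation yields $t_i(\bar g) \equiv 1$ and, for each $y_j$ with $\gamma(y_j) = x_j$, $s_j(\bar g)(y_j) = s_j(\bar f)(x_j) \neq 1$, so $\bar g$ is the desired witness in $\freeW(1)$.

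The main obstacle is the interpolation step: one must weave a pointed $\mathbb{Z}$-map from $[0,1]$ through any finite set of prescribed rational points in $[0,1]^n$ while preserving the integer-coefficient constraint and keeping the image inside $[0,1]^n$. The remaining parts of the argument are routine semantic transfers using the pointwise structure of free algebras and the geometric characterization of Wajsberg functions in Proposition \ref{prop:pos}.
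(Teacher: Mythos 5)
Your proposal is correct and follows essentially the same route as the paper: one direction via the embedding $\freeW(1)\hookrightarrow\freeW(n)$, and the other by picking rational points witnessing the failed inequalities, threading a pointed $\mathbb{Z}$-map $[0,1]\to[0,1]^n$ through them (fixing $\one$), and composing to transport the witness down to $\freeW(1)$. Your explicit handling of the denominator constraint in the interpolation step is, if anything, slightly more careful than the paper's, which defers that construction to Je\v{r}\'abek's proof or to Schauder hats.
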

\begin{proof}
The statement is equivalent to showing that all free Wajsberg hoops satisfy the same rules $\Sigma \Rightarrow \Delta$. Indeed, every such rule is a universal formula, and vice versa, the validity of a  universal formula is equivalent to the validity of a set of such rules (see the discussion in \cite{Je05}). 

We can show that a rule $\Sigma \Rightarrow \Delta$ is valid in $\freeW(n)$, for any $n \geq 2$, if and only if it is valid in $\freeW(1)$. The left-to-right direction follows from the fact that $\freeW(1)$ is a subalgebra of all $\freeW(n)$, for any $n \geq 2$. 

We now sketch the right-to-left direction. First, one can restrict to considering $\Sigma$ to be one identity $t(x_1, \ldots, x_n) \approx 1$ (\cite[Lemma 3.1]{Ga04}). Let $\Delta = \{u_i(x_1, \ldots, x_n) \approx 1, i = 1\ldots m\}$. Suppose $\Sigma \Rightarrow \Delta$ is not valid in $\freeW(k)$ for some $k \geq 2$. Then there is an assignment $h$ to $\freeW(k)$ such that $h(t) = \top$, where $\top$ is the function constantly equal to $1$, but $h(u_i) < \top$ for all $i= 1 \ldots m$. In particular, there are some rational points ${\bf y_1}, \ldots {\bf y_m} \in [0,1]^k$ where the Wajsberg functions $h(u_i)$ are such that $h(u_i)({\bf y_i})< 1$. 
The idea is to now define a pointed $\mathbb{Z}$-map $f: [0,1] \to [0,1]^k$ such that, if we define a new assignment $k$ of $\{x_1, \ldots, x_n\}$ to $\freeW(1)$ mapping each $x_i$ to $h(x_i) \circ f$, $k$ validates $\Sigma$, but it does not validate any of the identities in $\Delta$.  

Let us consider the previous points ${\bf y_1}, \ldots {\bf y_m} \in [0,1]^k$, and let us name their coordinates as follows: ${\bf y_i} = (y_i^1, \ldots, y_i^k)$.
Then, either following a construction similar to the one in the proof \cite[Theorem 4.1]{Je05} or using Schauder's hats (see \cite{CDM}), one can find points $a_1, \ldots, a_m$ in $[0,1]$, and define $k$ McNaughton functions $f_1, \ldots, f_k \in \freeW(1) \sse \freeMV(1)$ such that, for $i = 1, \ldots, m$, $j = 1, \ldots, k$: $$f_j(a_i) = y_i^j,\;\; f_j(1) = 1.$$
Thus, calling $f$ the $\mathbb{Z}$-map that has as components such Wajsberg functions $f = (f_1, \ldots f_k)$, we have that, for $i = 1,\ldots,m$: $$f(a_i) = (y_i^1, \ldots, y_i^k) = {\bf y_i}$$
Let us now consider the assignment $k$ extending $k(x_i) = h(x_i) \circ f$. Then $k$ validates $\Sigma$ (since $h$ does), while it does not validate any of the identities in $\Delta$, since $k(u_i)(a_i) = h(u_i)({\bf y_i})< 1$ by construction, for $i = 1 \ldots m$.

Thus $k$ is an assignment of $\{x_1, \ldots, x_n\}$ in $\freeW(1)$ which testifies the failure of the validity of $\Sigma \Rightarrow \Delta$ in $\freeW(1)$, and the proof is completed.
\end{proof}
For the following lemma and theorem we adapt some ideas in \cite{Je05,Je10,CMe15}. In particular, we use the fact that by \cite[Theorem 4.17]{Ca15}, a rational polyhedron is strongly regular if and only if it is a finite union of anchored sets, where a set $X \sse \mathbb{R}^n$ is called \emph{anchored} if for some $v_1,\ldots,v_m \in \mathbb{R}^n$, $X = \textrm{conv}(v_1,\ldots,v_m)$ and the affine hull of $X$ intersects $\mathbb{Z}^n$. 

Given a set of identities $\Sigma = \{s_i \approx t_i, i = 1 \ldots n\}$, where $s_i, t_i$ are $n$-variable Wajsberg terms, we will write $\free_{\WH}(n)/ \Sigma$ for the quotient of $\free_{\WH}(n)$ with respect to the congruence generated by $\{(s_i, t_i), i = 1 \ldots n\}$.
\begin{lemma}\label{lemma:maxexact}
Let $\Sigma = \{t(x_1, \ldots, x_n) \approx 1\}$ be an identity in the language of Wajsberg hoops. Then there exists a term $u(x_1, \ldots, x_n)$ over the same variables such that, considering $\Delta = \{u(x_1, \ldots, x_n) \approx 1\}$:
\begin{enumerate}
	\item\label{lemma:maxexact1} $\freeW(n)/\Delta$ is an exact Wajsberg hoop. 
	\item\label{lemma:maxexact2} $\WH \models \Delta \Rightarrow \Sigma$;
	\item\label{lemma:maxexact3} $\Sigma \Rightarrow \Delta$ is admissible in $\WH$.
\end{enumerate}
\end{lemma}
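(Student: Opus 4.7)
The plan is to construct $u$ directly from the geometry of $O_t \subseteq [0,1]^n$ by isolating the maximal sub-polyhedron of $O_t$ that is connected, strongly regular, and pointed. On the algebraic/logical side this will be the largest quotient of $\freeW(n)/t$ that is both exact and a strengthening of $\Sigma$, and the admissibility claim will reduce to the observation that the image of any pointed $\mathbb{Z}$-map into $O_t$ is automatically connected, strongly regular, and pointed, hence already lies inside this largest piece.

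Concretely, I first fix a rational triangulation $\nabla$ of $O_t$ and let $\mathscr{A}$ be the union of those simplices of $\nabla$ whose affine hull meets $\mathbb{Z}^n$. By Cabrer's characterization \cite[Theorem 4.17]{Ca15}, $\mathscr{A}$ is a strongly regular rational polyhedron, and every strongly regular rational sub-polyhedron of $O_t$ sits inside $\mathscr{A}$. Since $\uno \in \mathbb{Z}^n \cap O_t$, the carrier simplex of $\uno$ lies in $\mathscr{A}$, so I can let $\scP_u$ be the connected component of $\mathscr{A}$ containing $\uno$. Being a union of finitely many simplices of $\nabla$, $\scP_u$ is itself a pointed rational polyhedron; it is connected by construction and inherits strong regularity from $\mathscr{A}$. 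Applying Lemma \ref{thm:unirational} I then choose a positive $u \in \freeW(n)$ with $O_u = \scP_u$, and take $\Delta = \{u \approx 1\}$.

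With this $u$ in hand, item (\ref{lemma:maxexact1}) is immediate from Theorem \ref{thm:exact} applied to $\W(\scP_u) \cong \freeW(n)/u$, which is exact because $\scP_u$ is connected and strongly regular. Item (\ref{lemma:maxexact2}) is immediate from Proposition \ref{prop:geomded}: $O_u = \scP_u \sse O_t$ forces $\WH \models \Delta \Rightarrow \Sigma$. For item (\ref{lemma:maxexact3}), by Lemma \ref{lemma:1adm} it suffices to check admissibility against substitutions into $\freeW(k)$ for arbitrary $k$. Such a substitution $\sigma$ unifying $\Sigma$ corresponds, via the duality of Theorem \ref{thm:duality} together with Lemma \ref{lemma:pzmap}, to a pointed $\mathbb{Z}$-map $\zeta\colon [0,1]^k \to [0,1]^n$ whose image lies in $O_t$. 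By Lemma \ref{lemma:subiso} the image $\zeta([0,1]^k)$ is the polyhedron dual to a finitely generated subalgebra of $\freeW(k)$, hence is the dual of an exact Wajsberg hoop; Theorem \ref{thm:exact} then says $\zeta([0,1]^k)$ is itself connected, strongly regular and pointed. Maximality of $\mathscr{A}$ gives $\zeta([0,1]^k) \sse \mathscr{A}$, and connectedness together with $\uno \in \zeta([0,1]^k)$ forces it into the component $\scP_u = O_u$. Therefore $\sigma(u) = 1$, which is exactly admissibility of $\Sigma \Rightarrow \Delta$.

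The main obstacle I expect is the bookkeeping around Cabrer's criterion: I must verify that the connected component of $\mathscr{A}$ through $\uno$ really is a rational polyhedron (this is where the triangulation is essential, since connected components of a polyhedron are unions of simplices in any triangulation) and that strong regularity is inherited by this component (true because a finite union of anchored simplices is still a finite union of anchored simplices). The rest of the argument is a clean translation, through the duality of Theorem \ref{thm:duality}, between substitutions that unify $\Sigma$ and pointed $\mathbb{Z}$-maps into $O_t$, with Theorem \ref{thm:exact} doing the structural heavy lifting on the geometric side.
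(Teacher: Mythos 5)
Your construction of $u$ and your treatment of items (\ref{lemma:maxexact1}) and (\ref{lemma:maxexact2}) coincide with the paper's: both take the connected component through $\uno$ of the union of the anchored pieces of $O_t$ and then invoke Theorem \ref{thm:exact} and Proposition \ref{prop:geomded}. For item (\ref{lemma:maxexact3}) you take a genuinely different route. The paper uses Lemma \ref{lemma:1adm} to reduce to one-variable unifiers, realizes such a unifier dually as a strict pointed $\Z$-map $\zeta\colon[q,1]\to O_t$, subdivides $[q,1]$ into anchored rational subintervals $\mathscr{S}_j$ with $\zeta(\mathscr{S}_j)\sse\scP_{i_j}$, and then uses the cheap observation that a polytope containing an anchored subset is itself anchored (affine hulls are monotone and anchoredness only concerns the affine hull), so that $\zeta([q,1])$ lands in the union of the anchored pieces and, being connected and pointed, in $O_u$. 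You instead keep arbitrary $k$-variable unifiers, observe that the whole image $\zeta([0,1]^k)$ is a connected, pointed, strongly regular subpolyhedron of $O_t$, and appeal to the maximality of $\mathscr{A}$ among strongly regular subpolyhedra of $O_t$. Your version is cleaner and does not really need Lemma \ref{lemma:1adm}, but the maximality claim is the one step that does not follow from \cite[Theorem 4.17]{Ca15} alone, and it is the one obligation you do not list at the end. It is true, but it needs an argument: if $A\sse O_t$ is an anchored polytope and $x\in A$ has carrier simplex $S$ that is not a face of any anchored simplex of $\nabla$, then a neighbourhood of $x$ in $O_t$ lies in $|\mathrm{star}(S)|$, so the convex set $A\cap B(x,\e)$ is covered by the affine hulls of the (all non-anchored) simplices of the star and hence, being convex, lies in a single one of them; since $A$ is convex, $\mathrm{aff}(A)=\mathrm{aff}(A\cap B(x,\e))$ then misses $\Z^n$, contradicting anchoredness. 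With that lemma supplied (or with the paper's affine-hull-monotonicity trick substituted for it), your proof is complete.
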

\begin{proof}
Let $O_{t} \sse [0,1]^n$ be the one-set of the Wajsberg function corresponding to $t$.

	Then $O_{t}$ can be written as a finite union of polytopes $\scP_i$ for $i =1 \ldots m, m \in \mathbb{N}$:
	$$O_{t} = \bigcup_{i= 1 \ldots m}\scP_i.$$
	Intuitively, we want a connected component of $O_{t}$ that is strongly regular and as large as possible. Thus, we restrict to connected unions of strongly regular polytopes, which means by \cite[Theorem 4.17]{Ca15}, finite unions of anchored polytopes. 
	Let then $$I = \{i \in \{1, \ldots, m\}: \scP_i \mbox{ is anchored}\}, \quad \scP = \bigcup_{i \in I}\scP_i.$$
	Let $\mathscr{C}_1 \ldots \mathscr{C}_k$ be the connected components of $\scP$, and let $\mathscr{C}_{i_0}$ be the connected component that contains $\one$, for some $i_0 \in \{1 \ldots k\}$. 
	Notice that $i_0$ exists, with the limit case being the singleton $\{\one\}$.
	Say that $I_0 \sse I$ is such that  $$\mathscr{C}_{i_0} = \bigcup_{i \in I_0} \scP_i.$$
	Since $\mathscr{C}_{i_0}$ is a pointed rational polyhedron by construction, consider a Wajsberg function $f \in \freeW(n)$ whose one-set is $\mathscr{C}_{i_0}$, and let $u(x_1, \ldots, x_n)$ be a Wajsberg term associated to $f$. We now show (1) -- (3).
	
	(1) By definition, $\mathscr{C}_{i_0}$ is connected and it is strongly regular (since it is a finite union of anchored polytopes, \cite[Theorem 4.17]{Ca15}). Thus $\freeW(n)/\Delta$ is an exact Wajsberg hoop by Theorem \ref{thm:exact}.
	
	(2) By Proposition \ref{prop:geomded}, $\WH \models \Delta \Rightarrow \Sigma$ if and only if $O_{u} \sse O_{t}$, which holds by construction.
	
	(3) By Lemma \ref{lemma:1adm} we can restrict to consider substitutions to terms with $1$ variable, say $y$. Let then $\sigma: \term_{\WH}(x_1, \ldots, x_n) \to \term_{\WH}(y)$ be a $\WH$-unifier of $\Sigma$. Let $\bar \sigma: \freeW(n) \to \freeW(y)$ be the homomorphism induced by $\sigma$, that closes the following diagram, where $\mu, \mu'$ are the natural epimorphisms:
$$
\xymatrix{
\term(x_1, \ldots, x_n) \ar[r]^-\sigma \ar[d]_{\mu} & \term(y) \ar[r]^-{\mu'} & \freeW(y)\\
\freeW(n) \ar[urr]_{\bar \sigma}
}
$$
	Since the image of $\bar\sigma$ is a finitely generated subalgebra of $\freeW(y)$, $\bar\sigma(\freeW(n))$ is an exact Wajsberg hoop, isomorphic to a quotient of the kind $\freeW(y)/\scQ$, where $\scQ \sse [0,1]$ is a pointed polyhedron that is connected and strongly regular, that is, it is a segment of the kind $[q,1]$ where $q \in [0,1]$ is a rational point. From $\bar\sigma$, dually we obtain a strict pointed $\Z$-map $\zeta: [q,1] \to [0,1]^n$ (Proposition \ref{prop:dualmorph}), and $\freeW(n)/\zeta([q,1]) \cong \freeW(y)/[q,1]$. Since $\sigma$ unifies $\Sigma$, $\Sigma \sse ker\bar \sigma$ (\cite[Lemma 2.4]{CMe15}), and thus $\zeta([q,1]) \sse O_t$. Moreover, we can divide $[q,1]$ in subintervals with rational endpoints $\mathscr{S}_1, \ldots \mathscr{S}_k$ such that $\zeta(\mathscr{S}_j) \sse \scP_{i_j}$, for $i_j \in \{1, \ldots m\}$. Indeed, since each $\scP_i \cap \zeta([q,1])$ is a rational polyhedron (\cite[Corollary 2.12]{Mundici11}), each connected component is the image via $\zeta$ of a subinterval with rational endpoints.
	
	Since each $\mathscr{S}_j$ is anchored, so is each $\scP_{i_j}$. Thus, $\zeta[q,1] \sse \scP$, and since it is connected (being the image of a connected interval via a continuous map), and since $\zeta$ is pointed, this implies that $\zeta[q,1] \sse \mathscr{C}_{i_0} = O_{u}$. Since $\zeta$ is the dual of the substitution map acting on the free algebra $\freeW(n)$, this implies that $\Delta \sse ker \bar\sigma$, and thus $\sigma$ also unifies $u$. Therefore, $\Sigma \Rightarrow \Delta$ is admissible in $\WH$.
\end{proof}

\begin{theorem}\label{thm:admissible}
	The exact type of Wajsberg hoops, and of the positive fragment of \luk\ logic, is unitary. 
\end{theorem}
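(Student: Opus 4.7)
The plan is to extract from Lemma \ref{lemma:maxexact} a most general coexact unifier for every unification problem in $\WH$, from which unitariness follows at once; the corresponding statement for $\lu$ then transfers by algebraizability.

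First, I would reduce an arbitrary unification problem to a single identity of the form $t \approx 1$. Any finitely presented Wajsberg hoop is a quotient $\freeW(n)/\Sigma$ for a finite $\Sigma$; since $\WH$ is a variety of commutative integral residuated lattices, each $s \approx r$ in $\Sigma$ can be rewritten equivalently as $(s \to r) \land (r \to s) \approx 1$, and the congruence filter generated by the resulting elements $s_1, \ldots, s_k$ coincides with the principal filter generated by their product. Thus $\freeW(n)/\Sigma \cong \freeW(n)/\{t \approx 1\}$ for a single Wajsberg term $t \in \freeW(n)$, and I may assume $\Sigma = \{t \approx 1\}$ throughout, writing $\alg A = \freeW(n)/\Sigma$.

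Next, applying Lemma \ref{lemma:maxexact} to $\Sigma$ yields a term $u$ such that $\Delta = \{u \approx 1\}$ satisfies: (i) $\freeW(n)/\Delta$ is exact, (ii) $\WH \models \Delta \Rightarrow \Sigma$, and (iii) $\Sigma \Rightarrow \Delta$ is $\WH$-admissible. By (ii), $\Sigma$ holds in $\freeW(n)/\Delta$, so the quotient map $\pi_\Delta : \freeW(n) \to \freeW(n)/\Delta$ factors as $\pi_\Delta = u_\Delta \circ \pi_\Sigma$ for a unique homomorphism $u_\Delta : \alg A \to \freeW(n)/\Delta$; by (i), $u_\Delta$ is a coexact unifier of $\alg A$.

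The key step is to show that $u_\Delta$ is maximum in the preorder of coexact unifiers. Let $v : \alg A \to \alg E$ be an arbitrary coexact unifier, with $\alg E$ exact, and fix an embedding $\iota : \alg E \hookrightarrow \freeW(m)$. The composition $\sigma = \iota \circ v \circ \pi_\Sigma : \freeW(n) \to \freeW(m)$ is a substitution unifying $t$, since $v(\pi_\Sigma(t)) = v(1) = 1$; admissibility (iii) forces $\sigma(u) = 1$, and injectivity of $\iota$ then gives $v(\pi_\Sigma(u)) = 1$. Hence $v \circ \pi_\Sigma$ factors through $\freeW(n)/\Delta$, producing $h : \freeW(n)/\Delta \to \alg E$ with $h \circ \pi_\Delta = v \circ \pi_\Sigma$; since $\pi_\Sigma$ is surjective and $u_\Delta \circ \pi_\Sigma = \pi_\Delta$, one concludes $h \circ u_\Delta = v$. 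So every coexact unifier of $\alg A$ factors through $u_\Delta$, proving unitariness.

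The main obstacle has already been absorbed into Lemma \ref{lemma:maxexact}, whose geometric core (isolating the strongly regular connected component of $O_t$ containing $\uno$) simultaneously delivers exactness, the implication $\Delta \Rightarrow \Sigma$, and admissibility of $\Sigma \Rightarrow \Delta$. The only subtle point that remains in the diagram chase is that admissibility, although formulated in terms of substitutions into finitely generated free Wajsberg hoops, is nonetheless strong enough to control homomorphisms into \emph{arbitrary} exact codomains—precisely because, by definition, every exact Wajsberg hoop embeds into some $\freeW(m)$.
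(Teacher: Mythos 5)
Your proposal is correct and follows essentially the same route as the paper: reduce to a single identity $\Sigma=\{t\approx 1\}$, take the coexact unifier onto $\freeW(n)/\Delta$ supplied by Lemma \ref{lemma:maxexact}, and use admissibility of $\Sigma\Rightarrow\Delta$ together with the embedding of an arbitrary exact codomain into a free algebra to factor every coexact unifier through it. The only (harmless) differences are that you spell out the reduction of a finite set of identities to a principal one, which the paper leaves as ``without loss of generality,'' and that you embed into $\freeW(m)$ rather than $\freeW(\omega)$.
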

\begin{proof}
	Without loss of generality, let us consider a finitely presented Wajsberg hoop of the kind $\alg A = \freeW(n)/\Sigma$, where $\Sigma = \{t(x_1, \ldots, x_n) \approx 1\}$. Let then $\alg B = \freeW(n)/\Delta$ be the exact Wajsberg hoop given by Lemma \ref{lemma:maxexact}, and let $e: \alg A \to \alg B$ be the dual of the inclusion map from the polyhedron associated to $\alg B$ to the polyhedron associated to $\alg A$ (following the notation of Lemma \ref{lemma:maxexact}, from $\mathscr{C}_{i_0}$ to $O_t$). Then $e$ is a coexact unifier, since it is an homomorphism onto an exact Wajsberg hoop by Lemma \ref{lemma:maxexact} via the duality and Proposition \ref{prop:dualmorph}. We claim that $e$ is maximal in the set of coexact unifiers for $\alg A$. This follows from the admissibility of $\Sigma \Rightarrow \Delta$, Lemma \ref{lemma:maxexact}(\ref{lemma:maxexact3}). Indeed, suppose there is a coexact unifier $u: \alg A \to \alg C$, for some exact Wajsberg hoop $\alg C$ that embeds in $\freeW(\omega)$ via a homomorphism $i$. This, as outlined in the proof of \cite[Theorem 3.6]{CMe15}), gives a unifier for $\Sigma$. Indeed, consider the natural epimorphisms $\mu_1: \freeW(n) \to \alg A = \freeW(n)/\Sigma, \mu_2: \freeW(n) \to \alg B = \freeW(n)/\Delta, \mu: \term_{\WH}(n) \to \freeW(n), \mu_{\omega}: \term_{\WH}(\omega) \to \freeW(\omega)$, and let $\bar \sigma = i \circ u \circ \mu_1$. The substitution is then defined as $\sigma(x_i) = t$ if $\mu_{\omega}(t) = \bar \sigma \circ \mu(x_i)$, thus $\sigma$ unifies $\Sigma$ by construction. Hence, it also unifies $\Delta$, and then $\Delta \sse ker \bar\sigma = ker(u \circ \mu_1)$ since $i$ is injective. Thus there exists an homomorphism $w: \alg B \to \alg C$ that closes the following diagram: 
	$$
\xymatrix{
\freeW(n) \ar[r]^-{\mu_1} \ar[dr]_{\mu_2} \ar@/^2pc/[rrr]^{\bar\sigma}& \alg A \ar[r]^{u} \ar[d]^e& \alg C \ar[r]^-i & \freeW(\omega)\\
&\alg B \ar[ur]_w
}
$$
	That is to say, such that $w \circ \mu_2 = u \circ \mu_1$. Moreover, since $e$ is the dual of the inclusion map, $e \circ \mu_1 = \mu_2$, thus $w \circ e = u$, that is, $u \leq e$ in the preorder of coexact unifiers. Therefore, $e$ is maximal, and since $\alg A$ was an arbitrary finitely presented Wajsberg hoop, the exact type of $\WH$ is unitary.
\end{proof}
Since the proof of Lemma \ref{lemma:maxexact} also shows a procedure to find the maximal coexact unifier of a finitely presented Wajsberg hoop, we get the following consequence. 
\begin{corollary}
	The set of admissible rules is decidable in $\WH$ (and in the positive fragment of \luk\ logic).
\end{corollary}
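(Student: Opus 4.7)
The plan is to derive decidability directly from Theorem \ref{thm:admissible} together with the effectiveness of the construction in Lemma \ref{lemma:maxexact}. Fix a candidate rule $\Sigma \Rightarrow \Delta$. By standard reductions (and appealing to Lemma \ref{lemma:1adm} to restrict to substitutions in one variable when needed), admissibility is preserved under reformulating the premises and conclusions, so we may assume $\Sigma = \{t(x_1,\ldots,x_n) \approx 1\}$ and $\Delta = \{s_1 \approx 1,\ldots,s_k \approx 1\}$. Since the exact type of $\WH$ is unitary, by the algebraic approach of \cite{CMe15} admissibility of $\Sigma \Rightarrow \Delta$ is equivalent to the statement that the unique (up to equivalence) maximal coexact unifier $e$ of $\Sigma$ trivializes at least one $s_i$; unfolding the construction of $e$ from Lemma \ref{lemma:maxexact}, this happens exactly when $\WH \models u \approx 1 \Rightarrow s_i \approx 1$ for some $i$, where $u$ is the Wajsberg term produced in that lemma.

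The proposed decision procedure is therefore: (i) from $t$, compute $u$; (ii) for each $i \in \{1,\ldots,k\}$, test whether the quasiequation $u \approx 1 \Rightarrow s_i \approx 1$ is valid in $\WH$. Step (ii) is decidable: by Proposition \ref{prop:geomded} it reduces to the inclusion of rational polyhedra $O_u \sse O_{s_i}$ in $[0,1]^n$, equivalently to validity in the standard algebra $[0,1]_{\WH}$, which is decidable by the McNaughton-style algorithm. For step (i), we follow the recipe inside the proof of Lemma \ref{lemma:maxexact}: compute a rational triangulation of $O_t \sse [0,1]^n$ from a McNaughton representation of $t$; test each maximal simplex for anchoredness by checking whether its affine hull meets $\Z^n$, which is a linear-arithmetic condition over $\mathbb{Q}$; build the graph of incidences among the anchored simplices; extract the connected component $\mathscr{C}_{i_0}$ containing $\uno$ by graph reachability; and finally assemble from a regular subdivision of $\mathscr{C}_{i_0}$ a McNaughton function whose one-set is $\mathscr{C}_{i_0}$ using Schauder hats, yielding the required term $u$.

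The only genuinely non-routine step is verifying that $u$ can actually be read off effectively from $\mathscr{C}_{i_0}$; this is the main obstacle and is handled by invoking the effective form of \cite[Corollary 2.10]{Mundici11}, which gives an explicit Schauder-hat construction producing a piecewise linear function with integer coefficients whose one-set is any prescribed computable rational polyhedron. Composing (i) and (ii) yields a uniform decision procedure for admissibility of rules in $\WH$, and by algebraizability the same procedure decides admissibility in the positive fragment $\lu$ of \luk\ logic.
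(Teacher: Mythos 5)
Your proposal is correct and follows essentially the same route as the paper, which simply observes that the construction of the maximal coexact unifier in Lemma \ref{lemma:maxexact} is effective and combines this with Theorem \ref{thm:admissible} and the decidability of quasiequational validity via Proposition \ref{prop:geomded}. You merely make explicit the effectiveness details (triangulation of $O_t$, the anchoredness test, extraction of the component containing $\uno$, and the Schauder-hat construction of $u$) that the paper leaves implicit.
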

We stress that, while deducibility of Wajsberg hoops reduces to deducibility in MV-algebras (Proposition \ref{prop:geomded}), this is not true for admissibility. If we consider a 
rule $\Sigma \Rightarrow \Delta$ in the language of the positive reduct that is admissible in $\MV$, this is also admissible in $\WH$, as a consequence of Proposition \ref{prop:geomded}. 
However, the opposite does not hold, as shown by the following example of a rule that is admissible in $\WH$ but not in $\MV$.
\begin{example}
	Consider the quasiequation $(2x \to x \approx 1) \Rightarrow (2x \approx 1)$, where $2x$ is defined as in Example \ref{ex:unif}. Then $\MV \models 2x \to x \approx x \lor \neg x$. The quasiequation is not valid in $\WH$ (nor in $\MV$, equivalently), since $O_{2x \to x} = \{0, 1\} \not\sse O_{2x} = [1/2,1]$ as shown in Figure \ref{fig:exadm}. Moreover, it is not admissible in $\MV$. Indeed, consider the substitution $\sigma(x) = 0$. Clearly $\sigma$ unifies $x \lor \neg x \approx 1$, since $\MV \models 0 \lor \neg 0 \approx 1$, but it does not unify $2x \approx 1$, since $\MV \models 0 \oplus 0 \approx 0$. However, $(2x \to x \approx 1) \Rightarrow (2x \approx 1)$ is admissible in $\WH$. Indeed, as shown by Figure \ref{fig:exadm}, the maximal coexact unifier for $2x \to x \approx 1$ is represented geometrically by the singleton $\{1\}$, which represents the homomorphism onto the trivial algebra. In fact, the only one-variable unifier for $2x \to x \approx 1$ in $\WH$ is (the equivalence class of) the substitution $\sigma(x) = 1$, which unifies any identity written with positive terms.
	\begin{figure}[h]
\begin{center}
\begin{tikzpicture}
\draw [line width = 1pt] (0,0) -- (0,4) -- (4,4) -- (4,0)--(0,0);
\draw [line width = 1.5pt, gray, densely dashed] (0,0) -- (2,3.96) --(4,3.96);
\draw [line width = 1.5pt, black!70, densely dotted] (0, 4) -- (2,2) -- (4,4);
\node at (0.8, 1) {$g$};
\node at (3, 2.5) {$f$};
\end{tikzpicture}
\end{center}
\caption{The functions $f(x) = 2x \to x,\; g(x) = 2x$ in $\freeW(1)$.}
\label{fig:exadm}
\end{figure}

\end{example}

\section{Conclusions and future work}
The duality in terms of rational polyhedra we presented in this manuscript shows to be fruitful to study logical properties of the positive fragment of \luk\ logic, in the same fashion as in the MV-algebraic case. Via this representation, we derived some analogies and differences between \luk\ logic and its positive fragment. 

The duality we presented here is meant to have a clear algebraic interpretation, i.e., the pointed polyhedra are exactly the one-sets of the functions in the free Wajsberg hoop, and finitely presented Wajsberg hoops are the algebras of Wajsberg functions restricted to pointed polyhedra. However, a more general approach could be developed, in the lines of \cite{MarraSpada12,MarraSpada13}. Moreover, the connections with the categorical equivalence in \cite{GT05} in the more general setting shall be investigated.

Given the representation of free BL-algebras (and basic hoops) in \cite{AB10}, the insight given by this work could also be useful to study finitely presented BL-algebras and basic hoops from a geometrical point of view.

\section{Acknowledgements}
The author wishes to thank Paolo Aglian\`o, Tommaso Flaminio, and Vincenzo Marra for their insightful and helpful comments on a previous version of this work. 
	This work has received funding from the European Union's Horizon 2020 research and innovation programme with a Marie Sk\l odowska-Curie grant [890616 to S.U.].
\providecommand{\bysame}{\leavevmode\hbox to3em{\hrulefill}\thinspace}
\providecommand{\MR}{\relax\ifhmode\unskip\space\fi MR }
\providecommand{\MRhref}[2]{%
  \href{http://www.ams.org/mathscinet-getitem?mr=#1}{#2}
}
\providecommand{\href}[2]{#2}

\end{document}